\theoremstyle{definition}    
\newtheorem*{Example}{Example} 
\newtheorem*{Rem}{Remark}
\newcommand{\Lin}{\mathop{\mathcal Lin}\nolimits}
\newcommand{\GCD}{\operatorname{GCD}}
\newcommand{\clos}{\operatorname{clos}}
\newcommand{\Span}{\operatorname{span}}
\newcommand{\Ker}{\operatorname{Ker}}
\newcommand{\loc}{\mathrm{loc}}
\newcommand{\odd}{\mathrm{odd}}
\newcommand{\supp}{\mathop{\mathrm{supp}}}
\newcommand{\Mult}{\mathit{Mult}}
\newcommand{\one}{\mathbf 1}
\newcommand{\No}{N$^\circ$}
\newcommand{\BB}{\mathbb{B}}
\newcommand{\CC}{\mathbb{C}}
\newcommand{\DD}{\mathbb{D}}
\newcommand{\NN}{\mathbb{N}}
\newcommand{\QQ}{\mathbb{Q}}
\newcommand{\RR}{\mathbb{R}}
\newcommand{\TT}{\mathbb{T}}
\newcommand{\ZZ}{\mathbb{Z}}
\newcommand{\fD}{\mathfrak{D}}
\newskip\thmsep
\newtheoremstyle{mystyle}
    {\topsep}
    {\topsep}
    {\it}
    {\parindent}
    {\bfseries}
    { }
    { }
    {\thmnumber{#2. }\thmname{#1}\thmnote{ #3}.}
\theoremstyle{mystyle}
\newtheorem{Lemma}[subsection]{Lemma}
\newtheorem{Proposition}[subsection]{Proposition}
\newtheorem*{Claim}{Claim}
\newtoks\thehProclaim
\newtheorem{Proclaim}[subsection]{\the\thehProclaim}
\newenvironment{proclaim}[1]{\thehProclaim{#1}\begin{Proclaim}}{\end{Proclaim}}
\newtheoremstyle{myremstyle}
    {\topsep}
    {\topsep}
    {\rm}
    {\parindent}
    {\bfseries}
    { }
    { }
    {\thmnumber{#2. }\thmname{#1}\thmnote{ #3}.}
\theoremstyle{myremstyle}
\newtheorem{Remark}[subsection]{Remark}
\newtheorem{Comments}[subsection]{Comments}
\newtoks\thehSubS
\newtheorem{SubS}[subsection]{\the\thehSubS}
\newenvironment{Subsection}[1]{\thehSubS{#1}\begin{SubS}}{\end{SubS}}
\let\bull=\square
\begin{document}

\title[On the Cyclicity of Dilated Systems in Lattices]%
{On the Cyclicity of Dilated Systems in Lattices:
Multiplicative Sequences, Polynomials, Dirichlet-type Spaces 
and Algebras}

\author{Nikolai Nikolski} 

\address{Universit\'e de Bordeaux, Institut de Math\'ematiques}

\email{nikolski@math.u-bordeaux.fr}

\dedicatory{To the memory of Mohamed Zarrabi, an unforgettable colleague 
and remarkable man}

\begin{abstract}
The aim of these notes is to discuss the completeness of the dilated systems in a most general framework of an arbitrary sequence lattice $X$, including weighted $\ell^p$ spaces. In particular, general multiplicative and completely multiplicative sequences are treated. After the Fourier--Bohr transformation, we deal with the cyclicity property in function spaces on the corresponding infinite dimensional Reinhardt domain $\DD^\infty_{X'}$. Functions with (weakly) dominating free term and (in particular) linearly factorable functions are considered. The most attention is paid to the cases of the polydiscs $\DD^\infty_{X'}|\CC^N=\DD^N$ and the $\ell^p$-unit balls $\DD^\infty_{X'}|\CC^N=\BB_p^N$, in particular to Dirichlet-type and Dirichlet--Drury--Arveson-type spaces and algebras, as $X=\ell^p(\ZZ_+^N,(1+\alpha)^s)$, $s=(s_1,s_2,\dots)$ and $X=\ell^p(\ZZ_+^N,(\frac{\alpha!}{|\alpha|!})^t(1+|\alpha|)^s)$, $s,t\geq 0$, as well as to their infinite variables analogues. We privileged the largest possible scale of spaces and the most elementary instruments used. 
\end{abstract} 

\maketitle

\tableofcontents

 
\section{Introduction. 10th proof of a Haar Lemma}
 
Given a Banach space of complex sequences $ X$, the \emph{dilation 
completeness problem} (DCP) consists in a description of \emph{cyclic 
elements $x=(x_{k})_{k\geq 1}\in X$ for the dilation semigroup 
$(D_{n})_{n\geq 1}$}, i.e. $x\in X$ such that $E_{x}= X$ 
where 
$$
E_{x}=: \Span_\sigma \big(D_{n}x :  n=1,2,3,\dots \big) .
$$
 
Below, we often suppose that \emph{the space $X$ is an {\bf ``ISL''} --- 
(Dilation) Invariant Sequence Lattice}, i.e. a Banach space of complex 
sequences such that:
 
{\it (i)} 
with every $x\in X$ it contains all sequences $y=(y_n)_{n\geq 1}$ having 
$|y_n|\leq |x_n|$, $\forall n\in\NN$ and $\|y\|_X\leq \|x\|_X$, 
 
{\it (ii) the dilations $D_n$}
$$
D_n\left(\sum_k x_k\delta_k\right) := \sum_k x_k\delta_{nk}
$$
are well defined on $X$, $x\in X \Rightarrow D_n x\in X, \forall n\geq 1$, 
and
 
{\it (iii) $\delta_1 := (1,0,0,\dots)\in X$} and so all \emph{standard unit 
vectors $\delta_k$ (and their linear hull $X_{00}$),}
$$
\delta_k = (\delta_{k,j})_{j\geq 1} \quad (\delta_{k,j} 
\text{ being the Kronecker delta}),
$$
are contained in $X$. 
 
We distinguish \emph{``normalized''} {\bf ISL} (or {\bf N-ISL}) by the 
property $\|\delta_k\|_X = 1$, $\forall k$. In this case, 
$$ 
\ell^1(\NN)\subset X\subset \ell^\infty(\NN),
$$ 
with the corresponding inequalities for the norms. 
 
The notation $\Span_\sigma (\dots)$ means \emph{the closed linear span of 
a set} of vectors $(\dots)$ \emph{in $X$}, closed for the \emph{weak 
topology $\sigma(X,X')$} induced by the \emph{dual lattice $X'$} with 
respect to the \emph{standard duality}
$$ 
\langle x,y\rangle = \sum_{n\geq 1}x_n y_n ,
$$
where $X'$ stands for 
$$
X' := \left\{ y= (y_{n}) : \sum_{n\geq 1} |x_ny_n| < \infty, \forall x\in X \right\} .
$$ 
Notice that always $X'\subset X^*$ (the Banach dual space), and if the set 
$X_{00}$ of all finitely supported sequences (the linear span of 
$\delta_k$, $k=1,2,\dots$) is dense in $X$, then $X'= X^*$. Clearly, if 
$X'=X^*$ then $\Span_\sigma = \Span_X$, the norm closed linear hull in 
$X$. In any case, the dual norm on $X'$ is defined by 
$\|y\|_{X^*}=\sup\{|\langle x,y\rangle| : \|x\|_X\leq 1\}$. 
 
As examples of {\bf N-ISL}, we can mention the \emph{rearrangement 
invariant sequence spaces}, {\bf RIS} (see details below, as well as 
\cite{LiT1979}), and in particular, $\ell^p(\NN)$ spaces 
($1\leq p\leq\infty$), Orlicz, Lorentz, Marcinkiewicz sequence spaces $X$. 
As {\bf ISL}'s, one can consider some weighted lattices $X_w$ ($x\in X_w$ 
iff $(x_n w_n)\in X$), see below. Quite recently, there were discovered 
many interesting facts on the DCP related to Dirichlet series and the 
analytic number theory, mostly on the Hilbert space $X=\ell^2(\NN)$, see 
for example \cite{HLS1997,Que2015,Nik2012,Nik2019,DGu2020}, as well as the 
references in Sections 4.4, 5.4, 6.8, 6.10 and 9 below.
 
{\bf Acknowledgements.} 
The author is mostly grateful to several colleagues who kindly regarded 
preliminary versions of the manuscript and made very valuable remarks: 
Evgueny Abakumov (Marne-la-Vall\'ee), Alexander Borichev (Aix-Marseille), 
Herv\'e Queff\'elec (Lille), and Rachid Zarouf (Aix-Marseille). In 
particular, A.~Borichev suggested a ``generalized Vandermond's identity'' 
for the proof of Lemma 8.2, and H.~Queff\'elec helped with an attribution 
of A.~Haar's Lemma. 
 
\centerline{\bf *{*}*}
 
Later on, we return to the general setting described above, but just now, 
consider the DCP in the space $X=\ell^\infty(\NN)$ of all bounded complex 
sequences, with the usual $\sup$ norm. The following statement is our 
departure point for this paper.
 
\begin{proclaim}{Haar's Lemma}
If $a_k\in\CC$, $\sum_{k\geq 1} |a_k|<\infty$ 
and $\sum_{k\geq 1}a_{nk}=0$ ($\forall n\geq 1$), 
then $a_j=0$, $\forall j\geq 1$.
\end{proclaim}
 
It seems that the lemma first appeared in G.~Polya and G.~Szeg\H{o} problem 
book \cite{PSz1925}, Part I, \No 129 with a (rather involved) proof 
attributed to Alfred Haar (without detailed references). Much later, in 
\cite{Que2015}, p.~294, the proposition is referred to as just ``a simple 
classical lemma'' and is proved making use some specific procedures 
(related to the sieve of Eratosthenes and the classical M\"obius inversion 
formula --- see below). In \cite{Que2015}, the lemma is then used for the 
proof of a A.~Wintner result quoted in Remark 3.2 below.

Clearly, by the duality, Lemma 1.1 means that \emph{the constant sequence
$$ 
\one = (1,1,\dots)
$$
is a $(D_n)$-cyclic vector in $\ell^\infty(\NN)$ for the 
weak-* topology $\sigma(\ell^\infty,\ell^1)= \sigma(\ell^\infty,(\ell^\infty)')$}.

In fact, historically, similar cyclicity properties in the space $\ell^2$ 
for more general \emph{totally (or, completely) multiplicative sequences 
$x$}, 
$$
x_{nk} = x_n x_ k \quad (\forall n,k \in \NN)
$$
were (re)discovered many times, under various hypotheses and forms, and 
usually with different proofs. I found 9 such proofs in the literature, 
starting from the initial Haar's lemma, and then in A.~Wintner 
\cite{Win1944}, N.~P.~Romanov \cite{Rom1946}, N.~I.~Ahiezer \cite{Ahi1947}, 
P.~Hartman \cite{Har1947} (the most complete treatment), V.~Ya.~Kozlov 
\cite{Koz1948b}, H.~Hedenmalm--P.~Lindquist--K.~Seip \cite{HLS1997}, 
N.~Nikolski \cite{Nik2012} (with \cite{Nik2018}), H.~Queff\'elec 
\cite{Que2015}. Below, a tenth proof of that fact is given in much more 
general framework, surely the shortest one and completely elementary. 
In subsequent sections, we similarly prove some more general statements. 

Note that a lattice $X$ always contains the set $X_{00}$ of finitely 
supported sequences $x = \sum_k x_k\delta_k$, i.e. finite linear 
combinations of the standard unit vectors $\delta_k$. The set $X_{00}$ is 
dense in $X$ for $\sigma (X,X')$ topology.
 
\begin{proclaim}{Extended Haar's Lemma}
Let $x = (x_k)_{k\geq 1}$, $x_1=1$, be a bounded (complex) totally 
multiplicative sequence. If $\sum_{k\geq 1}|a_k| < \infty$ and 
$$
\sum_{k\geq 1} x_k a_{nk}= 0 \quad (\forall n\geq 1), \text{ then } 
a_j=0, \ \forall j\geq 1.
$$
\end{proclaim}

\begin{proof} 
Let $p_1=2$, $p_2=3$, \dots, $p_s$, \dots{} be all integer primes. Setting 
$x(0)=x$, define $x(s)$, $s=1,2,\dots$, recurrently as 
$$ 
x(s)= (id-x_{p_s}D_{p_s})x(s-1), \quad s=1,\dots
$$ 
Obviously, $x(s)\in E_x$ for every $s= 0,1,\dots$ By multiplicativity, all 
coordinates of $x(1)$ with indices in $2\NN$ are zero, whereas all 
others are the same as for $x(0)$, and next, all coordinates of $x(2)$ 
with indices in $3\NN$ are zero, whereas all others are the same as 
for $x(1)$, etc. 

Since (clearly) $\|x(s)\|_\infty\leq \|x(s-1)\|_\infty$ and all 
coordinates of $x(s)$ with indices between $2$ and $p_s$ are zero, we 
obtain $(\sigma^*)\lim_{s\to\infty}x(s)=\delta_1$, where 
$\delta_1=(1,0,0,\dots)$ and $\sigma^*=\sigma(\ell^\infty,\ell^1)$ stands for 
the $*$-weak topology. 

It follows that $\delta_1\in E_x$, and then 
$\delta_n=D(n)\delta_1\in E_x$, $\forall n$. Since $E_x\perp a$ (in the 
sense of the introduced duality), $a=0$.
\end{proof}

Below, we extend Lemma 1.2 to general {\bf ISL}s and to (partially) 
multiplicative sequences, and list some more observations on the DCP in 
the $D_n$-invariant lattices ({\bf ISL}).  
 
\section{Invariant sequence lattices (ISL)}
 
As it was mentioned, \emph{in this paper, the spaces} where the semigroup 
$(D_n)_{n\in \NN}$ acts, are the $D_n$-\emph{invariant sequence lattices} 
{\bf (ISL)} defined in points (i)--(iii) of Section 1 above. If in 
addition, the norm in $ X$ is permutation invariant, 
$$ 
\| (x_n) \| = \| (x_{\sigma(n)}) \| 
\quad (\text{for every bijection } \sigma: \NN\to\NN),
$$
we speak on a \emph{rearrangement invariant space $X$} {\bf (RIS)}. 
The classical examples of {\bf RIS} are $X=\ell^p=\ell^p(\NN)$, as well as 
the Orlicz and Lorentz sequence spaces $X$ (see \cite{LiT1979}). 

Clearly, a {\bf RIS} is a {\bf N-ISL}, a normalized {\bf ISL} as it is 
defined in Section 1. As further examples of {\bf ISL}, one can consider 
the corresponding weighted spaces: given a {\bf N-ISL} $X$, let 
$$ 
X_w = \{ (x_n) : (x_n w_n) \in X \},
$$
where $ (w_n)$ is a positive weight satisfying 
$$ 
\sup_{k\geq 1} \frac{w_{nk}}{w_k} < \infty, \quad \forall n\in\NN.
$$
Besides its own norm topology, we consider on $X$ a ``weakened topology'' 
$\sigma (X,X')$; it can be different from the standard weak topology 
$\sigma (X,X^{*})$, especially in the case where $X'$ is rather a predual 
space than the dual one $X^*$ (as for $X=\ell^\infty(\NN)$). Notice that 
the subspace $X_{00}$ is always $\sigma(X,X')$ dense in $X$ (indeed, if 
$y\in X'$ and $\langle\delta_n,y\rangle = 0$, $\forall n$ then $y=0$).  

We are using the following obvious coordinate-wise meaning of the 
$X'$-weak convergence. 
 
\begin{Lemma}
Let $X$ be an {\bf ISL}, and $\sigma = \sigma(X,X')$. 

(1) (Coordinate dominated convergence) If $x,x^{(n)}\in X$ and if 
$|x^{(n)}_j|\leq|x_j|$, $\forall n,j$ and $\lim_n x^{(n)}_j=0$, 
$\forall j$, then $(\sigma)\lim_n x^{(n)}=0$. 

(2) Assume that the finitely supported sequences $X'_{00}$ are norm dense 
in $X'$ (for example, $X_{00}$ is dense in $X$ and $X$ is reflexive), and 
let $(x^{(n)})_{n\geq 1}$ be a sequence in $X$. Then, 
$(\sigma)\lim_n x^{(n)}=x$ if and only if $\sup_n\| x^{(n)}\|_X < \infty$ 
and $\lim_n x_j^{(n)}=x_j$, $\forall j\geq 1$. 
\end{Lemma}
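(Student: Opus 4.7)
For part (1), the strategy is simply to apply Lebesgue's dominated convergence theorem with counting measure to each fixed test vector. Fix any $y\in X'$; by the very definition of the K\"othe dual, $(|x_j y_j|)_j\in\ell^1(\NN)$, and the hypothesis $|x^{(n)}_j|\leq|x_j|$ provides the pointwise domination $|x^{(n)}_j y_j|\leq|x_j y_j|$. Combined with the coordinate convergence $x^{(n)}_j y_j\to 0$, dominated convergence yields $\langle x^{(n)},y\rangle=\sum_j x^{(n)}_j y_j\to 0$. Since $y\in X'$ was arbitrary, $(\sigma)\lim x^{(n)}=0$. Observe that no density hypothesis is needed here; property (i) and the definition of $X'$ do all the work.

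For part (2), direction $(\Rightarrow)$, the coordinate convergence $x^{(n)}_k\to x_k$ is immediate by testing the $\sigma(X,X')$-convergence against $\delta_k\in X'$ (each $\delta_k$ belongs to $X'$ since $\sum_j|z_j\delta_{k,j}|=|z_k|<\infty$ for every $z\in X$). For the norm bound, I would view each $x^{(n)}$ as a continuous linear functional on the Banach space $(X',\|\cdot\|_{X^*})$ of operator norm at most $\|x^{(n)}\|_X$, and invoke Banach--Steinhaus: pointwise convergence of $\bigl(\langle x^{(n)},y\rangle\bigr)_n$ for every $y\in X'$ forces pointwise, hence uniform, boundedness of these functionals on $X'$. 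The density of $X'_{00}$ in $X'$ then promotes this operator-norm bound to a bound on $\|x^{(n)}\|_X$ itself, via the identity
$$
\|z\|_X=\sup\bigl\{|\langle z,y\rangle|:y\in X'_{00},\ \|y\|_{X^*}\leq 1\bigr\}\quad(z\in X),
$$
which is obtained by truncating $z$ through property (i) and applying finite-dimensional duality on $\Span(\delta_1,\dots,\delta_N)$; in the reflexive ``for example'' case it collapses to the identity $(X')^{*}=X^{**}=X$.

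For direction $(\Leftarrow)$ of (2), the plan is the familiar ``uniformly bounded plus dense'' $\eps$-argument. Set $M:=\sup_n\|x^{(n)}\|_X<\infty$. Given $y\in X'$ and $\eps>0$, use density to pick $y'\in X'_{00}$ with $\|y-y'\|_{X^*}<\eps$ and split
$$
|\langle x^{(n)}-x,y\rangle|\leq|\langle x^{(n)}-x,y'\rangle|+(M+\|x\|_X)\eps .
$$
The first summand is a \emph{finite} sum over $\supp y'$ and tends to $0$ by coordinate convergence; letting $n\to\infty$ and then $\eps\to 0$ completes the argument.

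The one genuinely delicate step is the norm-identification used in $(\Rightarrow)$: the passage from boundedness of $(x^{(n)})$ as functionals on $X'$ to boundedness in $\|\cdot\|_X$. This is precisely where the density of $X'_{00}$ in $X'$ is essential and where I expect most of the care to go; in the reflexive example it is automatic, and in the general ISL setting one checks it via the truncation/finite-dimensional duality step sketched above.
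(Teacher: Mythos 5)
Your part (1) is precisely the argument the paper indicates (its entire proof of the lemma is the remark that both claims are classical exercises, the first being dominated convergence applied to $\sum_j x^{(n)}_j y_j$), and your $(\Leftarrow)$ half of (2), as well as the coordinate-convergence half of $(\Rightarrow)$, are correct and standard. The step you yourself flag as delicate is, however, where a genuine gap sits. The identity
$$
\|z\|_X=\sup\bigl\{|\langle z,y\rangle|:y\in X'_{00},\ \|y\|_{X^*}\leq 1\bigr\}
$$
does not follow from the hypotheses of (2) for a general {\bf ISL}. Your truncation argument is sound as far as it goes: by solidity the dual norm of a finitely supported $y$ is computed on the corresponding finite-dimensional section, so finite-dimensional duality yields $\sup_N\|z^{(N)}\|_X=\sup\{|\langle z,y\rangle|:y\in X'_{00},\ \|y\|_{X^*}\leq 1\}$ for the truncations $z^{(N)}=\sum_{k\leq N}z_k\delta_k$. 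But the remaining equality $\sup_N\|z^{(N)}\|_X=\|z\|_X$ is a Fatou-type property of the norm that can fail. For instance, $\|x\|=\max(\|x\|_\infty,\,2\limsup_k|x_k|)$ is an equivalent lattice norm on $\ell^\infty$ making it an {\bf ISL} with $X'=\ell^1$ and $X'_{00}$ dense in $X'$, yet $\|\one\|_X=2$ while the right-hand side of your identity equals $1$ (finitely supported $y$ have $\|y\|_{X^*}=\|y\|_1$ there).

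Consequently the passage from the Banach--Steinhaus bound (which, incidentally, also uses completeness of $(X',\|\cdot\|_{X^*})$ --- true for K\"othe duals of an {\bf ISL}, but worth a line) to $\sup_n\|x^{(n)}\|_X<\infty$ requires $X'$ to be a norming subspace of $X^*$ at least up to a constant, and density of $X'_{00}$ in $X'$ alone does not deliver that. The gap closes, and your proof is complete, whenever $X_{00}$ is dense in $X$: then $X'=X^*$, the identity is Hahn--Banach plus density of $X'_{00}$, and this covers the reflexive ``for example'' case and all the weighted $\ell^p$ lattices actually used in the paper. But as a proof of (2) in the stated generality, the norming step needs either an additional (weak Fatou) hypothesis on $X$ or a different argument.
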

 
\begin{proof}
In fact, the proofs of the both claims are classical exercises, the first 
one with a use of the Lebesgue dominated convergence: 
$\lim_n \sum_jx^{(n)}_jy_j = \sum_jx_jy_j$.  
\end{proof}
 
Notice that without any hypothesis, the coordinatewise convergence and 
norm boundedness are not sufficient for the weak $\sigma(X,X')$ 
convergence (for instance, for $X=\ell^1$).
 
\section{Totally multiplicative sequences are always cyclic}
 
A sequence of complex numbers $x=(x_k)_{k\in\NN}$ is \emph{totally 
multiplicative} if $x_{nk}=x_nx_k$ for all $n,k\in\NN$ (so, $x_1=1$, if 
$x\neq 0$); $x$ is called also a semicharacter of $\NN$. 
 
\begin{Proposition}
Any nonzero semicharacter $x$, $x\neq 0$, lying in an {\bf ISL $X$}, is 
$\sigma(X,X')$-cyclic for the dilation semigroup $(D_n)$, i.e. $E_x=X$, 
where
$$ 
E_x = \Span_\sigma (D_n x : n\in\NN) .
$$
(So, in the case when $ X'= X^{*}$, it is norm cyclic). 

In particular (an extended form of the above Haar's Lemma), if such a 
sequence $x$ is in an {\bf ISL $X$} and $a\in X'$, and 
$$ 
\sum_{k\geq 1}a_{nk}x_k = 0 \quad (\forall n\geq 1), \text{ then } 
a_j=0, \ \forall j\geq 1.
$$
\end{Proposition}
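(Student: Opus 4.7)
The plan is to adapt the sieve-of-Eratosthenes recursion from the proof of Lemma 1.2 to the general \textbf{ISL} setting, replacing sup-norm domination by the coordinate-wise dominated convergence in Lemma 2.1(1). Let $p_1<p_2<\dots$ enumerate the primes and set
$$
x(0) := x, \qquad x(s) := (\mathrm{id} - x_{p_s} D_{p_s})\, x(s-1), \quad s\geq 1.
$$
Because $D_{p_s} D_{p_t}=D_{p_s p_t}$, each $x(s)$ is a finite linear combination of dilates $D_k x$, so $x(s)\in E_x$ for every $s\geq 0$.

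A straightforward induction on $s$, using total multiplicativity, yields
$$
x(s)_j = \begin{cases} x_j & \text{if $j=1$ or the smallest prime factor of $j$ exceeds $p_s$,} \\ 0 & \text{otherwise.} \end{cases}
$$
The only nontrivial case is when the smallest prime factor of $j$ equals $p_s$: writing $j=p_s\cdot(j/p_s)$, multiplicativity delivers $x(s)_j = x_j - x_{p_s} x_{j/p_s}=0$. This formula has two decisive consequences: $|x(s)_j|\leq |x_j|$ for all $s,j$, and $x(s)_j\to (\delta_1)_j$ as $s\to\infty$ (every $j\geq 2$ is sieved out once $p_s$ exceeds its smallest prime factor). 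Lemma 2.1(1) applied to $x(s)-\delta_1$, coordinate-wise dominated by $|x|\in X$, gives $x(s)\to\delta_1$ in $\sigma(X,X')$; since $E_x$ is $\sigma$-closed, $\delta_1\in E_x$.

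To promote $\delta_1\in E_x$ to $\delta_n\in E_x$ for every $n$, I would simply apply $D_n$ before passing to the limit: the vectors $D_n x(s)$ lie in $E_x$ (being finite linear combinations of $D_{nk}x$), they are coordinate-wise dominated by $|D_n x|\in X$, and they converge coordinate-wise to $D_n\delta_1=\delta_n$. Lemma 2.1(1) again yields $\delta_n\in E_x$, hence $X_{00}\subset E_x$. Since $X_{00}$ is $\sigma$-dense in $X$, $E_x=X$, and the dual ``extended Haar'' consequence is then immediate from the standard duality. The main subtlety in the argument — more than a genuine obstacle — is the coordinate-wise domination $|x(s)_j|\leq |x_j|$, which is precisely the arithmetic cancellation delivered by total multiplicativity and which lets one bypass any norm considerations in the convergence step.
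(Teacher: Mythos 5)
Your proposal is correct and follows essentially the same route as the paper: the same sieve recursion $x(s)=(\mathrm{id}-x_{p_s}D_{p_s})x(s-1)$, the same domination $|x(s)_j|\leq|x_j|$ feeding into Lemma 2.1(1), and the same $\sigma$-density of $X_{00}$ to conclude. Your extra care in obtaining $\delta_n\in E_x$ by passing to the limit of $D_n x(s)$ rather than applying $D_n$ to the limit is a small but welcome refinement of a step the paper leaves implicit.
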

 
\begin{proof}
We use the sequences $x(s)\in E_x$ considered in the proof of ``Extended 
Haar's Lemma'' (1.2 of Section 1), 
$$ 
x(s) =: (id-x_{p_s}D_{p_s})x(s-1),\quad s=1,\dots, x(0)= x.
$$ 
Then, $|x(s)_j|\leq |x_j|$ for every $s$ and $j$, so that --- by Lemma 2.1 
--- $(\sigma)\lim_s x(s)=\delta_1\in E_x$, and hence $\delta_n\in E_x$ 
($\forall n\geq 1$). But as it was mentioned, $\Lin(\delta_n:n\geq 1)$ is 
$\sigma(X,X')$-dense in $X$. 
\end{proof}
 
\begin{Remark}
Notice, that for $X=\ell^p$, $1\leq p<\infty$ and for every reflexive 
{\bf ISL}, Proposition 3.1 gives $D_n$-cyclicity also for the norm 
convergence. Moreover, if $x\in X$ is $\sigma(X,X')$-cyclic in a 
{\bf ISL $X$}, it is $ \sigma (Y,Y')$-cyclic in any larger {\bf ISL} 
$Y\supset X$. 

As an example to 3.1, a sequence $(k^{-\gamma })_{k\geq 1}$ is 
dilation-cyclic in any {\bf ISL $X$} where it is contained. In particular, 
forestalling a short discussion in Section 9 below, it follows that a 
function sequence $\varphi(nx)$, $n= 1,2,\dots$ is complete in $L^2(0,1)$ 
if $\varphi = \sum_{k\geq 1}\frac{\sin(k\pi x)}{k^\gamma}$, $\gamma>1/2$ 
(A.~Wintner \cite{Win1944}; later on, this fact was repeated in many other 
sources).
\end{Remark}
 
Now, we extend our language introducing the standard additive writing for 
the semigroup $\NN$, and recalling the \emph{Fourier--Bohr transform}. 
 
\section{The Fourier--Bohr transform in complex domain}
 
First, we recall the language of the dilation semigroup. Consider the set 
of positive integers $\NN$ as a subsemigroup of a (multiplicative) 
group of positive rationals $\QQ_+$ endowed with the discrete topology. 
The latter is canonically isomorphic to the (additively written) group 
$\ZZ(\infty)$ --- a subgroup of 
$\ZZ^\infty=\ZZ\times\ZZ\times\dots$ consisting of finitely 
supported sequences: $\alpha\in\ZZ(\infty)$ if and only if 
$\alpha = (\alpha_j)_{j\geq 1}$, $\alpha_j\in\ZZ$ and 
$\supp(\alpha) = \{j\in\NN : \alpha_j\neq 0\}$ is finite. The 
isomorphism is given by the Euclid prime decompositions: 
$$ 
r\longmapsto\alpha (r) \quad (r\in\QQ_+), \text{ where }
r=p^{\alpha(r)}:=\prod p_j^{\alpha_j},
$$ 
$p= (p_1,p_2,\dots)$ stands for the sequence of the consecutive primes 
($p_1=2$, $p_2=3$, etc.) and $\alpha(r)=(\alpha_1,\alpha_2,\dots)$, 
$\alpha_j\in\ZZ$. The $\alpha$-range of the subsemigroup 
$\NN\subset\QQ_+$ is 
$\alpha(\NN)=\ZZ_+(\infty)=:\ZZ(\infty)\cap\ZZ_+^\infty$. 

The (compact) dual group of $\ZZ(\infty)$ (and so of $\QQ_+$) is 
$\TT^\infty = \TT\times\TT\times\dots$ (with the infinite product 
topology), where the duality is written in a natural way: 
$$ 
\{\alpha,\zeta\} = \zeta^\alpha := \prod\zeta_j^{\alpha_j}, \quad 
\alpha = (\alpha_j)\in\ZZ(\infty) \text{ and } 
\zeta = (\zeta_j)_{j\geq 1}\in \TT^\infty,
$$ 
and respectively, 
$$ 
\{r,\zeta\} = \zeta^{\alpha (r)} := \prod\zeta_j^{\alpha_j}, 
\alpha(r) = (\alpha_j)\in\ZZ(\infty), 
r = p^{\alpha(r)} \text{ and } \zeta\in\TT^\infty.
$$ 
The corresponding Fourier (Laplace) transformation is called \emph{the 
Bohr transform} (or \emph{the Bohr lift}, \cite{Boh1913}). For a finitely  
supported function $h$ on $\QQ_+$ (or, respectively, $h$ on 
$\ZZ(\infty)$), it is defined as 
$$ 
Bh(\zeta) = \sum_{r\in\QQ_+} h(r)\zeta^{\alpha(r)}; \text{ resp., }
Bh(\zeta) = \sum_{\alpha\in\ZZ(\infty)} h(\alpha)\zeta^\alpha, 
\quad \zeta\in\TT^\infty.
$$ 
The shift operations for functions on $\QQ_+$ and $\ZZ(\infty)$ are 
defined, respectively, as
$$ 
D_s h(r)= h(rs^{-1}) \quad (r,s\in\QQ_+); \quad
S_\beta h(\alpha) = h(\alpha-\beta) \quad (\alpha,\beta\in\ZZ(\infty)).
$$ 
The Bohr transformation transforms both shifts into the multiplication 
$M_{\zeta^\alpha}$ by the corresponding character, 
$M_{\zeta^\alpha}f=\zeta^\alpha f(\zeta)$, so that the following lemma 
holds.
 
\begin{Lemma}
$$
BD_s = M_{\zeta^{\alpha}}B \quad (\alpha = \alpha(s)) \text{ and } 
BS_\beta = M_{\zeta^\beta}B. \bull
$$
\end{Lemma}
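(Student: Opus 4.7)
The plan is to verify both intertwining identities by direct substitution in the defining series for the Bohr transform; no deep tool is involved. Since the statement is formulated for finitely supported functions $h$, every sum is finite and the manipulations below are literal equalities, not conditionally convergent ones.

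I would start with the second identity, which is the cleaner model. Unwinding definitions,
$$
BS_\beta h(\zeta) = \sum_{\alpha\in\ZZ(\infty)} (S_\beta h)(\alpha)\,\zeta^\alpha
= \sum_{\alpha\in\ZZ(\infty)} h(\alpha-\beta)\,\zeta^\alpha.
$$
The substitution $\gamma = \alpha - \beta$ is a bijection of $\ZZ(\infty)$ (since $\beta\in\ZZ(\infty)$ is fixed), so the sum becomes
$$
\sum_{\gamma\in\ZZ(\infty)} h(\gamma)\,\zeta^{\gamma+\beta} = \zeta^\beta \sum_{\gamma}h(\gamma)\,\zeta^\gamma = \zeta^\beta\, Bh(\zeta) = M_{\zeta^\beta}Bh(\zeta),
$$
where the factorization $\zeta^{\gamma+\beta}=\zeta^\gamma\zeta^\beta$ is just the multiplicativity of the character pairing $\{\alpha,\zeta\}=\zeta^\alpha$ already recorded in the excerpt.

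For the first identity I would do the analogous computation on $\QQ_+$, transported through the Euclid isomorphism $r\mapsto \alpha(r)$ which takes multiplication to addition in $\ZZ(\infty)$. From the definitions,
$$
BD_s h(\zeta) = \sum_{r\in\QQ_+}(D_s h)(r)\,\zeta^{\alpha(r)} = \sum_{r\in\QQ_+} h(rs^{-1})\,\zeta^{\alpha(r)}.
$$
Writing $q = rs^{-1}$ (a bijection $\QQ_+\to\QQ_+$) and using $\alpha(r) = \alpha(qs) = \alpha(q) + \alpha(s)$, this equals
$$
\sum_{q\in\QQ_+} h(q)\,\zeta^{\alpha(q)+\alpha(s)} = \zeta^{\alpha(s)}\sum_q h(q)\,\zeta^{\alpha(q)} = \zeta^{\alpha}\, Bh(\zeta) = M_{\zeta^{\alpha}}Bh(\zeta),
$$
as required.

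The only step deserving comment is the additivity $\alpha(rs) = \alpha(r)+\alpha(s)$, which is exactly the statement that the Euclid prime-factor map $r\mapsto\alpha(r)$ is an isomorphism from the multiplicative group $\QQ_+$ onto the additive group $\ZZ(\infty)$. This is built into the setup of Section 4, so nothing beyond bookkeeping is needed; there is no genuine obstacle. If one wished to present the two identities as a single computation, the cleanest route would be to observe that via the isomorphism $\alpha$ the operator $D_s$ on functions on $\QQ_+$ is conjugate to $S_{\alpha(s)}$ on functions on $\ZZ(\infty)$, so the first identity reduces formally to the second.
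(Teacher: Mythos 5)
Your computation is correct and is precisely the routine change-of-variables verification that the paper leaves implicit: Lemma 4.1 is stated with no written proof (it is marked as immediate, the preceding text noting only that the Bohr transform carries shifts to multiplication by characters). Your observation that $D_s$ on $\QQ_+$ is conjugate via the Euclid isomorphism to $S_{\alpha(s)}$ on $\ZZ(\infty)$, so that the first identity reduces to the second, matches the paper's setup exactly.
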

 
\begin{Subsection}{Domain where the Bohr transform exists}
For functions having support in $\NN\subset\QQ_+$, the Bohr 
transform of $x\in X(\NN)$ is naturally defined on the corresponding 
$X'$-\emph{circular domain} $\DD^\infty_{X'}$, 
$$ 
\DD^\infty_{X'} = \big \{ \lambda = (\lambda_s)_{s\geq 1} : 
(\lambda^{\alpha(n)})_{n\geq 1}\in X(\NN)'\big \},
$$ 
or, in an additive writing, for $x\in X(\ZZ_+(\infty))$, 
$$ 
\DD^\infty_{X'} = \big\{ \lambda = (\lambda_s)_{s\geq 1} : 
(\lambda^\alpha)_{\alpha\in\ZZ_+ (\infty)} \in X(\ZZ_+(\infty))'\big\}.
$$ 
Precisely, 
$$
Bx(\lambda) = \sum_{\alpha\in\ZZ_+(\infty)} x_\alpha \lambda^\alpha,
\quad \lambda\in\DD^\infty_{X'}, 
\quad \forall x\in X(\ZZ_+(\infty)).
$$ 
Since the above series are absolutely convergent, they represent functions 
holomorphic on the interior part of the corresponding domain. In fact, 
since $X$ and $X'$ are lattices, 
$\lambda = (\lambda_s)_{s\geq 1}\in\DD^\infty_{X'}$ and 
$|\mu_s|\leq |\lambda_s|$ ($\forall s\geq 1$) entail 
$\mu\in\DD^\infty_{X'}$, so that --- following the standard complex 
analysis terminology --- $\DD^\infty_{X'}$ is \emph{a complete 
Reinhardt domain} in $\CC^\infty$. 

We often identify a lattice $X=X(\NN)$ on $\NN$ with its 
transplantation to $\ZZ_+(\infty)=\alpha(\NN)$, always with the help 
of the above defined map 
$n\mapsto \alpha(n) = (\alpha_1,\dots\alpha_s,0,\dots)$. So, given a 
sequence space $ X$, we formally should write $X\circ\alpha^{-1}$ for its 
transplanted image on $\ZZ_+(\infty)$, 
$$ 
X\circ\alpha^{-1} = \{ (x_\alpha) : x_\alpha = x_n, \text{ if } 
\alpha = \alpha (n) \text{ and } (x_n)\in X \}.
$$ 
For short, omitting $\alpha^{-1}$, we hope that the writings like 
$X(\ZZ_+(\infty))=X(\NN)$ (formally, $=X(\NN)\circ\alpha$) would 
not lead to confusions. 

For a power series realization, we speak on 
$$ 
X_A, \text{ \emph{the space of power series} }
f = \sum_{n\geq 1}\hat{f}(n)z^n,
$$ 
with $(\hat{f}(n))_{n\geq 1}\in X = X(\NN)$, ($A$ stands for 
'analytic'). Respectively,  
$$ 
X_A(\DD^\infty_{X'}) \text{ \emph{denotes the space of power series} }
f = \sum_{\alpha\in\ZZ_+(\infty)} \hat{f}(\alpha)\lambda^\alpha,
$$ 
defined (with the absolute convergence) for $\lambda\in\DD^\infty_{X'}$ 
and $(\hat{f}(\alpha))\in X(\ZZ_+(\infty))=X\circ\alpha^{-1}$. In fact, 
the latter one is the Bohr transform of the former, 
$X_A(\DD^\infty_{X'}) = BX(\ZZ_+(\infty))=BX(\NN)$. The spaces 
$X_A(\DD_{X'}^\infty)$ are equipped with the standard $X$-norm.
\end{Subsection}

\begin{Example}
For $X=\ell^p$, $1\leq p\leq\infty$, it is easy to see that the corresponding 
domains are \emph{multidiscs}:  
$$ 
\DD^\infty_{X'} = \big\{ \lambda = (\lambda_s)_{s\geq 1} : |\lambda_s|<1; 
\lambda\in \ell^{p'} \big\} \text{ for } 1<p\leq \infty,
$$ 
(since, by the Euler product/summation formula, 
$\lambda\in \ell^{p'}(\NN)$ $\Leftrightarrow$ 
$(\lambda^\alpha)_{\alpha\in\ZZ_+(\infty)}\in\ell^{p'}(\ZZ_+(\infty))$), 
where $\frac{1}{p'}+\frac{1}{p}=1$, and for $X=\ell^1$ 
$$ 
\DD^\infty_{X'}= \overline\DD^\infty =: 
\big\{ \lambda = (\lambda_s)_{s\geq 1} : |\lambda_s|\leq 1 \big\}.
$$ 
For short, 
$$ 
\DD^\infty_{(\ell^p)'} = \DD^\infty_{p'}.
$$ 
(For $p=2$, $\DD_2^\infty$ is the \emph{Hilbert multidisc} considered 
already in 1909 in \cite{Hil1909}). Bohr transforms of $X=\ell^p$ are power 
series with $\ell^p$ coefficients  
$$ 
B\ell^p = \ell^p_A(\DD_{p'}^\infty) 
= \left\{f : f(\lambda) = \sum_{\alpha\in\ZZ_+(\infty)} a_\alpha\lambda^\alpha 
: a\in \ell^p(\ZZ_+(\infty)); \lambda\in\DD_{p'}^\infty \right\} .
$$ 
Clearly, $\DD_{p'}^\infty = \DD^\infty\cap \ell^{p'}(\NN)$ and 
$\DD_\infty^\infty=\overline\DD^\infty$. 

\emph{More general}, for an {\bf N-ISL $X$} the following inclusions hold  
$$ 
\DD_1^\infty\subset\DD_{X'}^\infty\subset\overline\DD_\infty^\infty,
$$ 
so that $\DD_{X'}^\infty$ is in between of two above multidiscs. 

The following is an immediate consequence of Lemma 4.1. 
\end{Example}
 
\begin{Lemma} 
Let $ X$ be an {\bf ISL}. A subspace $E\subset X$ is $(D_n)_{n\in\NN}$ 
invariant if and only if its Bohr image $BE$ is 
$M_\zeta =: \{M_{\zeta^\alpha} : \alpha\in\ZZ(\infty)\}$-invariant. 

In particular, $x\in X$ is $(D_n)_{n\in\NN}$-cyclic if and only if $Bx$ 
is $M_\zeta$-cyclic in $BX(\DD_{X'}^\infty)$, i.e. 
$$ 
BX(\DD_{X'}^\infty) = \clos\big(pBx : p \text{ runs over all polynomials} \big)
$$ 
($\clos$ with respect to the corresponding topology).
\end{Lemma}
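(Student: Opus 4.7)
The plan is to deduce Lemma 4.3 as a direct consequence of the intertwining identities of Lemma 4.1. By construction, the Bohr transform $B$ is a linear bijection between the coefficient lattice $X=X(\ZZ_+(\infty))$ and the power-series space $BX(\DD_{X'}^\infty)$, and the $X$-norm is transported verbatim to its image. Thus $B$ is an isometric isomorphism, and, more importantly for this lemma, a homeomorphism for the weak topology $\sigma(X,X')$ and its pushforward on $BX(\DD_{X'}^\infty)$, so forming $\sigma$-closed linear spans commutes with $B$.

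The invariance equivalence is then immediate. Lemma 4.1 gives $BD_n = M_{\zeta^{\alpha(n)}}B$ for every $n\in\NN$, so $D_n E \subset E$ for all $n\geq 1$ is equivalent to $M_{\zeta^{\alpha(n)}}BE \subset BE$ for all $n\geq 1$, i.e., to invariance of $BE$ under multiplication by every monomial $\zeta^\alpha$ with $\alpha\in\alpha(\NN) = \ZZ_+(\infty)$. Since these monomials linearly span the algebra of polynomials in $\zeta_1,\zeta_2,\ldots$, this is precisely the $M_\zeta$-invariance stated in the lemma.

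For the cyclicity assertion, I would apply the first part to $E=E_x=\Span_\sigma(D_n x : n\geq 1)$. The Bohr image of the generating set is $\{\zeta^{\alpha(n)}Bx : n\geq 1\}$, whose complex linear hull equals $\{p\cdot Bx : p \text{ a polynomial}\}$; by the topological remarks above,
$$
BE_x = \clos\bigl(p\cdot Bx : p \text{ polynomial}\bigr),
$$
and $E_x = X$ translates into $BE_x = BX(\DD_{X'}^\infty)$, which is the announced $M_\zeta$-cyclicity of $Bx$.

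There is essentially no obstacle: Lemma 4.3 is a formal transcription of Lemma 4.1 together with the trivial observation that the monomials $\zeta^{\alpha(n)}$, $n\in\NN$, linearly generate the polynomial algebra. The only book-keeping point worth checking is the topological one, namely that $B$ intertwines $\sigma(X,X')$ on $X$ with its natural counterpart on $BX(\DD_{X'}^\infty)$; this is automatic from the way the image space is defined, carrying the $X$-norm and its dual pairing with $X'$ along.
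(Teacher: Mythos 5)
Your argument is correct and is exactly the route the paper intends: the paper gives no written proof, declaring the lemma "an immediate consequence of Lemma 4.1," i.e.\ of the intertwining $BD_n=M_{\zeta^{\alpha(n)}}B$ together with the fact that $B$ transports the norm and the $\sigma(X,X')$ topology to $BX(\DD_{X'}^\infty)$. Your write-up simply makes these routine verifications explicit, so there is nothing further to add.
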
 
 
\begin{Comments}
{\bf (1)} 
$M_{\zeta}$-cyclic functions are also called \emph{weakly invertible}.

{\bf (2)} 
In the above notation, a semicharacter (totally multiplicative sequence) 
$x=(x_n)_{n\geq 1}$, $x_n = \omega^{\alpha(n)}$, lying in $\ell^2(\NN)$, has 
as its Bohr transform a \emph{reproducing kernel} 
$k_{\overline{\omega}}=Bx$ of the Hardy space 
$\ell^2_A(\DD_2^\infty)=H^2(\DD_2^\infty)$ at the point 
$\overline{\omega}=(\overline{\omega}_j)_{j\geq 1}\in\DD_2^\infty$ of the 
Hilbert multidisc $\DD_2^\infty$, 
$$ 
k_{\overline{\omega}}(\zeta) 
= \sum_{\alpha\in\ZZ_+(\infty)}\omega^\alpha\zeta^\alpha
= \prod_{j\in\NN}{\frac{1}{1-\zeta_j\omega_j}}, \quad \zeta\in\DD_2^\infty.
$$ 
In this case, Proposition 3.1 is classical, and at least 7 independent 
proofs are known, as in the first 7 references quoted in Section 1. In 
fact, the matter is on the cyclicity of some (various from paper to paper) 
totally multiplicative sequences in the space $\ell^2$; paper \cite{Que2015} 
deals with the space $\ell^\infty$. All these proofs, excepting 
\cite{Nik2012}, essentially use certain number theoretic identities and 
inequalities, in particular the following M\"obius inversion formula which 
holds for every complex sequences $a=(a_j)_{j\geq 1}$ and $b=(b_j)_{j\geq 1}$: 
$$ 
a_n = \sum_{d\mid n}b_d \quad (n=1,2,\dots) \Leftrightarrow  
b_n = \sum_{d\mid n} \mu\left(\frac{n}{d}\right) a_d \quad (n=1,2,\dots),
$$ 
where $\mu$ stands for the \emph{M\"obius function}: $\mu(p)=-1$, 
$\mu(p^k)=0$ for $k\geq 2$ and every prime $ p$, and 
$\mu (p_1^{\alpha_1}\dots p_s^{\alpha_s})
=\mu(p_1^{\alpha_1})\dots\mu(p_s^{\alpha_s})$ for any integer 
$n=p_1^{\alpha_1}\dots p_s^{\alpha_s}$. In fact, the mentioned M\"obius 
inversion formula has a very simple form in terms of the Bohr transforms:  
$$ 
Ba= B\one\cdot Bb \Leftrightarrow  
Bb=\frac{1}{B\one} \cdot Ba ,
$$
where $\one = (1,1,\dots)$, $B\one(\zeta)=\prod_{k\geq 1}\frac{1}{1-\zeta_k}$ 
for $\sum_k|\zeta_k| < \infty$, $|\zeta_k|<1$ ($k\geq 1$). 

The tenth proof of the cyclicity given in Sections 1 and 3 is the simplest 
one --- it uses nothing special but the definition of the cyclicity itself 
and the basic remarks on the duality. 

{\bf (3)}
In the Hilbert space case of $\ell^2(\NN)$, the Parseval theorem says that  
$$
\|x\|^2_{\ell^2(\NN)} = \int_{\TT^\infty} |Bx(\zeta)|^2\,dm_\infty(\zeta), 
\quad \forall x\in \ell^2(\NN).
$$ 
Moreover, at least for functions $f$ continuous on $\TT^\infty$ (for 
instance, for $f=Bx$, $x\in \ell^1(\NN)$), the following Bohr's formula holds 
$$ 
\int_{\TT^\infty} f(\zeta)\,dm_\infty(\zeta) 
= \lim_{T\to \infty}\frac{1}{2T} \int_{-T}^{T}f(K^t)\,dt,
$$ 
where $t\mapsto K^t := (K_1^{-it},K_2^{-it},\dots)\in \TT^\infty$, 
$t\in\RR$, stands for an arbitrary \emph{Kronecker solenoid} on 
$\TT^\infty$; the latter means the following property: the numbers 
$\log K_j$, $j= 1,2,\dots$ are \emph{rationally independent} ($K_j$ are 
real and positive). In particular, for $K_j=p_j$ (consecutive primes) and 
for $x\in \ell^2(\NN)$, we deal with \emph{Dirichlet series} 
$$
Bx(K^t) = \sum_{n\in\NN} \frac{x_n}{n^{it}}; \quad t\in \RR.
$$ 
Multiplication in $BX(\DD_{X'}^\infty)$ (in particular, multiplication of 
Dirichlet series) corresponds to the so called ``Dirichlet convolution'' in 
$X(\NN)$. The choice $x_n= n^{-\gamma}$, $\gamma>1/2$ (a completely 
multiplicative sequence!), gives \emph{the Euler-Riemann $\zeta$-function}
on a vertical line, $Bx(K^t)=\sum_{n\in \NN}\frac{1}{n^{\gamma+it}}$, 
$t\in\RR$.

{\bf (4) Finite dimensional sections} $\DD_{X'}^J=\DD_{X'}^\infty\cap\CC^J$ 
($J$ is a set of indices). Given a (finite) set of indices $J\subset\NN$, 
let $\CC^J = \{\zeta\in\CC^\infty : \zeta=(\zeta_j), \supp(\zeta)\subset J\}$. 
Obviously, 
$$ 
\ell^1_{\|\delta_\alpha\|}(\ZZ_+(\infty)) \subset X
\subset \ell^\infty_{\|\delta_\alpha\|}(\ZZ_+(\infty)),
$$ 
and hence 
$$ 
\left\{ \lambda=(\lambda_j) : \sum_{\alpha\in\ZZ_+(\infty)} 
  \frac{|\lambda^\alpha|}{\|\delta_\alpha\|_X}<\infty \right\}
\subset\DD_{X'}^\infty\subset 
\left\{ \lambda=(\lambda_j) : \sup_\alpha 
  \frac{|\lambda^\alpha|}{\|\delta_\alpha\|_X}<\infty \right\}.
$$ 
In particular, if $X$ is an {\bf N-ISL} (as any $\ell^p$ space), we simply 
have 
$$ 
\{ \lambda\in \ell^1: |\lambda_j|<1 \} \subset \DD_{X'}^\infty \subset 
\{ \lambda : |\lambda_j| \leq  1 \},
$$
and so 
$$ 
\DD^J \subset \DD_{X'}^J \subset \overline{\DD}^J 
  \text{ \emph{for every finite set} } J\subset\NN.
$$ 
In general, we can only claim $\lambda\in\DD_{X'}^\infty$ $\Rightarrow$ 
$\sup_{k\in\NN} |\lambda_j^k|/\|\delta_{ke_j}\|_X<\infty$, where 
$e_j=(\delta_{j,k})_{k\geq 1}\in\ZZ_+(\infty)$, so that 
$$ 
|\lambda_j|\leq \varliminf_k \|\delta_{ke_j}\|_X^{1/k} =: R_j < \infty
\quad (\forall j).
$$ 
Sometimes, the latter necessary condition for $\lambda\in\DD_{X'}^\infty$ 
is not far from being sufficient, as for example for $X=\ell^p_{w_n}(\NN)$  
with a totally multiplicative weight $w_n=n^\gamma$, $n\geq 1$ (in this 
case, $R_j=p_j^\gamma$, $p_j$ being the $j$-th consecutive prime). For 
some other examples this is not the case. For example, for an important 
space $X=\ell^2_{w_\alpha}(\ZZ_+(\infty))$ with 
$$ 
w_\alpha^2 = \frac{\alpha!}{|\alpha|!} 
= \frac{\alpha_1!\alpha_2!\dots\alpha_s!}{(\alpha_1+\alpha_2+\dots\alpha_s)!}, 
\quad \alpha\in\ZZ_+(\infty),
$$ 
$R_j=1$ for every $j$, but $\DD_{X'}^J$ are rather far from the polydiscs 
$\DD^J$; in fact, $\DD_{X'}^J$ are the \emph{open unit balls}
$\BB^J=\{\lambda = (\lambda_j)_{j\in J}: \sum_j|\lambda_j|^2< 1\}$ 
(see Lemma 8.1). The Bohr transform of $X$ is known as the 
\emph{Drury--Arveson space}, important for several variables interpolation 
theory and several operators theory (the row contractions), see for 
example \cite{ARS2010,RiS2012}, and references therein. We return to this 
space in Sections 6 and 8. 
\end{Comments}
 
\begin{Subsection}{Multipliers}
Given an {\bf ISL $X=X(\NN)$}, a (formal) power series 
$F=\sum_{\alpha\in\ZZ_+ (\infty)}\hat{F}(\alpha)\zeta^\alpha$ in 
$\DD_{X'}^\infty$ is called \emph{multiplier} (or $X$-\emph{multiplier}) 
if 
$$ 
f\in X_A(\DD_{X'}^\infty) \Rightarrow Ff\in X_A(\DD_{X'}^\infty),
$$ 
where $X_A(\DD_{X'}^\infty)=BX(\DD_{X'}^\infty)$. The set of multipliers, 
denoted $\Mult(X_A)=\Mult(X_A(\DD_{X'}^\infty))$ and endowed with the 
multiplier norm and pointwise multiplication, is a Banach algebra 
containing polynomials and such that 
$\Mult(X_A(\DD_{X'}^\infty))\subset 
X_A(\DD_{X'}^\infty)\cap H^\infty(\DD_{X'}^\infty)$. It is clear that
$$ 
\sum_{\alpha\in\ZZ_+(\infty)}|\hat{F}(\alpha)|\cdot\|\zeta^\alpha\|_{\Mult} < \infty 
\Rightarrow F\in\Mult(X_A),
$$ 
and $F\in\Mult(X_A)$ $\Rightarrow$ 
$\sup_\alpha|\hat{F}(\alpha)|\cdot\|\zeta^\alpha\|_\Mult<\infty$. Indeed, 
the latter property follows from the rotation invariance of $\Mult(X_A)$: 
if $F_t(\zeta)=F(t\zeta)$, where $t\zeta=(t_i\zeta_i)_{i\geq 1}$, 
$t_i\in\TT$, then $F_tf=(Ff_{\overline{t}})_t$ and hence 
$\|F_t\|_\Mult=\|F\|_\Mult$,  
$$
\|\hat{F}(\alpha)\zeta^\alpha\|_\Mult
= \left\|\int_{\TT^\infty}\overline{t}^{\alpha}F_t\,dm_\infty(t)\right\|_\Mult
\leq \|F\|_\Mult. 
$$ 
It follows, in particular, that 
$$ 
\ell^1_A(\DD_{X'}^\infty)\subset \Mult(X_A(\DD_{X'}^\infty))  
\Leftrightarrow  
\sup_\alpha \|\zeta^\alpha\|_\Mult = \sup_{n\geq 1}\|D_n\|_{X\to X}<\infty .
$$ 
The following lemma is obvious. 
 
\begin{Lemma}
Let $X$ be an ISL, and suppose $f,g\in X_A$ are cyclic and at least one of 
them is in $\Mult(X_A)$. Then $F=fg$ is cyclic. 
\end{Lemma}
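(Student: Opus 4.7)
My plan is a very short two-step closure argument. Without loss of generality assume $g\in\Mult(X_A)$, and write $E_F=\clos(pF:p\text{ polynomial})$ for the cyclic subspace generated by $F=fg$; I aim to show $E_F=X_A$.

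First, I would note the trivial identity $M_g(pf)=p(fg)=pF\in E_F$, valid for every polynomial $p$. Thus $M_g$ sends the entire polynomial orbit of $f$ into $E_F$. Since $f$ is cyclic, Lemma 4.4 tells us this orbit is $\sigma(X,X')$-dense in $X_A$. Multiplication by $g$ is bounded on $X_A$ by the multiplier hypothesis, hence continuous for the topology in which the closures of Lemma 4.4 are taken, and therefore $M_g(X_A)\subset E_F$.

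Second, specialising to $p=q$ a polynomial yields $qg=M_g(q)\in E_F$ for every polynomial $q$. But cyclicity of $g$ gives $X_A=\clos(qg:q\text{ polynomial})\subset E_F$. Hence $E_F=X_A$, which is exactly the cyclicity of $F$.

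The only potentially delicate point is the continuity of $M_g$ for the $\sigma(X,X')$ topology rather than the norm topology. Whenever $X'=X^{*}$ (for instance $X$ reflexive or $X_{00}$ norm-dense in $X$), this is automatic from norm boundedness; in the general ISL framework one must check that the adjoint convolution operator associated to $g$ preserves the dual lattice $X'$, which is a routine estimate using only that $g$ is a multiplier. This is where I expect any subtlety to sit, although for the weighted $\ell^p$ examples that are the main targets of the paper the point is immediate.
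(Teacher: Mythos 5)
Your argument is correct and is essentially the paper's own proof: the paper picks polynomials $p_n$ with $\|p_nf-1\|_X\to 0$, observes that $p_nF=M_g(p_nf)\to g$, so $g\in E_F$, and concludes from the cyclicity of $g$ --- which is exactly your two steps, compressed into a single approximating sequence for the element $1$. Your closing remark about the $\sigma(X,X')$-continuity of $M_g$ identifies a genuine subtlety that the paper silently sidesteps by phrasing the cyclicity of $f$ as norm approximation of $1$.
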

 
\begin{proof}
Suppose $g\in\Mult(X_A)$, and let $(p_n)$ be polynomials such that 
$\lim_n\|p_nf-1\|_X=0$. Then $\lim_np_nF=g\in E_F$, and since $g$ is 
cyclic, $E_F=X_A$.
\end{proof}
\end{Subsection}
 
\section{Cyclicity of (relatively) multiplicative sequences}
 
A sequence $x=(x_k)_{k\in \NN}$ (of complex numbers) is called 
\emph{(relatively) multiplicative} if $x_{nk}=x_nx_k$ for coprime $n$ and 
$k$: $\GCD(n,k)=1$. For example, the M\"obius function $\mu$ is 
multiplicative (but not totally multiplicative). A long list of arithmetic 
multiplicative functions can be found in \cite{MF-Wiki}.
 
\begin{Lemma}
(1) A sequence $x=(x_k)_{k\in\NN}$ is multiplicative if and only if, for 
every $N\in\NN$, its Bohr transform $Bx$ is of the form 
$$ 
Bx(\zeta)=f_1(\zeta_1)\dots f_N(\zeta_N)F_N(\zeta), \quad\zeta=(\zeta_1,\dots),
$$ 
where $f_j$ are functions (formal power series) in one variable and $F_N$ 
depends on $\zeta_{N+1},\zeta_{N+2},\dots$ only. The representation is 
unique.

(2) A sequence $x=(x_k)_{k\in\NN}$ with finite support is multiplicative 
if and only if, for a certain $N$ and some polynomials $f_j$, 
$$
Bx(\zeta) = f_1(\zeta_1)\dots f_N(\zeta_N), 
\zeta = (\zeta_1,\dots).
$$ 

Any function of this Lemma, say $f$, satisfies $\hat{f}(0)=1$, 
$0=(0,0,\dots)$.
\end{Lemma}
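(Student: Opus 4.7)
My plan is to exploit the identification $n = p^{\alpha(n)}$ that turns the coprimality condition $\GCD(n,k)=1$ into the geometric condition $\supp(\alpha(n))\cap\supp(\alpha(k))=\emptyset$. Under this identification, multiplicativity of $x$ says exactly that $x_{p^{\alpha+\beta}}=x_{p^\alpha}x_{p^\beta}$ whenever $\alpha,\beta$ have disjoint supports, which is tailor-made for a tensor-product factorization of the Bohr transform over disjoint groups of variables.

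For direction $(\Rightarrow)$ in (1), assume $x\neq 0$ is multiplicative. Putting $n=k=1$ in $x_{nk}=x_nx_k$ forces $x_1\in\{0,1\}$, and $x_1=0$ would give $x_n=x_nx_1=0$ for all $n$; so $x_1=1$. I would then define the one-variable formal series $f_j(z):=\sum_{k\geq 0}x_{p_j^k}z^k$ (so $f_j(0)=1$), and for each $N$ set $F_N(\zeta):=\sum x_{p^\alpha}\zeta^\alpha$ with $\alpha$ ranging over multi-indices supported in $\{N+1,N+2,\dots\}$. Since $p_1^{\alpha_1},p_2^{\alpha_2},\dots$ are pairwise coprime, iterating multiplicativity yields $x_{p^\alpha}=\prod_j x_{p_j^{\alpha_j}}$ for every $\alpha\in\ZZ_+(\infty)$. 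Splitting each $\alpha$ at index $N$ then reorganizes the Bohr series into $Bx(\zeta)=f_1(\zeta_1)\cdots f_N(\zeta_N)\,F_N(\zeta)$.

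For $(\Leftarrow)$, I would suppose such a factorization exists for every $N$ with $f_j(0)=1$ and $F_N(0)=1$. Writing $f_j(z)=\sum_k c_{j,k}z^k$, comparing coefficients of $\zeta^\alpha$ on both sides gives $x_{p^\alpha}=\prod_{j=1}^N c_{j,\alpha_j}$ whenever $\supp(\alpha)\subset\{1,\dots,N\}$. Given coprime $n=p^\alpha$ and $k=p^\beta$, I pick $N$ containing both supports; for each $j\leq N$ exactly one of $\alpha_j,\beta_j$ vanishes, and since $c_{j,0}=1$ this forces $c_{j,\alpha_j+\beta_j}=c_{j,\alpha_j}c_{j,\beta_j}$. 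Taking the product over $j\leq N$ yields $x_{nk}=x_n x_k$. For uniqueness, setting $\zeta_i=0$ for all $i\neq j$ in the factorization isolates $f_j(\zeta_j)$, and then $F_N$ is forced as $Bx/\prod_{j\leq N}f_j$.

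Part (2) follows quickly from (1): if $x$ has finite support, pick $N$ so large that every prime dividing some $n\in\supp(x)$ lies in $\{p_1,\dots,p_N\}$; then $x_{p_j^k}=0$ for all $j>N$, $k\geq 1$, so $f_j\equiv 1$ for $j>N$, $F_N\equiv 1$, and each $f_j$ with $j\leq N$ has only finitely many nonzero coefficients and is therefore a polynomial. The converse is immediate from (1). There is no real obstacle here — the argument is essentially coefficient bookkeeping — but the one point that needs care is fixing the normalization $\hat f(0)=1$, without which the factors are determined only up to compensating multiplicative constants; no convergence issue arises because everything lives in the formal ring $\CC[[\zeta_1,\zeta_2,\dots]]$ in which each monomial involves only finitely many variables.
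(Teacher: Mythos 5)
Your proposal is correct and follows essentially the same route as the paper: the forward direction is exactly the Euler partial summation based on $x_{p^\alpha}=\prod_j x_{p_j^{\alpha_j}}$, and part (2) is deduced from (1) in the same way. You merely spell out the converse and the uniqueness (which the paper dismisses as obvious) via coefficient comparison and setting variables to zero, and your remark that the normalization $\hat f(0)=1$ is what pins down the factors is a sensible clarification.
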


\begin{proof}
(1) If $x$ is multiplicative and $n=p_1^{\alpha_1}\dots p_s^{\alpha_s}$, 
then $x_n=a_1(\alpha_1)\dots a_s(\alpha_s)$ where $a_s=(a_s(j))_{j\geq 1}$, 
$a_s(j)=x_{p_s^j}$. Making the Euler partial summation 
$$ 
Bx(\zeta) = \sum_{\alpha_1\geq 0}a_1(\alpha_1)\zeta_1^{\alpha_1}
\sum_{\alpha_2\geq 0}a_2(\alpha_2)\zeta_2^{\alpha_2}\dots
\sum_{\alpha_{N+1}}a_{N+1}(\alpha_{N+1})\zeta_{N+1}^{\alpha_{N+1}}\dots,
$$ 
we obtain the claimed expression. The converse is also obvious. 

(2) This is an automatic consequence of (1) since $Bx$ depends on a finite 
family of variables only.
\end{proof}
 
Below, we make use the following \emph{restriction operation} (and the 
corresponding notation): given a set of indices $\sigma\subset\NN$, an 
{\bf ISL $X$}, and $x\in X$, define $R_\sigma x$ by 
$$ 
B(R_\sigma x)=\sum_{\alpha,\supp(\alpha)\subset\sigma} 
\hat{B}x(\alpha)\zeta^\alpha,
$$ 
or equivalently, $(R_\sigma x)_n=x_n$ if $\supp(\alpha(n))\subset\sigma$, 
and $(R_\sigma x)_n=0$ for any other $n\in\NN$. We also are using the 
notation
$$ 
R_\sigma F = B(R_\sigma x) \text{ for } F=Bx.
$$ 
Clearly, $R_\sigma x\in X$ and $\|R_\sigma x\|\leq \|x\|$. If a sequence 
$\zeta=(\zeta_j)_{j\in\NN}$ is considered as a function on $\NN$ and 
$\chi_\sigma$ stands for the indicator (characteristic) function of 
$\sigma$, then $R_\sigma F(\zeta)=F(\chi_\sigma\zeta)$ where $F=Bx$.

The norm $\|\cdot\|_X$ is called (relatively) \emph{multiplicative norm} 
if $\|fg\|_X = \|f\|_X\|g\|_X$ for $f\in R_\sigma X$, $g\in R_{\sigma'}X$  
with $\sigma\cap\sigma'=\emptyset$ (for sequences, the product means a 
convolution on $\ZZ_+(\infty)$). One can easily check the following lemma. 
 
\begin{Lemma}
Let $X$ be an {\bf ISL}. (1) If $x\in X$ is a multiplicative sequence, 
then the functions $f_j$ ($j\in\NN$) and $F_N$ from Lemma 5.1 satisfy 
$f_j=B(R_{\{j\}}x)$ and $F_N=B(R_\sigma x)$, where 
$\sigma=\{N+1,N+2,\dots\}$. 

(2) For $X=\ell^p$, $1\leq p\leq\infty$, the norm $\|\cdot\|_X$ is 
multiplicative; the same property holds for the weighted space $X=\ell^p_w$ 
where $w=(w_n)_{n\geq 1}$ is a (relatively) multiplicative weight.
\end{Lemma}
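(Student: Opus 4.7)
The plan is to handle both parts by unwinding the definitions and exploiting the prime-factor bookkeeping encoded in the isomorphism $n \leftrightarrow \alpha(n)$.

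For (1), I would revisit the Euler-type partial summation carried out in the proof of Lemma~5.1. There, the inner power series $\sum_{\alpha_j \geq 0} a_j(\alpha_j)\zeta_j^{\alpha_j}$, with $a_j(k) = x_{p_j^k}$, is by inspection exactly $\sum_\alpha x_\alpha \zeta^\alpha$ restricted to those $\alpha \in \ZZ_+(\infty)$ with $\supp(\alpha) \subset \{j\}$; by the definition of $R_\sigma$ this is $B(R_{\{j\}} x)(\zeta)$. The same observation applied to the tail factor, whose summation variables $\alpha_{N+1}, \alpha_{N+2}, \dots$ range over $\alpha$ with $\supp(\alpha) \subset \{N+1, N+2, \dots\}$, identifies $F_N$ with $B(R_\sigma x)$. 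The uniqueness clause of Lemma~5.1 ensures that these are the only choices compatible with the factorization.

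For (2), I would write the product $fg$ as a convolution on $\ZZ_+(\infty)$: $(fg)_\gamma = \sum_{\alpha+\beta=\gamma} f_\alpha g_\beta$. Under the hypothesis $f \in R_\sigma X$, $g \in R_{\sigma'} X$ with $\sigma \cap \sigma' = \emptyset$, any nonzero summand requires $\supp(\alpha) \subset \sigma$ and $\supp(\beta) \subset \sigma'$. Because $\sigma$ and $\sigma'$ are disjoint, the decomposition $\gamma = \alpha + \beta$ is uniquely forced, namely $\alpha = \gamma|_\sigma$ and $\beta = \gamma|_{\sigma'}$, and the convolution collapses to the single product $f_{\gamma|_\sigma}\, g_{\gamma|_{\sigma'}}$, supported on those $\gamma$ with $\supp(\gamma) \subset \sigma \cup \sigma'$. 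A Fubini-type split then yields
\[
  \|fg\|_p^p \;=\; \sum_{\alpha,\,\supp(\alpha)\subset\sigma}\,\sum_{\beta,\,\supp(\beta)\subset\sigma'} |f_\alpha|^p |g_\beta|^p \;=\; \|f\|_p^p\,\|g\|_p^p,
\]
with the obvious modification ($\sup$ in place of $\sum$) for $p = \infty$.

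For the weighted version, the hypothesis that $w$ is relatively multiplicative on $\NN$ translates, via $n \leftrightarrow \alpha(n)$, into $w_{\alpha+\beta} = w_\alpha w_\beta$ whenever $\supp(\alpha) \cap \supp(\beta) = \emptyset$. Re-running the previous computation with $|f_\alpha| w_\alpha$ and $|g_\beta| w_\beta$ in place of $|f_\alpha|$ and $|g_\beta|$ reproduces the same factored identity, now for $\|\cdot\|_{X_w}$. There is no substantive obstacle in this lemma; the only point requiring care is keeping the $\NN$ and $\ZZ_+(\infty)$ indexings aligned, so that ``coprime $n,k$'' correctly becomes ``disjointly supported $\alpha,\beta$'' and the convolution sum degenerates to a single term.
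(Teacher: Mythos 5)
Your proof is correct; the paper gives no argument for this lemma (it is stated as ``one can easily check''), and your verification is exactly the intended routine one: identifying the Euler factors of Lemma~5.1 with the restrictions $B(R_{\{j\}}x)$, $B(R_\sigma x)$ via uniqueness, and observing that disjoint supports collapse the convolution $\sum_{\alpha+\beta=\gamma}f_\alpha g_\beta$ to the single term $f_{\gamma|_\sigma}g_{\gamma|_{\sigma'}}$ so the $\ell^p$ (and weighted $\ell^p_w$) norm factors.
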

 
The following Proposition partially reduces the cyclicity of a 
multiplicative sequence to the cyclicity of its one-variable factors $f_j$ 
(in the sense of Lemma 5.1).
 
\begin{Proposition}
Let $X$ be an {\bf ISL}. 
(1) If $x\in X$ is $D_n$-cyclic then $R_\sigma x$ is $D_n$-cyctic in 
$R_\sigma X$ for every $\sigma\subset\NN$, $\sigma\neq\emptyset$. 

In particular, if $x$ is cyclic and multiplicative, the functions $f_j$ of 
Lemma 5.1 are $z$-cyclic in  
$$
X_j(\DD) =: BR_{\{j\}}X 
= \left\{ f=\sum_{k\geq 0}a_kz^k : a_k= y_{p_j^k}, y\in X\right\},
$$ 
$X_j$ is equipped with the norm induced from $X$. 

(2) Assume the norm $\|\cdot\|_X$ is multiplicative. Then the converse to 
(1) holds: if $x\in X$ is multiplicative and every $f_j$ of Lemma 5.1 is 
$z$-cyclic in $X_j$ then $x$ is $D_n$-cyclic in $X$.

(3) If $x\in X$ is multiplicative and all $f_j$ and $F_N$ of Lemma 5.1 are 
in $\Mult(X)$ (see 4.5 above), then the converse to (1) holds: 
$z$-cyclicity of $f_j$ in $X_j$ implies the $D_n$-cyclicity of $x$ in $X$. 
\end{Proposition}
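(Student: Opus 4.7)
The plan is to project the identity $E_x=X$ through the restriction $R_\sigma:X\to X$. First I will check that $R_\sigma$ is a lattice contraction (by axiom (i)) whose adjoint under the standard duality is $R_\sigma$ acting on $X'$, making it $\sigma(X,X')$-continuous. The key arithmetic computation will be that $R_\sigma(D_n x)=D_n R_\sigma x$ when $\supp(\alpha(n))\subset\sigma$ and vanishes otherwise, because $n\mid k$ forces $\supp(\alpha(k))\supset\supp(\alpha(n))$. Applying $R_\sigma$ to $E_x=X$ will then yield
$$
R_\sigma X=\overline{\Span}_\sigma\bigl\{D_n R_\sigma x:\supp(\alpha(n))\subset\sigma\bigr\},
$$
which is the required $D_n$-cyclicity of $R_\sigma x$ in $R_\sigma X$. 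Specializing to $\sigma=\{j\}$ and invoking Lemma 4.3 (since $D_{p_j^k}$ is the Bohr dual of multiplication by $\zeta_j^k$) gives the $z$-cyclicity of $f_j$ in $X_j(\DD)=BR_{\{j\}}X$.

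\textbf{Plan for (2).} Assuming the multiplicative norm, I will use $z$-cyclicity of each $f_j$ to select polynomials $p_j^{(\varepsilon)}(\zeta_j)$ with $p_j^{(\varepsilon)}f_j\to 1$ in $\sigma(X_j,X_j')$ and uniform bounds $\|p_j^{(\varepsilon)}f_j\|_{X_j}\le C_j$. For fixed $N$ I will form $P_N=\prod_{j=1}^N p_j^{(\varepsilon_j)}\in X$ and use Lemma 5.1 to write
$$
P_N\cdot Bx=\Bigl(\prod_{j=1}^Np_j^{(\varepsilon_j)}f_j\Bigr)\cdot F_N,
$$
a finite linear combination of dilates of $x$. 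Multiplicativity of the norm gives a uniform bound on $\{P_NBx\}$, and coordinate-wise $(P_NBx)_\alpha\to F_N(\alpha)$. Combining these via Lemma 2.1 yields $P_NBx\to F_N$ in $\sigma(X,X')$, placing $F_N\in E_x$. It then remains to let $N\to\infty$: since $F_N=R_{\{N+1,N+2,\dots\}}x$ satisfies $|F_N-\delta_1|\le|x|$ pointwise with coordinate-wise limit $0$, Lemma 2.1(1) produces $F_N\to\delta_1$ in $\sigma(X,X')$. Hence $\delta_1\in E_x$, the identities $\delta_n=D_n\delta_1$ give $X_{00}\subset E_x$, and therefore $E_x=X$.

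\textbf{Plan for (3).} My strategy is to reduce to showing that a single factor $f_j$ is already $D_n$-cyclic in $X$; once that is done, Lemma 4.6 applied to the decomposition $Bx=f_j\cdot\prod_{i\ne j}f_i\cdot F_N$, whose second factor lies in $\Mult(X)$ by hypothesis (being a product of multipliers), closes the argument. To lift the $z$-cyclicity of $f_j$ from $X_j$ to $X$ I will take polynomials $p_m(\zeta_j)$ with $p_mf_j\to 1$ in $\sigma(X_j,X_j')$ and observe that, for every $y\in X'$, the coordinate restriction $(y_{p_j^k})_{k\ge 0}$ belongs to $X_j'$. Then
$$
\langle p_mf_j,y\rangle_X=\langle p_mf_j,(y_{p_j^k})\rangle_{X_j}\longrightarrow y_1=\langle\delta_1,y\rangle_X,
$$
so $p_mf_j\to\delta_1$ in $\sigma(X,X')$. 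Since each $p_mf_j$ lies in $\Span\{D_{p_j^k}f_j\}$, this places $\delta_1\in E_{f_j}$, whence $X_{00}\subset E_{f_j}$ and $E_{f_j}=X$.

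\textbf{Main obstacle.} The technical heart is in (2): translating the independent $\sigma(X_j,X_j')$-approximations in the one-variable slices into a genuine $\sigma(X,X')$-approximation of $F_N$ by $P_NBx$. Multiplicativity of the norm produces the necessary uniform norm bound, but to invoke Lemma 2.1 one must still exploit the tensor-like structure of the coefficients $\prod_j(p_j^{(\varepsilon_j)}f_j)_{\beta_j}\cdot x_\gamma$ to extract a lattice dominator of the form $C\cdot|x_\gamma|$ slice by slice — this is where multiplicativity of the norm does its essential work.
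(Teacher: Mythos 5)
Your part (1) is essentially the paper's argument (the paper phrases it as $R_\sigma(FQ)=R_\sigma(F)R_\sigma(Q)$ on the Bohr side, you phrase it as $R_\sigma D_n=D_nR_\sigma$ for $\supp\alpha(n)\subset\sigma$; these are the same fact), and your final step in (2), $F_N\to\delta_1$ dominated by $|x|$ via Lemma 2.1(1), is exactly the paper's. The genuine gap is the one you yourself flag as the ``main obstacle'' in (2), and it is not repairable along the route you propose. Forming the simultaneous product $P_N=\prod_{j=1}^Np_j^{(\varepsilon_j)}$ and trying to pass to a joint weak limit forces you either to invoke Lemma 2.1(2), whose hypothesis (norm density of $X'_{00}$ in $X'$) is \emph{not} assumed in 5.3(2), or to produce a coordinate dominator of the form $C|x_\gamma|$ for the coefficients of $P_NBx$ --- but no such dominator exists, since weak convergence $p_j^{(\varepsilon)}f_j\to 1$ gives no coordinatewise control of the coefficients of $p_j^{(\varepsilon)}f_j$ by any fixed lattice element. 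The paper sidesteps the joint limit entirely by peeling off one variable at a time: with $h=(\prod_{j=2}^Nf_j)F_N$ supported on the complementary variables, the multiplicative norm gives the exact identity $\|p_{1,n}F-h\|_X=\|p_{1,n}f_1-1\|_{X}\cdot\|h\|_X$, so $h\in BE_x$ by a one-variable limit, and iterating yields $F_N\in BE_x$; Lemma 2.1 is needed only for the last step $F_N\to 1$, where the dominator $|x|$ does exist. (Note the paper here reads the cyclicity of $f_1$ in $X_1$ as a norm approximation $\|p_{1,n}f_1-1\|\to0$; if you insist on the weak hypothesis, the sequential version still goes through because multiplication by the fixed disjoint-variable element $h$ is $\sigma$-to-$\sigma$ continuous when the norm is multiplicative, whereas your simultaneous version does not.)

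Part (3) also does not close as written. Lemma 4.6 requires \emph{both} factors to be cyclic and one of them to be a multiplier; in your decomposition $Bx=f_j\cdot\bigl(\prod_{i\ne j}f_i\bigr)F_N$ the second factor is a multiplier by hypothesis but is not known to be cyclic, so a single application of Lemma 4.6 proves nothing about $Bx$. Your reduction step --- that $z$-cyclicity of $f_j$ in $X_j$ lifts to $D_n$-cyclicity of $f_j$ in all of $X$, via the observation that $(y_{p_j^k})_{k\ge0}\in X_j'$ for every $y\in X'$ --- is correct and in fact slightly stronger than what the paper uses, but it must still be combined with the iteration of (2): using the multiplier bound $\|p_{1,n}F-(\prod_{j=2}^Nf_j)F_N\|_X\le\|p_{1,n}f_1-1\|_X\cdot\|(\prod_{j=2}^Nf_j)F_N\|_{\Mult(X)}$ one gets $F_N\in E_x$ for every $N$, and then the weak limit $F_N\to1$ finishes the proof. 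Those two steps are absent from your plan for (3).
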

 
\begin{proof}
(1) The restriction map $R_\sigma$ is multiplicative, 
$R_\sigma(FQ)=R_\sigma(F)R_\sigma(Q)$, and if $Q$ is a polynomial in 
$\zeta$, $R_\sigma Q$ is a polynomial in $\zeta_j$, $j\in\sigma$. If 
polynomials $Q_n$ are such that $\lim_n\|FQ_n-1\|_X=0$, $F=Bx$, then 
$\lim_n \|R_\sigma(F)R_\sigma(Q_n)-1\|_X=0$, and the principal claim 
follows.

Since $f_j=R_{\{j\}}F$, $F=Bx$, the stated cyclicity of $f_j$ is also proved. 

(2) In the notation of Lemma 5.1, 
$F(\zeta)=f_1(\zeta_1)\dots f_N(\zeta_N)F_N(\zeta)$, $F=Bx$, let 
$(p_{1,n})$ be a sequence of polynomials in $\zeta_1$ satisfying 
$\lim_n\|p_{1,n}f_1-1\|_X=0$. Then 
$$ 
\lim_n \left\|p_{1,n}F-\left(\prod_2^N f_j\right)F_N\right\|_X
= \lim_n \|p_{1,n}f_1-1\|_X \left\|\left(\prod_2^Nf_j\right)F_N\right\|_X = 0,
$$ 
and so $(\prod_2^Nf_j)F_N\in BE_x$. Repeating with other variables, we 
obtain $(\prod_m^Nf_j)F_N\in BE_x$ for all $m\leq N$, and hence 
$F_N\in BE_x$ for all $N$. 

Moreover, since $F_N=BR_\sigma x$, $\sigma=\{N+1,N+2,\dots\}$ and $X$ is 
a lattice, we have $\|F\|_X\geq\|F_N\|_X$. But all Fourier--Taylor 
coefficients $\hat{F}_N(\alpha)$ of $F_N$, excepting $\hat{F}_N(0)=1$, 
become zero starting from a certain $N$, so, by Lemma 2.1(1), 
$(\sigma)\lim_NF_N=1\in BE_x$. This implies the cyclicity of $x$: $BE_x=BX$. 

(3) The same reasoning as in (2), but with a use of the multiplier norm, 
gives  
$$
\lim_n \left\|p_{1,n}F-\left( \prod_2^Nf_j\right)F_N\right\|_X
\leq \lim_n \|p_{1,n}f_1-1\|_X 
  \left\|\left(\prod_2^Nf_j\right)F_N\right\|_{\Mult(X)}=0.
$$ 
This implies $F_N\in E_F$. The weak convergence $(\sigma)\lim_NF_N=1$ 
follows by the same reason as in (2) above.
\end{proof}
 
\begin{Comments}
{\bf (1)} 
Historically, the case $X= \ell^2$ was considered earlier at least twice. In 
this case $X_j(\DD)=H^2(\DD)$ (the standard Hardy space in the disc $\DD$) 
for every $j$, the norm of $X$ is multiplicative in the sense of above 
definition, and $\Mult(\ell^2)= H^\infty(\DD)$ (the algebra of bounded 
holomorphic functions in $\DD$). The main result of \cite{Har1947} shows 
the equivalence of $D_n$-cyclicity of multiplicative sequences to the 
cyclicity of functions $f_j$ in $H^2(\DD)$, but in a slightly weaker form 
than in points (2)--(3) above, assuming additionally the Wiener algebra 
condition $f_j,F_N\in \ell^1_A$. Our proof of Proposition 5.3 is much less 
involved than the Hartman's one. Paper \cite{Koz1948b} claims the 
discussed equivalence for $X=\ell^2$ in full generality (as it follows from 
5.3(2)), but the paper contains no proofs (and since then, they never 
appeared elsewhere; the paper also contains a number of misprints).  

{\bf (2)} 
Proposition 5.3 gives yet another proof to 3.1, the eleventh one\dots (but 
under the hypotheses of 5.3(2) on the space $X$, or the hypotheses of 
5.3(3) on the sequence $x$): indeed, for completely multiplicative 
$x\in X$, the functions $f_j$ are just rational fractions 
$f_j(z)=\frac{1}{1-\lambda z}$ whose cyclicity in any {\bf ISL} is 
immediate: $(1-\lambda z)f_j=1$.  

{\bf (3)} \emph{On the cyclicity property of functions of one variable in 
$\DD$}, such as functions $f_j$ above. In general, the property is still 
not yet well studied for many lattices $X$. The classical lattice cases 
are the following:

--- \emph{the Hardy space $X=H^2$} ($f$ is cyclic if and only if it is 
(Beurling) outer);

--- \emph{the Wiener algebra $X=\ell^1_A(\DD)$} (where $f$ is cyclic if and 
only if it is invertible, i.e. $f(z)\neq 0$, 
$\forall z\in\overline{\DD}$). Of course, the latter description is still 
true for any function Banach algebra for which $ \overline{\DD}$ is the 
space of its maximal ideals. 

In principle, it is not known whether there exists a lattice (or just a 
Banach space) $X$ of holomorphic functions such that 

{\it i)} $f\in X$ $\Rightarrow$ $zf\in X$, 

{\it ii)} the disc $\DD$ is its natural definition domain, i.e., 
$$
\DD=\{\lambda \in \CC : f\mapsto f(\lambda) \text{ is bounded on } X\} ,
$$
and 

{\it iii)} a function $f\in X$ is $z$-cyclic in $X$ if and only if 
$f(z)\neq 0$ ($\forall z\in\DD$). 

\noindent
The problem was discussed, for example, already in \cite{Nik1978}. 

For some particular lattices $X_A(\DD)$, there exists quite extensive 
literature on the cyclicity problem. Without any attempt to give an 
account on the known facts, let us mention a line of research started from 
A.~Beurling paper \cite{Beu1964}. Namely, following Beurling's ideas, the 
following problem is considered in \cite{Nik1974}: given a class of 
holomorphic functions in $\DD$, to find a ``minimal'' Banach space 
$X_A(\DD)$ where they are cyclic (weakly invertible). It is shown that 
given two ``regular'' ($\log$-convex, and more) weights $0<w_n\leq W_n$, 
$n\geq 0$, a necessary and sufficient condition for the cyclicity in 
$\ell^p_A(w_n)$ of every $f\in \ell^p_A(W_n)$, $f(z)\neq 0$ ($\forall z\in\DD)$ 
is $\sum_{n\geq 1}n^{-3/2}\log\frac{W_n}{w_n}=\infty$. For more recent 
results see \cite{BEH2014}.

 Many deep \emph{bi-cyclicity} results for ``Bergman-type'' and 
``Dirichlet-type'' spaces $\ell^p(\ZZ,w_n)$ with power-like weights are 
classical (A.~Beurling, L.~Carleson, R.~Salem, D.~Newman, L.~Brown,  
A.~Shields,\dots), many others are obtained quite recently. They deal with 
the size of the boundary zero sets of functions measured with capacities, 
Hausdorff dimensions and $h$-Hausdorff measures. Just for giving some 
sample examples and sources for further references, we mention 
\cite{BHV2004,EKR2006,BEH2014,LeM2019,LeM2018} (the latter text contains 
also a comprehensive description of a large part of previous results).

Some additional information on (simple) cyclicity of polynomials (and not 
only) and a general approach to the question can be found in Subsection 
6.10(2) below.
\end{Comments}
 
\section{Beurling's necessary condition, and a few cases where it is sufficient}
 
Let $X$ be an {\bf ISL}. Following the definition of $\DD_{X'}^\infty$, 
the functional $x\mapsto Bx(\lambda)$ is linear and continuous on $X$ for 
every $\lambda\in\DD_{X'}^\infty$, so that the following lemma is obvious. 
For the case $X=\ell^2$, the lemma is usually called 'Beurling's lemma', 
\cite{Beu1945}; in fact, it was known in the 1940s independently to many: 
to A.~Wintner, A.~Beurling, Ph.~Hartman, D.~Bourgin, V.~Ya.~Kozlov\dots 
 
\begin{Lemma}
If $x\in X$ is a $D_n$-cyclic element, then $Bx(\lambda)\neq 0$ for every 
$\lambda\in\DD_{X'}^\infty$.
\end{Lemma}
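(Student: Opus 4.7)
The plan is to argue by contradiction: suppose there were some $\lambda_0 \in \DD^\infty_{X'}$ with $Bx(\lambda_0)=0$, and derive that $E_x \neq X$, contradicting cyclicity. The key structural observation is that the point evaluation $\phi_{\lambda_0}(y) := By(\lambda_0) = \sum_{n\geq 1} y_n \lambda_0^{\alpha(n)}$ is nothing but the standard duality pairing $\langle y, v_{\lambda_0}\rangle$ with the sequence $v_{\lambda_0} = (\lambda_0^{\alpha(n)})_{n\geq 1}$. By the very definition of $\DD^\infty_{X'}$, this sequence belongs to $X'$, so $\phi_{\lambda_0}$ is a well-defined, $\sigma(X,X')$-continuous linear functional on $X$.

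Next I would check that $\phi_{\lambda_0}$ annihilates the orbit $\{D_n x : n \geq 1\}$. By Lemma 4.1 we have $B(D_n x)(\zeta) = \zeta^{\alpha(n)} Bx(\zeta)$, hence
$$
\phi_{\lambda_0}(D_n x) = \lambda_0^{\alpha(n)} \, Bx(\lambda_0) = 0
$$
for every $n$. Since $\phi_{\lambda_0}$ is linear and $\sigma(X,X')$-continuous, it must therefore vanish on the entire $\sigma(X,X')$-closed linear span $E_x = \Span_\sigma(D_n x : n\geq 1)$.

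If $x$ were $D_n$-cyclic we would have $E_x = X$, forcing $\phi_{\lambda_0}\equiv 0$ on $X$. But $\phi_{\lambda_0}(\delta_1) = B\delta_1(\lambda_0) = \lambda_0^{\alpha(1)} = \lambda_0^0 = 1 \neq 0$, a contradiction. The argument is short and essentially automatic once the definitions are unpacked; the only point warranting care is to use the \emph{weak} $\sigma(X,X')$-continuity of $\phi_{\lambda_0}$ (rather than mere norm continuity), since $E_x$ is defined as a $\sigma(X,X')$-closure and this topology can be strictly coarser than $\sigma(X,X^*)$ when $X' \subsetneq X^*$ (as for $X=\ell^\infty$).
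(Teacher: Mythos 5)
Your proof is correct and is exactly the argument the paper has in mind: it notes that $x\mapsto Bx(\lambda)$ is a continuous linear functional on $X$ for each $\lambda\in\DD^\infty_{X'}$ (precisely because $(\lambda^{\alpha(n)})_n\in X'$) and declares the lemma obvious from there. You have simply spelled out the annihilation-of-the-orbit step and the nonvanishing at $\delta_1$, including the correct attention to $\sigma(X,X')$-continuity, so nothing further is needed.
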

 
Even if the converse to the property of Lemma 6.1 is probably never true 
in the Banach space setting (see a remark in 5.4(3)), for some special 
elements $x\in X$, it however can be valid. Below, we show a few examples 
of such $ x$'s in case of some {\bf ISL}s $X$ of interest. 
 
\begin{Subsection}{$D_n$-cyclicity in $\ell^1(\NN)=\ell^1(\ZZ_+(\infty))$}
It is well-known and easy to check that 
$\ell^1_A(\overline{\DD}^\infty)=B\ell^1(\NN)$ is a Banach algebra (with the 
usual pointwise multiplication, which corresponds to the convolution of 
sequences on the additive semigroup $\ZZ_+(\infty)$). Its maximal ideals 
are defined by the point evaluation functionals on the closed infinite 
dimensional polydisc: $f\mapsto f(\lambda)$, 
$\lambda\in\overline{\DD}^\infty=\DD_{(\ell^1)'}^\infty$. This implies the 
following fact.
\end{Subsection}
 
\begin{Lemma}
An element $x\in \ell^1(\NN)$ is $D_n$-cyclic if and only if 
$Bx(\lambda)\neq0$, $\forall\lambda\in\overline{\DD}^\infty$. 
\end{Lemma}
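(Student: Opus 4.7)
The plan is to read this as a Wiener-type theorem: the forward implication is just Beurling's necessary condition, while the converse should follow from the fact that $\ell^1_A(\overline{\DD}^\infty) = B\ell^1(\NN)$ is a commutative Banach algebra whose Gelfand spectrum is precisely $\overline{\DD}^\infty$, as recalled in Subsection 6.2.

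For the necessity ($\Rightarrow$), I would simply invoke Lemma 6.1 with $X = \ell^1(\NN)$, noting that $\DD_{(\ell^1)'}^\infty = \overline{\DD}^\infty$. No further work is needed.

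For the sufficiency ($\Leftarrow$), suppose $Bx(\lambda) \neq 0$ for every $\lambda \in \overline{\DD}^\infty$. Since $\overline{\DD}^\infty$ is identified with the maximal ideal space of $\ell^1_A(\overline{\DD}^\infty)$ via point evaluation, the element $Bx$ lies outside every maximal ideal, so its Gelfand transform is nowhere zero; by the Gelfand theory of commutative Banach algebras, $Bx$ is invertible in $\ell^1_A(\overline{\DD}^\infty)$. Thus there exists $g \in \ell^1_A(\overline{\DD}^\infty)$ with $g \cdot Bx = 1$. Now the finitely supported sequences $X_{00}$ are norm dense in $\ell^1(\NN)$, which under $B$ means trigonometric polynomials in finitely many variables are norm dense in $\ell^1_A(\overline{\DD}^\infty)$. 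Pick polynomials $p_n$ with $\|p_n - g\|_{\ell^1} \to 0$. Using that multiplication is a bounded operation in the Banach algebra,
\[
\|p_n Bx - 1\|_{\ell^1} = \|(p_n - g) Bx\|_{\ell^1} \leq \|p_n - g\|_{\ell^1}\|Bx\|_{\ell^1} \longrightarrow 0,
\]
so $1 \in E_{Bx}$ (in the norm topology, which coincides with $\sigma(\ell^1,\ell^\infty)$-closure here since polynomial approximation is in norm). Since $E_{Bx}$ is $M_{\zeta^\alpha}$-invariant by construction, every monomial $\zeta^\alpha = M_{\zeta^\alpha} \cdot 1$ lies in $E_{Bx}$, and the closed linear span of monomials is all of $\ell^1_A(\overline{\DD}^\infty)$. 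By Lemma 4.2 this translates back to $D_n$-cyclicity of $x$ in $\ell^1(\NN)$.

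There is no real obstacle: everything reduces to the identification of the maximal ideal space in Subsection 6.2 plus the standard observation that cyclicity in a commutative Banach algebra with a cyclic vector for the identity follows from invertibility whenever finite polynomials are dense. The only point to double-check is that the polynomial density in $\ell^1$-norm genuinely converts Gelfand invertibility into $M_\zeta$-cyclicity, which is immediate from continuity of multiplication.
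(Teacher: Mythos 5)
Your proof is correct and follows essentially the same route as the paper: the necessity is Beurling's condition (Lemma 6.1), and the sufficiency uses the identification of the maximal ideal space of $\ell^1_A(\overline{\DD}^\infty)$ with $\overline{\DD}^\infty$ from Subsection 6.2 to invert $Bx$ in the algebra, then approximates the inverse in norm by polynomials and uses continuity of multiplication to get $1\in E_{Bx}$. The only cosmetic difference is that the paper writes $1/f\in\ell^1_A(\overline{\DD}^\infty)$ directly and concludes $E_{Bx}=\ell^1_A(\overline{\DD}^\infty)$ at once, whereas you spell out the Gelfand-theoretic step and the final passage from $1\in E_{Bx}$ to cyclicity via the monomials.
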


\begin{proof}
The necessity is already menioned above. For the sufficiency, assume 
$f:=Bx(\lambda)\neq 0$, $\forall\lambda\in\overline{\DD}^\infty$. Then 
$1/f\in \ell^1_A(\overline{\DD}^\infty)$, and so $1/f$ is the norm limit in 
$\ell^1_A(\overline{\DD}^\infty)$ of a sequence of polynomials $(p_n)$. Since 
$p_nf\in E_{Bx}$, we have $1=\lim_n(p_nf)\in E_{Bx}$, and so 
$E_{Bx}=\ell^1_A(\overline{\DD}^\infty)$.
\end{proof}
 
\begin{Subsection}{Free term dominating functions are cyclic}
Lemma 6.3 implies that a function $f\in \ell^1_A(\overline{\DD}^\infty)$ having  
$|\hat{f}(0)|>\sum_{\alpha\in\ZZ_+(\infty),\alpha\neq0}|\hat{f}(\alpha)|$ 
is cyclic in $\ell^1_A(\overline{\DD}^\infty)$, and so in any {\bf ISL} 
containing this algebra. Considering cyclicity for the weakened topology 
$\sigma (\ell^1,c_0)$, where $c_0=c_0(\ZZ_+(\infty))$ stands for the predual 
space of sequences tending to zero at infinity, $(c_0)^*=\ell^1$, or the 
cyclicity in larger {\bf ISL $X\supset \ell^1$} satisfying the conditions of 
Lemma 2.1(2), we can include the equality case in the domination 
condition.
\end{Subsection}
 
\begin{Proposition}
The following ``free term domination condition'' 
$$ 
|\hat{f}(0)| \geq \sum_{\alpha\in\ZZ_+(\infty),\alpha\neq0}|\hat{f}(\alpha)|, 
\quad f\neq 0,
$$ 
implies the $D_n$-$\sigma(X,X')$-cyclicity of $f\neq 0$ in any {\bf ISL 
$X$} from Lemma 2.1(2) containing $\ell^1(\ZZ_+(\infty))$ (and the 
$D_n$-$\sigma (\ell^1,c_0)$-cyclicity in $\ell^1$ itself).

In particular, the conclusion holds for any polynomial $f$ with a 
dominating free term.
\end{Proposition}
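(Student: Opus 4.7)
The strategy is a Neumann-series construction of approximate inverses to $f$. Normalize $\hat f(0)=1$ and put $g:=1-f$; the hypothesis becomes $\|g\|_{\ell^1(\ZZ_+(\infty))}\leq 1$ with $\hat g(0)=0$. In the Banach algebra $\ell^1_A(\overline\DD^\infty)$ the partial sums $p_n:=\sum_{k=0}^{n}g^{k}$ satisfy the telescoping identity $p_n f=1-g^{n+1}$. I would then show \emph{(a)} $p_n f\in E_f$ for every $n$, and \emph{(b)} $g^{n+1}\to 0$ in $\sigma(X,X')$ (or in $\sigma(\ell^1,c_0)$ when $X=\ell^1$). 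Granted these, $1=\lim_n p_n f\in E_f$, and since $E_f$ is $D_n$-invariant, $\zeta^\alpha\in E_f$ for every $\alpha\in\ZZ_+(\infty)$; combined with the $\sigma(X,X')$-density of $X_{00}$ recalled in Section 1, this forces $E_f=X$.

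For \emph{(a)}, the domination hypothesis gives $\|f\|_{\ell^1}\leq 2|\hat f(0)|$, so multiplication by $f$ is $\ell^1$-continuous by Young's inequality. Although $p_n\in\ell^1_A$ is not a polynomial, its Taylor truncations $q_{n,m}$ converge to $p_n$ in $\ell^1$-norm, whence $q_{n,m}f\to p_n f$ in $\ell^1$; the continuous inclusion $\ell^1\hookrightarrow X$ (closed graph theorem, since $X$ is an {\bf ISL} containing $\ell^1$) promotes this to convergence in $X$-norm and therefore in $\sigma(X,X')$. Each $q_{n,m}f$ is a finite linear combination of monomial multiples $\zeta^\alpha f$, i.e.\ of the Bohr images of the $D_k x$, so $p_n f$ belongs to $E_f$. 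For \emph{(b)}, the same embedding gives $\|g^{n+1}\|_X\leq C\|g\|_{\ell^1}^{n+1}\leq C$. Coordinatewise, the Cauchy product expansion $\widehat{g^{n+1}}(\alpha)=\sum\hat g(\alpha_1)\cdots\hat g(\alpha_{n+1})$ runs over decompositions $\alpha_1+\cdots+\alpha_{n+1}=\alpha$ in $\ZZ_+(\infty)$; since $\hat g(0)=0$, every $\alpha_i$ must be nonzero, forcing $|\alpha|\geq n+1$. Hence $\widehat{g^{n+1}}(\alpha)=0$ as soon as $n\geq|\alpha|$, and Lemma 2.1(2) (or its analogue for $\sigma(\ell^1,c_0)$, describing convergence by norm-boundedness plus coordinate-wise vanishing) delivers the weak convergence.

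The only delicate point is \emph{(a)}: $p_n$ is not a polynomial, so the identity $p_n f=1-g^{n+1}$ valid in $\ell^1_A$ is not a priori a statement inside $E_f$. Promoting it there is exactly what the Banach-algebra structure of $\ell^1_A(\overline\DD^\infty)$ and the continuous embedding $\ell^1\hookrightarrow X$ are designed to provide; once \emph{(a)} is in hand, the weak-topology endgame \emph{(b)} is automatic from Lemma 2.1(2). Note that this single argument also covers the strict case subsumed by Lemma 6.3, so no separate treatment is needed.
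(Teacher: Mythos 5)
Your argument is correct, and it rests on the same two pillars as the paper's proof: an approximate inverse of $f$ built inside the Banach algebra $\ell^1_A(\overline{\DD}^\infty)$ (whose elements are $\ell^1$-limits of polynomials, which -- via the continuous embedding $\ell^1\hookrightarrow X$ -- places the resulting products in $E_f$ exactly as in Lemma 6.3), followed by the ``norm-bounded plus coordinatewise convergent'' criterion of Lemma 2.1(2). The regularization itself is genuinely different. The paper uses the Abel-type means $f_r=1+r(f-1)$, for which $1/f_r=\sum_{k\geq 0}r^k g^k\in\ell^1_A$ because $\|r(f-1)\|_1\leq r<1$, and derives the uniform bound $\|f/f_r\|_1\leq 2$ before letting $r\to 1$; you take the partial sums $p_n=\sum_{k=0}^{n}g^k$ of the same Neumann series and exploit the telescoping identity $p_nf=1-g^{n+1}$. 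Your version buys a cleaner endgame: since $\hat g(0)=0$, every decomposition $\alpha_1+\dots+\alpha_{n+1}=\alpha$ contributing to $\widehat{g^{n+1}}(\alpha)$ forces $|\alpha|\geq n+1$, so $\widehat{g^{n+1}}(\alpha)$ vanishes identically once $n\geq|\alpha|$ and the coordinatewise convergence demanded by Lemma 2.1(2) is exact rather than asymptotic, whereas in the Abel version one must still verify separately that $(f/f_r)^{\wedge}(\alpha)\to\delta_{0,\alpha}$ as $r\to 1$ (true, but requiring an extra observation that the coefficients $(1/f_r)^{\wedge}(\gamma)$ stay bounded for each fixed $\gamma$). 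The norm bounds are equally trivial on both sides ($\|g^{n+1}\|_1\leq\|g\|_1^{n+1}\leq 1$ versus $\|f/f_r\|_1\leq 2$), and your handling of the only delicate point -- that $p_n$ is not a polynomial, so $p_nf\in E_f$ must be secured by $\ell^1$-approximation of $p_n$ by its truncations together with the boundedness of multiplication by $f$ on $\ell^1$ and the closed-graph continuity of $\ell^1\hookrightarrow X$ -- is precisely the mechanism the paper relies on when it writes $f/f_r\in E_f^{\ell^1}\subset E_f^{X}$.
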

 
\begin{proof}
Multiplying by a constant, we can always think that $\hat{f}(0)=1$. Let 
$0<r<1$ and $f_r=1+r(f-1)$, $0<r<1$. Then 
$1/f_r\in \ell^1_A((\overline{\DD}^\infty)$, and  
$$
\frac{f}{f_r} = 1+\frac{f-f_r}{f_r}.
$$ 
Moreover, 
$\|f-f_r\|_1=(1-r)\sum_{\alpha\in\ZZ_+(\infty),\alpha\neq 0}|\hat{f}(\alpha)| 
\leq 1-r$, and 
$$ 
f_r = 1+r(f-1), \text{ where } \|r(f-1)\|_1\leq r< 1.
$$ 
Hence, 
$$ 
\|1/f_r\|_1 \leq \frac{1}{1-r}, \text{ and }
\left\|\frac{f}{f_r}\right\|_1 \leq 1+\left\|(f-f_r)\frac{1}{f_r}\right\|_1 \leq 2 
\quad (0<r<1).
$$ 
Moreover, $f/f_r\in E_f^{\ell^1}\subset E_f^X$, and by Lemma 2.1 
$1=(\sigma)\lim_{r\to 1}\frac{f}{f_r}\in E^X_f$ (and the same for 
$\sigma(\ell^1,c_0)$ convergence in $\ell^1$), and so $E_f^X=X$, and $f$ is 
$\sigma(X,X')$-cyclic.  
\end{proof}

\begin{Comments}
{\bf (1)}
As we will see, for a special class of polynomials in $\CC^N$ (factorable 
polynomials of theorem 6.9 below), the cyclicity holds under the only 
condition $f(\zeta)\neq 0$, $\zeta\in\DD^N$ in any {\bf ISL $X_A$} 
containing $\ell^1_A$ and satisfying conditions of Lemma 2.1(2). Since every 
polynomial \emph{in one variable} is linearly factorable, we keep that 
claim for $z$-cyclicity of any polynomial in this kind {\bf ISL}'s in 
$\DD$.

As to generic \emph{polynomials in several variables}, the situation is 
more involved; we discuss it briefly in Comments 6.10 below. 

{\bf (2)} 
Proposition 6.5 is applied to $\ell_A^p$, $1<p<\infty$, and to any reflexive 
{\bf ISL $X_A\supset \ell^1_A$} where $X_{00}$ is dense. For $p=\infty$ and 
$p=1$, it concerns the cyclicity for weak$^{*}$ topologies 
$\sigma(\ell^\infty,\ell^1)$, resp. $\sigma(\ell^1,c_0)$ (but not 
$\sigma(\ell^1,\ell^\infty)$; look on $f=1-z$). 
\end{Comments}
 
\begin{Subsection}{Linearly factorable polynomials are cyclic if 
$\DD_{X'}^N=\DD^N$}
Whatever is an {\bf ISL $X$}, it contains the linear subset $X_{00}$ 
consisting of all finitely supported sequences, and its Bohr transform 
$BX_{00}$ is the set of all polynomials in $\CC^\infty$. Every polynomial 
$p=Bx$, $x\in X_{00}$, $p(\zeta)=p(\zeta_1,\zeta_2,\dots)
=\sum_{n=1}^Nx_n\zeta^{\alpha(n)}$, depends on a finite number of 
variables $\zeta_j$, and one can consider the minimal set of such 
essential variables 
$$
J(x)=\bigcup_{n,x_n\neq 0}\supp(\alpha(n)).
$$ 
Given a subset of indices $J\subset\NN$, consider the restriction $BX^J$ 
of the {\bf ISL $BX$} to the variables $\zeta_j$, $j\in J$, namely, 
$$ 
BX^J = \left\{ f\in BX : 
f = \!\!\!\!\!\sum_{\alpha\in\ZZ_+(\infty)}\!\!\!\!\hat{f}(\alpha)\zeta^\alpha, 
\supp(\hat{f}(\alpha))\subset (\alpha\in\ZZ_+(\infty): 
\supp(\alpha)\subset J)\right\}.
$$ 
It was observed in \cite{Nik2012}, that for $X=\ell^2$, a polynomial $Bx$ is 
cyclic in $B\ell^2$ (which is the Hardy space on $\DD_2^\infty$) if and only 
if it is (already) cyclic in $(B\ell^2)^J$, where $J$ is any (finite) subset 
of $\NN$, $J(x)\subset J$; if $J=[1,N]$, $(B\ell^2)^J=H^2(\DD^N)$. The same 
reasoning as in \cite{Nik2012} shows the following lemma.
\end{Subsection}
 
\begin{Lemma}
A polynomial $Bx$ is $M_\zeta$ cyclic in $BX$ if and only if it is 
(already) cyclic in $(BX)^J$, where $J$ is any (finite) subset of $\NN$, 
$J(x)\subset J$.
\end{Lemma}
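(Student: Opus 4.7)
The plan is, via the restriction operation $R_J$ of Section~5, to transfer cyclicity between $BX$ and its sublattice $(BX)^J$. Write $p:=Bx$. Since $p$ depends only on the variables $\zeta_j$, $j\in J(x)\subset J$, one has $R_J p=p$, and for every polynomial $q$, $R_J(qp)=R_J(q)\cdot p$ (because the Fourier support of $qp$ lies in $J$ precisely when the support of $q$ does).

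For the necessity, suppose $p$ is $M_\zeta$-cyclic in $BX$, so polynomials $q_n$ satisfy $q_n p\to 1$ in $\sigma(BX,(BX)')$. The restriction $R_J$ is a contractive coordinate projection on $X$ whose adjoint is the natural embedding $((BX)^J)'\hookrightarrow(BX)'$, hence $R_J$ is weakly continuous. Applying it gives $R_J(q_n)\cdot p\to R_J(1)=1$ in $\sigma((BX)^J,((BX)^J)')$, with each $R_J(q_n)$ a polynomial in the variables indexed by $J$; thus $p$ is cyclic in $(BX)^J$.

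For the sufficiency, suppose polynomials $q_n$ in the variables $\zeta_j$, $j\in J$, satisfy $q_n p\to 1$ in $\sigma((BX)^J,((BX)^J)')$. Every $y\in(BX)'$ restricts to an element of $((BX)^J)'$, so the same convergence holds in $\sigma(BX,(BX)')$, which shows $1\in E_p^{BX}$. By Lemma~4.4, $E_p^{BX}$ is $M_\zeta$-invariant, so it contains $M_{\zeta^\alpha}\cdot 1=\zeta^\alpha$ for every $\alpha\in\ZZ_+(\infty)$, and therefore every polynomial. Since $BX_{00}$ is $\sigma(BX,(BX)')$-dense in $BX$ (as remarked at the end of Section~1), it follows that $E_p^{BX}=BX$.

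The only delicate point is the compatibility of the weak topologies under the restriction/embedding between $BX$ and $(BX)^J$; once that bookkeeping is settled, no genuine obstacle arises, and the lemma is essentially a reorganisation of Lemma~4.4 for polynomial symbols.
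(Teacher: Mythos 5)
Your proof is correct and follows essentially the route the paper intends: the paper gives no explicit proof of Lemma 6.8 (it defers to the ``same reasoning as in [Nik2012]''), but that reasoning is precisely the multiplicative restriction map $R_J$ with $R_J(FQ)=R_J(F)R_J(Q)$, spelled out in the paper's own proof of Proposition 5.3(1). Your additional bookkeeping --- that $R_J$ is $\sigma(X,X')$-to-$\sigma(R_JX,(R_JX)')$ continuous via zero-extension of Köthe-dual elements, and the converse embedding of duals --- is exactly the verification the paper leaves implicit.
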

 
Applying Lemma 6.8 with Lemma 6.1, we see that if a polynomial $p=Bx$ is 
cyclic in $(BX)^J$, then $p(\zeta)\neq 0$ for $\zeta\in\DD_{X'}^J$, where  
$$ 
\DD_{X'}^J := \DD^\infty_{X'} \cap \CC^J.
$$ 
If $X$ is an {\bf N-ISL}, for every finite set $J$,
$\DD^J\subset\DD_{X'}^J\subset\overline{\DD}^J$ (but for a weighted 
$X_{w_n}$ these finite dimensional sections can be different from the 
polydiscs).

A ``polynomial converse'' to above Beurling's necessary condition 6.1 claims 
that, given a polynomial $p=Bx$, $(Bx(\lambda)\neq 0$, 
$\forall\lambda\in\DD_{X'}^J)$ $\Rightarrow$ ($x$ is $D_n$-cyclic). 
Historically, this property was first claimed \emph{for the Hilbert space} 
$X=\ell^2(\NN)$ (for which $\DD_{X'}^{[1,N]}=\DD^N$) by V.~Ya.~Kozlov in 
\cite{Koz1948a}; however, the first published proof appeared only 20 years 
later, in \cite{NGN1970} of J.~Neuwirth, J.~Ginsberg, and D.~Newman. For a 
simplified proof and generalizations, see \cite{Nik2012}, \cite{Nik2019} 
(also, \cite{Nik2018b} for details of Kozlov's personal story).

It follows (of course) that the condition $p(\lambda)\neq 0$, 
$\forall\lambda\in\DD^J$ is still necessary and sufficient for cyclicity 
in every larger {\bf ISL $X_A\supset \ell_A^2$}.

In the next Proposition, we consider {\bf ISL}'s $X$ containing $\ell^1$, so 
that  
$$ 
\DD_{X'}^{[1,N]}\subset\overline{\DD}^N, \quad \forall N\geq 1,
$$ 
and will restrict ourselves to {\bf ISL}'s $X= X(\ZZ_+^N)$ on $\ZZ_+^N$. 
The Bohr transforms $Bx$ of finitely supported sequences $x\in X_{00}$ 
(i.e., polynomials), in principle, are defined on the whole $\CC^N$; as 
above we use the notation  
$$ 
X_A(\DD^N):= BX(\ZZ_+^N).
$$ 
A polynomial $f$ in $\CC^N$ is said \emph{linearly factorable} if 
$$ 
f=\prod_{j=1}^nf_j,
$$ 
where $f_j$ are affine linear functions, 
$f_j(\zeta)=f_j(\zeta_1,\dots,\zeta_N) := a + \sum_{i=1}^Na_i\zeta_i$ 
($a,a_i\in\CC$). If $f$ is linearly factorable then the zero set of $f$ 
in $\CC^N$ is a finite union of (complex) hyperplanes. For $N=1$, every 
polynomial is linearly factorable, but for $N\geq2$ this is already not 
the case (for example, 
$f(\zeta)=h(\zeta_1,\dots\zeta_{N-1})+\lambda\zeta_N$is not factorable, if 
$\lambda\neq0$, $h$ a polynomial of degree $\geq2$). 
 
\begin{Proposition}
Let $X(\ZZ_+^N)$ be an {\bf ISL} containing $\ell^1(\ZZ_+^N)$, and $f$ a 
linearly factorable polynomial in $\CC^N$, such that $f(\zeta)\neq 0$ 
for $\zeta\in\DD^N$. Then $f$ is cyclic in $X_A(\DD^N)$ (for weak 
$\sigma(X,X')$ topology).

In particular, for $N=1$, every polynomial with $f(\zeta)\neq 0$, 
$\zeta\in\DD$ is cyclic in any {\bf ISL} of the above type (for example, 
it is norm cyclic in any $\ell_A^p(\DD)$, $1<p<\infty$, and, generally, in 
any reflexive {\bf ISL} containing $\ell^1$ (think on $X=\ell^p(\ell^q)$ type 
spaces)).  
\end{Proposition}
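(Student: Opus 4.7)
The plan is to decompose $f=\prod_{j=1}^n f_j$ into its affine linear factors, apply Proposition~6.5 to each factor, and then recombine via the multiplier-cyclicity principle of Lemma~4.6. Since $f$ does not vanish on $\DD^N$, none of the $f_j$ does either. Nonzero constant factors are trivially cyclic multipliers, so we may assume each $f_j$ is nonconstant, and since $f_j(0)\neq 0$, after scaling by a nonzero constant we write
$$
f_j(\zeta)=1+\sum_{i=1}^N a_{j,i}\zeta_i.
$$
The linear form $g_j(\zeta):=\sum_i a_{j,i}\zeta_i$ sends $\DD^N$ onto the open disc $\{w\in\CC:|w|<\sum_i|a_{j,i}|\}$ (to hit a prescribed $w$, set $\zeta_i=(|w|/\sum_k|a_{j,k}|)\,e^{i(\arg w-\arg a_{j,i})}$), so the non-vanishing of $f_j=1+g_j$ on $\DD^N$ is equivalent to $\sum_i|a_{j,i}|\leq 1$. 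Each $f_j$ therefore meets the weakly dominating free-term hypothesis of Proposition~6.5 and is $\sigma(X,X')$-cyclic in $X_A(\DD^N)$.

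Next I multiply the factors together. Every $f_j$ is a polynomial, hence lies in $\Mult(X_A)$. The weak-topology analogue of Lemma~4.6 reads: if $F,G\in X_A$ are both $\sigma(X,X')$-cyclic and $G\in\Mult(X_A)$, then $FG$ is cyclic. Indeed, choose polynomials $p_\alpha$ with $p_\alpha F\to 1$ in $\sigma(X,X')$; since $M_G$ is $\sigma(X,X')$-continuous, $p_\alpha(FG)=G\cdot p_\alpha F\to G$ weakly, so $G\in E_{FG}$. By weak closedness and $M_\zeta$-invariance of $E_{FG}$ one then has $qG\in E_{FG}$ for every polynomial $q$, hence $E_G\subseteq E_{FG}=X_A(\DD^N)$. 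Applying this iteratively with $F=f_1\cdots f_{k-1}$ (still a cyclic polynomial multiplier) and $G=f_k$ yields the cyclicity of $f$.

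The principal point to verify is the weak version of Lemma~4.6, i.e.\ the $\sigma(X,X')$-continuity of multiplication by a polynomial on $X_A$. This reduces to the boundedness of each shift $S_\beta$ (equivalently each dilation $D_n$) on the dual lattice $X'$, which is automatic: $S_\beta$ is bounded on $X$ by the ISL axioms, and under the $X\times X'$ pairing its adjoint is again a shift on $X'$, bounded by duality. Once this weak continuity is granted, the inductive multiplication step and the $M_\zeta$-invariance of the weak closure $E_{FG}$ both go through without issue.
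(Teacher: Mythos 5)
Your proof is correct and follows essentially the same route as the paper: factor $f$ into affine linear pieces, observe that non-vanishing on $\DD^N$ forces the free-term domination $|a|\geq\sum_{i}|a_i|$, invoke Proposition 6.5 for each factor, and recombine via Lemma 4.6. Your additional step --- checking that multiplication by a polynomial is $\sigma(X,X')$-continuous (via the adjoint shift acting on $X'$), so that Lemma 4.6, which the paper states and proves only for norm convergence, genuinely applies to the weak cyclicity delivered by Proposition 6.5 --- is a detail the paper passes over silently, and it is a worthwhile refinement rather than a deviation.
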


\begin{proof}
Let $f=\prod_{j=1}^nf_j$, where $ f_j$ are affine linear in $\CC^N$. With 
Lemma 4.6, it suffices to show that every $f_j$ is cyclic in $X_A$. Since 
$f_j$ is an affine linear polynomial, 
$f_j(\zeta)=a+\sum_{i=1}^Na_i\zeta_i$, and $f_j(\zeta)\neq 0$ for 
$\zeta\in\DD^N$, it follows $|a|\geq\sum_{i=1}^N|a_i|$. So $f_j$ is 
$(\sigma)$-cyclic in $ X$ by Proposition 6.5.
\end{proof}

\begin{Rem} 
We return later on (Section 8) to the cyclicity of factorable polynomials 
in some {\bf ISL}'s $X(\ZZ_+^N)$ with $\DD_{X'}^N=\BB^N$ (the unit ball in 
$\CC^N$).
\end{Rem} 
 
\begin{Comments}
{\bf (1)}
By the same reason as in the last lines, one can see that \emph{a sequence 
$x=(x_k)\in X$ supported by the set of prime integers and such that 
$Bx(\zeta)\neq 0$ for $\zeta\in\DD^N$, $\forall N\geq 1$ satisfies 
$|x_1|\geq\sum_{k\geq 2}|x_k|$, and hence $x$ is $D_n$-cyclic in $X$, an 
{\bf ISL} from Proposition} 6.5.
\end{Comments}

{\bf (2)} \emph{Cyclicity of polynomials in a single variable}. This 
elementary case plays a special role due to Proposition 5.3. Here, some 
(folklore) details are presented since it appears impossible to find 
written references. 

Let $X$ be a shift invariant sequence Banach space,  
$$ 
X_A = \left\{ f = \sum_{n\geq 0}\hat{f}(n)z^n, 
  (\hat{f}(n))_{n\geq 0}\in X\right\}, 
$$ 
where the set of all polynomials is contained and dense in $X_A$. Keeping 
the above notation, denote 
$$ 
\DD_{X'} = \{ \lambda\in\CC : \varphi_\lambda\colon f\mapsto f(\lambda) 
\text{ is bounded on polynomials in } X_A\}
$$ 
the domain of definition of $X_A$. 
 
\begin{Claim}
Under the above hypothesis, a polynomial $f$ is $z$-cyclic in $X_A$ if and 
only if $f(\lambda)\neq 0$, $\lambda\in\DD_{X'}$. 
\end{Claim}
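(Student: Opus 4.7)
The necessity is the one-variable incarnation of Lemma 6.1: if $p_n f \to 1$ in $X_A$ and $\lambda \in \DD_{X'}$, continuity of $\varphi_\lambda$ on polynomials (extended by density to $X_A$) yields $p_n(\lambda) f(\lambda) \to 1$, forcing $f(\lambda) \neq 0$.

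For sufficiency I would factor $f(z) = c\prod_{j=1}^{d}(z-\lambda_j)$ over $\CC$; by hypothesis each root $\lambda_j$ lies outside $\DD_{X'}$. Shift invariance of $X$ means $M_z$ is bounded on $X_A$, and hence every polynomial acts as a bounded multiplier on $X_A$. Granted cyclicity of each linear factor, Lemma 4.6 applied inductively delivers the cyclicity of the product $f$, so the problem reduces to a single linear factor $z - \lambda$ with $\lambda \notin \DD_{X'}$.

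The heart of the proof is to convert the unboundedness of $\varphi_\lambda$ on polynomials into an explicit approximation of $1$ by multiples of $z - \lambda$. Since $\lambda \notin \DD_{X'}$, there exist polynomials $p_n$ with $\|p_n\|_{X_A} \leq 1$ and $|p_n(\lambda)| \to \infty$. The polynomial $q_n := p_n - p_n(\lambda)$ vanishes at $\lambda$ and therefore factors algebraically as $q_n(z) = (z - \lambda) g_n(z)$ with $g_n \in \CC[z]$. Dividing by $-p_n(\lambda)$,
$$
(z-\lambda) \cdot \frac{-g_n}{p_n(\lambda)} \;=\; 1 - \frac{p_n}{p_n(\lambda)} \;\longrightarrow\; 1 \quad \text{in } X_A,
$$
since $\|p_n/p_n(\lambda)\|_{X_A} \leq 1/|p_n(\lambda)| \to 0$. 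Thus $1 \in E_{z-\lambda}$, and density of polynomials in $X_A$ promotes this to $E_{z-\lambda} = X_A$.

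I do not anticipate a genuine obstacle. The engine of the argument is the one-variable algebraic identity $p - p(\lambda) = (z-\lambda)\,g$ with $g$ automatically polynomial, together with the observation that $\lambda \notin \DD_{X'}$ is \emph{exactly} the information needed to produce the approximating polynomials $p_n$. This trick fails in several variables, since the zero locus of a polynomial is no longer carved out by a single linear factor, which is the structural reason Proposition 6.9 had to impose linear factorability as a separate hypothesis.
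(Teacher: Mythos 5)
Your proof is correct and follows essentially the same route as the paper: necessity via boundedness of $\varphi_\lambda$ on $\DD_{X'}$, reduction to a single linear factor via factorization and the multiplier Lemma 4.6, and cyclicity of $z-\lambda$ from the unboundedness of $\varphi_\lambda$ for $\lambda\notin\DD_{X'}$. Your explicit sequence $1-p_n/p_n(\lambda)=(z-\lambda)\bigl(-g_n/p_n(\lambda)\bigr)$ is just the unpacked form of the paper's one-line appeal to the density of the kernel of an unbounded linear functional, so there is nothing to add.
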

 
Indeed, if $f(\lambda)=0$ for some $\lambda\in\DD_{X'}$, $f$ is clearly 
non cyclic. Conversely, if $f(z)\neq 0$, $z\in\DD_{X'}$, $f$ is a finite 
product $f=c\prod(z-\lambda)$, $c\neq 0$ of factors $z-\lambda$, 
$\lambda\in\CC\setminus\DD_{X'}$ (or $f=const=c\neq 0$). Since 
$\varphi_\lambda$, $\lambda\in\CC\setminus\DD_{X'}$ is unbounded on 
polynomials, the kernel 
$\Ker(\varphi_\lambda)=\{p : p \text{ is a polynomial}, p(\lambda)= 0\}$ 
is dense in $X_A$. The $z$-cyclicity of $z-\lambda$ follows. Since 
$f=(z-\lambda)q(z)$ and the polynomial $q$ is (obviously) a multiplier of 
$X_A$, $q\in E_f$ (see Lemma 4.6). Repeating the reasoning with the 
resting zeroes of $ f$, we get $c\in E_f$ and so $E_f=X_A$. This means 
that $f$ is cyclic.  
$\hfill\square$
 
\begin{Example} 
Let $X$ be a shift invariant sequence lattice on $\ZZ_+$. Then there 
exists $R$, $0\leq R\leq\infty$, such that 
$\DD_{X'}=\{\lambda\in\CC : (\lambda^n)_{n\geq 0}\in X'\}$ is either 
$\DD_R=\{\lambda\in\CC : |\lambda|<R\}$ or 
$\overline{\DD}_R=\{\lambda\in\CC : |\lambda|\leq R\}$. If $R=\infty$, the 
only cyclic polynomial is $f=const\neq 0$. If $R=0$, every polynomial $f$ 
with $f(0)\neq 0$ is cyclic.  

If $X=\ell^p(\ZZ_+,w_n)$, $w_n>0$, we have $R=\varlimsup_n(w_n^{1/n})$ (and 
$\DD_{X'}=\overline{\DD}_R$ $\Leftrightarrow$ $(R^n/w_n)\in \ell^{p'}$, 
$1/p+1/p'=1$). In particular, for Dirichlet-type weights 
$w_n=(1+n)^\gamma$, $\DD_{X'}=\DD$ for $\gamma\leq 1/p'$, and 
$\DD_{X'}=\overline{\DD}$ for $\gamma>1/p'$. By the way, in the latter 
case, $X_A$ is a Banach algebra having $\overline{\DD}$ as the space of 
maximal ideals (see \cite{ENZ1999}), and so the cyclicity criterion is 
$f(\lambda)\neq 0$, $\lambda\in\overline{\DD}$ for any function $f\in X$, 
not only for polynomials.
\end{Example} 
 
{\bf (3)}
\emph{$z$-cyclicity problem for generic functions in one variable}. 
It is far from to be trivial and still mostly not solved, even for the 
Dirichlet-type lattices $X=\ell^p(\ZZ_+,w_n)$, $w_n=(1+n)^\gamma$, despite 
many known deep results. For these spaces, the question is in the size of 
the boundary zero set $Z(f)=\{\lambda\in\TT : f(\lambda)=0\}$ measured 
with the corresponding capacity (as it is the case for bi-cyclic sequences 
on $\ZZ$), see Comments 5.4(3). The use of the potential theory forces to 
work mostly with the Hilbert space cases $\ell^2(\ZZ_+,w_n)$. See for 
example, a recent paper \cite{KLeMZ2020} where many other researches are 
quoted. Notice that usually the $z$-cyclicity questions in the framework 
of holomorphic spaces $X_A$ are more involved than the ``real-analysis'' 
problems for bi-cyclicity on $\ZZ$, but not always --- for example, a deep 
construction of N.~Lev--A.~Olevskii (2011) of two $\ell^1(\ZZ)$-functions $f$ 
and $g$ with the same zero sets on $\TT$, $Z(f)=Z(g)$, one of which is 
$\ell^p(\ZZ)$-bicyclic ($1<p<2$), and the second is not, contrasts with a 
rather obvious such example for analytic $z$-cyclicity: $f=(1-z)^N$ is 
$z$-cyclic in all $\ell^p(\ZZ_+)_A$ ($1<p\leq\infty$), and 
$g=(1-z)^N\exp\frac{z+1}{z-1}$ is not in any $\ell^p(\ZZ_+)_A$ 
($1<p\leq\infty$) (known already from an ``abstract Keldysh approach'' of 
\cite{Nik1974}), but $Z(f)=Z(g)=\{1\}$.   

{\bf (4)} \emph{Polynomials in several variables}. 
The matter is about the $\zeta^\alpha$-cyclicity in spaces $X(\ZZ_+^N)_A$  
in $\DD^N_{X'}$ where $X$ is a lattice. Usually, $X$ is a ``Dirichlet-type'' 
space $X=\ell^p_{w_\alpha}(\ZZ_+^N)$. The ``Beurling necessary condition'' for 
$\zeta^\alpha$-cyclicity (if $F$ is cyclic then $F(\zeta)\neq 0$ for 
$\zeta\in\DD^N_{X'}$) is still the departure point. For the moment, all 
known results are either for the case $\DD^N_{X'}=\DD^N$ or 
$\DD^N_{X'}=\BB^N$, the unit ball of $\CC^N$: 
$\BB^N=\{\zeta\in\CC^N : \sum_1^N|\zeta_j|^2<1\}$. 

\emph{The case when $\DD^N_{X'}=\DD^N$}.
As is mentioned above, for the Hardy space $X_A=\ell^2(\ZZ_+^N)_A$ in the 
polydisc $\DD^N$, the sufficiency of Beurling's condition for polynomials 
($F$ does not vanish in $\DD^N \Rightarrow F$ is cyclic) was claimed by 
V.~Ya.~Kozlov \cite{Koz1948a} and proved in \cite{NGN1970} (then 
independently, in \cite{HLS1997} and \cite{Nik2012}, where two proofs are 
given, including the one based on a Lojasiewicz inequality).   

A power-like weighted Dirichlet-type space $X=\ell^p_{w_\alpha}(\ZZ_+^N)$, 
$$ 
w_\alpha=\alpha^s=:(1+\alpha_1)^{s_1}\dots(1+\alpha_N)^{s_N}, \quad 
\alpha\in\ZZ_+^N,
$$ 
is a Banach algebra with $\overline{\DD}^N$ as the maximal ideal space if 
and (for $p>1$) only if $s_j>1/p'$ for every $j$, $1\leq j\leq N$ (for 
$p=1$, it is an algebra for $s_j\geq 0$), see 7.1 below. Therefore, in 
this case, the cyclicity criterion for $F\in X_A$, 
$X=\ell^p_{w_\alpha}(\ZZ_+^N)$ is $F(\zeta)\neq 0$, 
$\zeta\in\overline{\DD}^N$ (not only for polynomials). 

For Dirichlet-type spaces which are not algebras, the question is 
practically settled for \emph{the case of bidisc $\DD^2$} ($N=2$) and 
always for $p=2$ by G.~Knese, L.~Kosinski, T.~Ransford, A.~Sola, see 
\cite{KKRS2019} and references therein. The answer depends on the set of 
boundary zeroes $Z(F)\bigcap\TT^2$ and says, roughly speaking, the 
following: a polynomial $F$, $F(\zeta)\neq 0$ ($\forall\zeta\in\DD^N$) is 
cyclic if $Z(F)\cap\TT^2$ is finite , and non cyclic if it is infinite 
(with some restrictions on the exponents $s_j$). For the \emph{case of 
tridisc $\DD^3$} ($N=3$), for the moment, the answers are already partial 
and depend on the dimension of $Z(F)\cap\TT^3$ and its ``orientation'', see 
\cite{Ber2018} where some observations on the general case of $\DD^N$, 
$N>2$ are also given (see also 7.2, 7.3 below). In particular, some 
results (with rather technical statements) depend on geometric 
characteristics as non-vanishing Gaussian curvature of certain 
submanifolds of $Z(F)\cap\TT^N$. 

\emph{The case when $\DD^N_{X'}=\BB^N$}. 
Here, a series of weighted Hilbert spaces 
$X=\fD_s=\ell^2_{w_\alpha(s)}(\ZZ_+(\infty))$, $s\in\RR$ is considered, 
all around the famous Drury-Arveson space, 
$$ 
\fD_s = \{ x=(x_\alpha)_{\alpha\in\ZZ_+(\infty)} : 
  (x_\alpha w_\alpha)\in \ell^2 \}, \quad
w_\alpha = \left(\frac{\alpha!}{|\alpha|!}\right)^{1/2}\cdot(|\alpha|+1)^s,
$$ 
where $\alpha!=\alpha_1!\alpha_2!\dots$, 
$|\alpha|=\alpha_1+\alpha_2+\dots$ ($\alpha\in\ZZ_+(\infty)$), $s\in\RR$. 
The \emph{Drury--Arveson space $X= \fD_0$} is the subject of 
numerous deep studies due to its role in multidimensional Nevanlinna--Pick 
theory, the multi-operator theory (the so called row contractions), and 
more; it is recognized as the most genuine multivariable analogue of the 
Hardy space of the disc. See \cite{RiS2012, Sha2014, Fan2019} for outlines 
of the theory. The reproducing kernel formula for Bohr transforms 
$F\in B\fD_0$,  
$$ 
F(\zeta) = (F,k_\zeta)_{\fD_0} \text{ where } 
k_\zeta(w)= \frac{1}{1-(w,\zeta)_{\CC^N}},
$$ 
shows that $\DD^N_{X'}=\BB^N$ for every $N\in\NN$, and these equalities 
still hold for all $\fD_s$. It is known \cite{RiS2012} that in two 
and three variables ($N=2,3$), the Beurling condition 
$Z(F)\cap\BB^N=\emptyset$ is sufficient for \emph{a polynomial $F$} to be 
cyclic in $B\fD_0$, but for $N\geq 4$ this is already not the case. 
In particular, $F(\zeta)=1-N^{N/2}\zeta_1\zeta_2\dots\zeta_N$, $N\geq 4$ 
are not cyclic in $B\fD_0$. 

On the other hand, it is shown, \cite{RiSu2016}, that $F\in B\fD_0$ 
\emph{and $1/F\in B\fD_0$} imply the cyclicity of $F$ in 
$B\fD_0$ (in contrast to the Bergman-type spaces $B\fD_s$, 
$s=-1$). This result contains, for example, that if 
$F\in\Mult(B\fD_0)$ and $F$ depends on a finitely many variables (in 
particular, if $F$ is a polynomial) and is separated from zero 
$|F(\zeta)|\geq c>0$ for $\zeta\in\BB^N$, then $F$ is cyclic in 
$B\fD_0$ (proved early in 2011 by S.~Cortea, E.~Sawyer, B.~Wick). 
Below, we show (Section 8) that, for every $N\geq 1$, factorable 
polynomials non-vanishing on $\BB^N$ are cyclic in $\fD_s$ for 
$s\leq 1/2$), and for $s>1/2$ the cyclicity criterion for every function 
$F$ is $F(\zeta)\neq 0$ for all $\zeta\in\overline{\BB}^N$. 

In \cite{KoV2022}, the case of polynomials in two variables in 
$B\fD_s(\BB^2)$ is considered. It is shown that for $s\leq 1/4$, a 
polynomial $F$ is cyclic if and only if $F(\zeta)\neq 0$ for 
$\zeta\in\BB^2$, and for $1/4<s\leq 1/2$ --- if and only if 
$Z(F)\cap\partial\BB^2$ is finite. For $s>1/2$, the cyclicity criterion is 
$F(\zeta)\neq 0$ for all $\zeta\in\overline{\BB}^2$.
 
\section{Factorable functions in Dirichlet-type spaces and algebras in polydiscs}
 
A kind of power-like weighted $\ell^2$ spaces are known as ``Dirichlet-type 
spaces'', referring to the classical Dirichlet space $\ell^2_w(\ZZ_+)_A$, 
$w_n=(1+n)^{1/2}$. Here and in the next Section, we give an elementary 
treatment to the problem of cyclicity, first, for factorable polynomials, 
and then to more general factorable functions in the framework of the 
Dirichlet-type spaces $\ell^p_w(\ZZ_+(\infty))_A$. It is already treated, 
in particular, in \cite{Ber2018,KKRS2019}, even for any polynomials but 
only for the Hilbert space case $p=2$ and the Dirichlet-type weights (as 
it is the case, in general, for the most part of the existing DCP 
literature). Speaking on polynomials, we can restrict ourselves on the 
sequence spaces over a finite dimensional subsemigroup $\ZZ_+^N$. A 
special situation occurs when the space $\ell^p_w(\ZZ_+(\infty))_A$ is an 
algebra with the point-wise multiplication and a well identified space of 
maximal ideals. In this Section we consider the weights $w_\alpha$ 
factored by variables (and so, oriented to the polydisc case $\DD^N$), and 
in Section 8 --- the ball oriented weights of Drury--Arveson type. 

Let $s\in\RR^N$, and let $w(s)$ be a power-like weight function on 
$\ZZ_+^N$ defined by
$$ 
w_\alpha (s) = (1+\alpha_1)^{s_1}\dots(1+\alpha_N)^{s_N}, 
\quad \alpha\in\ZZ_+^N.
$$ 
For short, the notation $\alpha^s:= w_\alpha(s)$ is used as well. Consider 
a lattice
$$ 
X_w^N = \ell^p(\ZZ_+^N,\alpha^s) 
  := \{x=(x_\alpha): (x_\alpha w_\alpha)\in \ell^p(\ZZ_+^N)\}, 
\quad 1\leq p\leq \infty,
$$ 
and its Bohr transform $BX^N_w=\ell^p(\ZZ_+^N,\alpha^s)_A$ defined on the 
domain $\DD^N_{(X^N_w)'}$. It is easy to show that for 
$X=\ell^p(\ZZ_+^N,\alpha^s)$,
$$ 
\DD^N_{X'} = \DD^N \text{ if } 1<p\leq\infty, s_j\leq\frac{1}{p'}=\frac{p-1}{p} 
\ (\forall j); \text{ or } p=1 \text{ and } s_j<0 \ (\forall j),
$$
and 
$$
\DD^N_{X'} = \overline{\DD}^N \text{ if } 1<p\leq\infty, 
s_j>\frac{1}{p'}=\frac{p-1}{p} \ (\forall j); \text{ or } p=1 \text{ and }
s_j\geq 0 \ (\forall j);
$$ 
mixed products of $\DD$'s and $\overline{\DD}$'s are also can be easily 
identified (but we will not enter in these details). In the generic 
infinite variable case 
$$ 
X_w=\ell^p(\ZZ_+(\infty),\alpha^s) 
:= \{ x=(x_\alpha): (x_\alpha w_\alpha)\in \ell^p(\ZZ_+(\infty))\}, 
\quad 1\leq p\leq\infty,
$$ 
one can similarly show that, for $ p>1$, 
$$ 
\DD^\infty_{X_w'} = \left\{ \lambda = (\lambda^\alpha) : 
  \sum_{j\geq 1}f_j(|\lambda_j|^{p'})<\infty \right\}, \text{ where }
f_j(x) = \sum_{n\geq 1}\frac{x^n}{(n+1)^{p's_j}} .
$$ 
In particular, (1) always, $\DD^\infty_{X_w'}\subset\overline{\DD}^\infty$; 
(2) if $s_j\leq\frac{1}{p'}$ ($\forall j$) then 
$\DD^\infty_{X_w'}=\DD^\infty_{p'}$ (i.e., $\lambda\in\DD^\infty_{X_w'}$
$\Leftrightarrow$ $|\lambda_j|<1$ ($\forall j$) and $\lambda\in \ell^{p'}(\NN)$); 
(3) for the case where $1<c\leq p's_j\leq C<\infty$ ($\forall j$), one has 
$\lambda\in\DD^\infty_{X_w'}$ $\Leftrightarrow$ $|\lambda_j|\leq 1$ 
($\forall j$) and $\lambda \in \ell^{p'}(\NN)$); and finally, 
(4) in order to have simply $\DD^\infty_{X_w'}=\overline{\DD}^\infty$, it 
is sufficient that $p's_j\geq c\log(j+1)$, $c>1/\log 2$. 

For $p=1$, we always have $\DD^\infty_{X_w'}= \overline{\DD}^\infty$.

The Hilbert space case $p=2$ in the bidisc $\DD^2$ ($N=2$) of theorem 7.2 
below is contained in \cite{KKRS2019}, and already for arbitrary 
polynomials (not only for the factorable ones) but with more involved 
proofs. See also comments in 7.3. 

We start with the following known (at least for sequences on the additive 
semigroup $\ZZ_+$ related to holomorphic spaces in $\DD$) general lemma on 
weighted $\ell^p$ spaces which are convolution algebras. In fact, the 
question has quite a long history: it started with J.~Wermer's result 
\cite{Wer1954} for the continuous case of convolution algebras 
$L^p(\RR,w)$. It could be a sample theorem for repeating everywhere, but 
was overlooked and rediscovered independently, and in a different 
language, for the sequence spaces $\ell^p(\ZZ_+,w_n)$ much later in 
\cite{Nik1968,Nik1970,KL1973,Shi1974}. It is also known that the 
sufficient condition of Lemma 7.1 for a weighted $\ell^p$ space to be a 
convolution algebra is also necessary under some natural regularity 
conditions on the weight (a kind of $\log$ concavity, but not in the full 
generality \cite{KL1973}; see for example, comments in \cite{ENZ1999}). It 
is clear that a similar to 7.1 claim can be shown for weighted $L^p$ 
norms on any locally compact abelian group (or semigroup). For 
completeness, we give a short proof to Lemma 7.1.
 
\begin{Lemma}
Let $X=\ell^p(\ZZ_+(\infty),w_\alpha)$. If 
$$ 
C_\gamma =: \sup_{\gamma\in\ZZ_+(\infty)}W_\gamma<\infty, \text{ where }
W_\gamma = \left(\sum_{\alpha+\beta=\gamma} 
  \left(\frac{w_\gamma}{w_\alpha w_\beta}\right)^{p'}\right)^{1/p'},
$$ 
$\frac{1}{p}+\frac{1}{p'}=1$, then $X_A$ is an algebra with pointwise 
multiplication, and the complex homomorphisms of $X_A$ are of the form 
$\varphi_\lambda: F\mapsto F(\lambda)$, $F\in X_A$ where 
$\lambda\in\DD^\infty_{X'}$ (in this case, the domain $\DD^\infty_{X'}$is 
compact with respect to a norm bounded coordinate-wise convergence in 
$X'=\ell^{p'}(\ZZ_+(\infty),1/w_\alpha)$). 
\end{Lemma}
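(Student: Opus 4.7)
The plan breaks into three independent steps: showing that $X_A$ is closed under pointwise multiplication with a submultiplicative norm, identifying its complex homomorphisms, and establishing the compactness claim.

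For the algebra property, I would translate pointwise multiplication of $F,G\in X_A$ into convolution of Taylor coefficients, $FG=\sum_\gamma(a\ast b)_\gamma\zeta^\gamma$, and apply H\"older coordinatewise. The key rewriting is
$$
(a\ast b)_\gamma\, w_\gamma=\sum_{\alpha+\beta=\gamma}(a_\alpha w_\alpha)(b_\beta w_\beta)\cdot\frac{w_\gamma}{w_\alpha w_\beta},
$$
so H\"older with conjugate exponents $p,p'$ gives $|(a\ast b)_\gamma w_\gamma|^p\leq W_\gamma^p\sum_{\alpha+\beta=\gamma}|a_\alpha w_\alpha|^p|b_\beta w_\beta|^p$. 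Summing in $\gamma$, interchanging the order of summation (legitimate because all terms are non-negative), and invoking $W_\gamma\leq C_\gamma$ yields $\|FG\|_{X_A}\leq C_\gamma\,\|F\|_{X_A}\|G\|_{X_A}$. The endpoints $p=1$ and $p=\infty$ are immediate variants.

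For the characters, I would exploit that on any unital commutative Banach algebra every complex homomorphism $\varphi$ automatically has norm $\leq 1$. Setting $\lambda_j=\varphi(\zeta_j)$, multiplicativity forces $\varphi(\zeta^\alpha)=\lambda^\alpha$, so $\varphi(P)=P(\lambda)$ for every polynomial $P$. The bound $|P(\lambda)|\leq\|P\|_{X_A}$ is precisely the statement, via the standard $\ell^p$--$\ell^{p'}$ pairing, that $(\lambda^\alpha)\in\ell^{p'}(\ZZ_+(\infty),1/w_\alpha)=X'$, i.e.\ $\lambda\in\DD^\infty_{X'}$. Extending $\varphi=\varphi_\lambda$ from polynomials to all of $X_A$ by norm density (for $1\leq p<\infty$) or weak-$\ast$ density (for $p=\infty$) then completes the identification.

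For compactness, take a sequence $\lambda^{(n)}\in\DD^\infty_{X'}$ with uniform bound $\|((\lambda^{(n)})^\alpha)\|_{X'}\leq M$; the inequality $|\lambda_j^{(n)}|^k/w_{ke_j}\leq M$ confines each coordinate to a fixed disc, so a diagonal extraction yields a subsequence with $\lambda_j^{(n)}\to\lambda_j^\ast$ for every $j$. Fatou's lemma applied coordinatewise to $|(\lambda^{(n)})^\alpha/w_\alpha|^{p'}$ then places $\lambda^\ast$ in $X'$ with $\|((\lambda^\ast)^\alpha)\|_{X'}\leq\liminf_n\|((\lambda^{(n)})^\alpha)\|_{X'}$, the desired compactness. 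The main hurdle I anticipate is bookkeeping --- the rearrangement of the double sum in step one and, in step two when $p=\infty$, the weak-$\ast$ density of polynomials needed to extend $\varphi_\lambda$ from the polynomials to all of $X_A$ --- rather than any conceptual difficulty, since the single hypothesis $C_\gamma<\infty$ drives all three steps.
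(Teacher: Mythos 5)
Your proof is correct and follows essentially the same route as the paper: H\"older applied to the coefficient convolution with the weight ratio $w_\gamma/(w_\alpha w_\beta)$ gives the submultiplicative bound $C_\gamma\|F\|\,\|G\|$, and evaluating a character on the coordinate functions $\zeta_j$ identifies the homomorphisms with points of $\DD^\infty_{X'}$. The only minor divergence is the compactness step, where the paper invokes the weak-$\ast$ compactness of the Gelfand space and transports it via $\varphi\mapsto(\lambda_j)$, while you give a direct diagonal-extraction-plus-Fatou argument; both are valid.
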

 
\begin{proof}
Let $1<p<\infty$ (modifications for $p=1$ or $p=\infty$ will be obvious) 
and $\gamma\in\ZZ_+(\infty)$. It suffices to have an estimate 
$\|FG\|\leq C\|F\|\cdot\|G\|$ for the product of two polynomials 
$F,G\in X_A$. For $d=\deg(FG)$), $H=FG$ and 
$\hat{H}(\gamma)=\sum_{\alpha+\beta=\gamma}\hat{F}(\alpha)\hat{G}(\beta)$, 
one has 
\begin{align*} 
|\hat{H}(\gamma)|^p w_\gamma^p 
&= \left|\sum_{\alpha+\beta=\gamma} \hat{F}(\alpha)w_\alpha\hat{G}(\beta)
  w_\beta\cdot\frac{1}{w_\alpha w_\beta} \right|^p w_\gamma^p \leq 
\\ 
&\leq \left( \sum_{\alpha+\beta=\gamma} |\hat{F}(\alpha)w_\alpha|^p
  |\hat{G}(\beta)w_\beta|^p\right)\left(\sum_{\alpha+\beta=\gamma}
  \left(\frac{w_\gamma}{w_\alpha w_\beta}\right)^{p'}\right)^{p/p'} .
\end{align*} 
Hence, 
\begin{align*} 
\|H\|^p &= \sum_{\gamma\in\ZZ_+^N} |\hat{H}(\gamma)|^p w_\gamma^p
\leq C_\gamma^p \sum_{\gamma\in\ZZ_+^N}
  \left( \sum_{\alpha+\beta=\gamma} |\hat{F}(\alpha)w_\alpha|^p
  |\hat{G}(\beta)w_\beta|^p\right) =
\\ 
&= C_\gamma^p \left(\sum_{\alpha\in\ZZ_+^N} |\hat{F}(\alpha)w_\alpha|^p\right)
  \left(\sum_{\beta\in \ZZ_+^N} |\hat{G}(\beta)w_\beta|^p\right) 
= C_\gamma^p \|F\|^p\|G\|^p ,
\end{align*} 
with a constant $C<\infty$ independent of $F,G$. It follows that $X_A$ is 
a Banach algebra (with an equivalent norm). If $\varphi:X_A\to\CC$ is a 
homomorphism of $X_A$, and $\lambda_j=\varphi (F_j)$, where 
$F_j(\zeta)=\zeta_j$ ($\forall\zeta\in\DD_{X'}$), then 
$\varphi(\zeta^\alpha)=\lambda^\alpha$ for every $\alpha\in\ZZ_+(\infty)$, 
where $\lambda=\lambda_\varphi=:(\lambda_1,\lambda_2,\dots)$, and so 
$\varphi(F)=\varphi_\lambda(F)$ for every polynomial $F$. Since $\varphi$ 
is bounded, $\lambda\in\DD^\infty_{X'}$ and \emph{vice-versa}, every such 
$\lambda$ defines a homomorphism $\varphi_\lambda$. Clearly, 
$\varphi\mapsto\lambda_j$ is continuous for $\sigma(X',X)$ topology, and 
since the space $\{\varphi\}$ is compact, $\DD^\infty_{X'}$ is 
homeomorphic to the space of all $\{\varphi\}$ (the Gelfand space of 
$X_A$). 
\end{proof}
 
\begin{Proposition}
Let $X=\ell^p(\ZZ_+^N,\alpha^s)$. 

(1) If $s_j>1/p'$ ($\forall j$), then $X_A$ is an algebra with pointwise 
multiplication, and the cyclicity criterion for any function $F\in X_A$ is 
$F(\zeta)\neq 0$, $\forall\zeta\in\overline{\DD}^N$. 

(2) If $|s|=\sum_{j=1}^Ns_j\leq 1/p'$, and $f$ is a linearly factorable 
polynomial, then $f$ is $\sigma(X,X')$-cyclic in $X_A$ if and only if 
$f(\zeta)\neq 0$ for $\zeta\in \DD^N$. 
\end{Proposition}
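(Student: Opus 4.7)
The two parts use genuinely different machinery. For \textbf{(1)}, the key step is to verify the hypothesis $\sup_\gamma W_\gamma<\infty$ of Lemma 7.1 under $s_j>1/p'$. Because $w_\alpha=\prod_j(1+\alpha_j)^{s_j}$ factors coordinate-wise and the convolution constraint $\alpha+\beta=\gamma$ is component-wise, $W_\gamma^{p'}$ factors as a product over $j$ of the single-variable sums $\sum_{a+b=\gamma_j}\bigl((1+\gamma_j)/((1+a)(1+b))\bigr)^{s_jp'}$; splitting each such sum at the midpoint bounds it uniformly in $\gamma_j$ by a tail of $\sum_a(1+a)^{-s_jp'}$, which is summable precisely because $s_jp'>1$. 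Lemma 7.1 then identifies $X_A$ as a commutative Banach algebra with Gelfand space $\DD_{X'}^N=\overline{\DD}^N$ (the latter equality being spelled out at the start of Section 7 under $s_j>1/p'$). The cyclicity criterion is then standard algebra theory: Lemma 6.1 gives necessity, and for sufficiency a function $F$ nonvanishing on the maximal ideal space is invertible in $X_A$, so $1/F\in X_A$ is approximable in $X_A$-norm by polynomials $p_n$ (using density of $X_{00}$ in $X$ for $1\le p<\infty$, or weak-$*$ density for $p=\infty$), and since $F$ is a bounded multiplier, $Fp_n\to 1$, whence $1\in E_F$ and therefore $E_F=X_A$ by $M_\zeta$-invariance.

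For \textbf{(2)}, Lemma 4.6 reduces the problem to showing that each affine linear factor $f(\zeta)=a+\sum_{i=1}^N a_i\zeta_i$ is $\sigma(X,X')$-cyclic in $X_A$. Non-vanishing of $f$ on $\DD^N$ forces $|a|\ge\sum|a_i|$; normalizing $a=1$ and setting $v=\sum a_i\zeta_i$, one has $\sum|a_i|\le 1$. The plan is to mimic Proposition 6.5, but to execute the geometric-series construction entirely inside $X_A$ rather than in $\ell^1_A$ (the latter need not embed into $X$ when $s_j>0$, as a standard lacunary example shows). Putting $f_r=1+rv$ for $0<r<1$, form the Neumann expansion $1/f_r=\sum_{k\ge 0}(-r)^kv^k$. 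The decisive estimate (assuming $s_j\ge 0$, which is the essential case) is
$$
\|v^k\|_X^p\le(1+k)^{|s|p}\sum_{|\alpha|=k}\binom{k}{\alpha}^p|a^\alpha|^p\le(1+k)^{|s|p},
$$
where the first inequality uses $w_\alpha\le(1+|\alpha|)^{|s|}$ on $|\alpha|=k$, and the second uses $\binom{k}{\alpha}|a^\alpha|\le(\sum|a_i|)^k\le 1$ together with the elementary $x^p\le x$ on $[0,1]$ and the multinomial identity. Hence $1/f_r\in X_A$ with polynomial partial sums converging in norm; since the polynomial $f$ is a bounded multiplier of $X_A$, one obtains $f/f_r\in E_f$ for every $r\in(0,1)$.

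The final step is to show $(\sigma)\lim_{r\to 1^-}f/f_r=1$. A direct computation gives $f/f_r=1+(1-r)v/f_r$, whose coefficient on $\zeta^\alpha$ with $\alpha\ne 0$ equals $(1-r)(-r)^{|\alpha|-1}\binom{|\alpha|}{\alpha}a^\alpha\to 0$ as $r\to 1^-$, while the constant term is $1$. For norm boundedness, the same estimate as above gives
$$
\|f/f_r-1\|_X^p=(1-r)^p\|v/f_r\|_X^p\le(1-r)^p\sum_{k\ge 0}r^{kp}(k+2)^{|s|p},
$$
and the Abelian asymptotic $\sum_kr^{kp}(k+2)^{|s|p}\asymp(1-r^p)^{-|s|p-1}$ together with $1-r^p\asymp p(1-r)$ produces an $O\bigl((1-r)^{p(1/p'-|s|)}\bigr)$ bound, uniform in $r\in(0,1)$ \emph{precisely because} $|s|\le 1/p'$. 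Lemma 2.1(2), applicable in the reflexive range $1<p<\infty$ since both $X_{00}$ and $X'_{00}$ are norm dense, then delivers $\sigma(X,X')$-convergence of $f/f_r$ to $1$. Hence $1\in E_f$, and by $M_\zeta$-invariance $E_f\supset X_{00}$, so $E_f=X$; necessity is Lemma 6.1.

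The main obstacle is the borderline case $|s|=1/p'$: here the construction produces only a \emph{norm-bounded} (not norm-convergent) net $f/f_r\to 1$, so the weak topology of Lemma 2.1(2) is indispensable, and this is exactly what pins the sharp threshold $|s|\le 1/p'$. The non-reflexive endpoints $p=1$ or $p=\infty$ would require the corresponding weak-$*$ variants of Lemma 2.1(2), while genuinely mixed-sign $s$ (if admitted by the statement) requires replacing the bound $w_\alpha\le(1+|\alpha|)^{|s|}$ by $w_\alpha\le(1+|\alpha|)^{\sum_j s_j^+}$ and verifying that this suffices.
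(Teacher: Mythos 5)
Your proposal is correct and follows essentially the same route as the paper: part (1) verifies the hypothesis of Lemma 7.1 by the coordinate-wise factorization and midpoint splitting of the convolution sum (using $s_jp'>1$), and part (2) reduces via Lemma 4.6 to a single affine factor, expands $1/f_r$ by the multinomial theorem, bounds $\max_{|\alpha|=k}w_\alpha$ by $O(k^{|s|})$ and $\sum_{|\alpha|=k}\binom{k}{\alpha}|a^\alpha|$ by $1$ to get $\|1/f_r\|=O\bigl((1-r)^{-(1/p+|s|)}\bigr)$, and concludes by the weak convergence $f/f_r\to 1$ exactly as in Proposition 6.5. Your explicit tracking of the exponent $1/p'-|s|$ and of why the weak topology is forced at the endpoint $|s|=1/p'$ is a slightly more careful write-up of what the paper does implicitly, but it is not a different argument.
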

 
\begin{proof} 
{\it (1)}
It suffices to check the condition of Lemma 7.1. Let $1<p<\infty$ 
(modifications for $p=1$ or $p=\infty$ are obvious) and 
$\gamma\in\ZZ_+^N$. Then (with $d=|\gamma|$), 
\begin{align*} 
\sum_{\alpha+\beta=\gamma}\left(\frac{w_\gamma}{w_\alpha w_\beta}\right)^{p'}
&= \sum_{\alpha+\beta=\gamma} 
\frac{(1+\gamma_1)^{s_1 p'}\dots(1+\gamma_N)^{s_N p'}}%
{(1+\alpha_1)^{s_1 p'}(1+\beta_1)^{s_1 p'}\dots} \leq 
\intertext{(summation on $1\leq\alpha_j\leq d-1$)}
&\leq\left( B\sum_{\alpha_1}\frac{d^{s_1p'}}{\alpha_1^{s_1p'}(d-\alpha_1)^{s_1p'}} \right)
\dots\left( B\sum_{\alpha_N}\frac{d^{s_Np'}}{\alpha_N^{s_{N1}p'}(d-\alpha_N)^{s_Np'}}\right)
\leq
\\
&\leq \left(B\sum^{d-1}_{k=1}\frac{d^m}{k^m(d-k)^m}\right)^N\leq C^{p'},
\end{align*} 
where $m=\min_j(s_jp')>1$ and $B=B(s)<\infty$ is a constant, and 
\begin{align*} 
\left( \sum^{d-1}_{k=1}\frac{d^m}{k^m(d-k)^m}\right)^N
&\leq \left(2\sum^{d/2}_{k=1}\frac{d^m}{k^m(d-k)^m}\right)^N
\leq 
\\
&\leq \left( B\sum^{d/2}_{k=1}\frac{1}{k^m}\right)^N
\leq \left( B\sum^\infty_{k=1}\frac{1}{k^m}\right)^N =: C^{p'} .
\end{align*} 
It follows that $X_A$ is a Banach algebra (with an equivalent norm), and 
its maximal ideal space is $\overline{\DD}^N$ (Lemma 7.1). The cyclicity 
claim also follows. 

{\it (2)} 
Let $f=\prod_{j=1}^nf_j$, where $f_j$ are affine linear in $\CC^N$. As in 
Proposition 6.9, it suffices to show that every $f_j$ is cyclic in $X_A$. 
For that, we repeat the reasoning of 6.5, with a different estimate. Since 
$f_j$ is an affine linear polynomial, 
$f_j(\zeta)=a-\sum_{i=1}^Na_i\zeta_i=:a-l(\zeta)$, and $f_j(\zeta)\neq 0$ 
for $\zeta\in\DD^N$, it follows $|a|\geq \sum_{i=1}^N|a_i|$. 

Simplifying the notation, let $a= 1$, $f(\zeta)=1-l(\zeta)$, 
$l(\zeta)=\sum_{i=1}^Na_i\zeta_i$, $1\geq\sum_{i=1}^N|a_i|$ and 
$$ 
f_r= 1-rl(\zeta).
$$ 
Then (pointwise in $\DD^N$), 
$$ 
\frac{1}{f_r} = \sum_{k\geq 0}r^kl(\zeta)^k
= \sum_{k\geq 0}r^k \sum_{|\alpha|=k} a^\alpha\zeta^\alpha
  \frac{|\alpha|!}{\alpha!},
$$ 
where $a=(a_1,\dots,a_N)$, $\alpha\in\ZZ_+^N$, 
$\alpha!=\alpha_1!\dots\alpha_N!$, and hence 
$$ 
\left\| \frac{1}{f_r} \right\|^p 
= \sum_{k\geq 0}r^{kp} \left\| \sum_{|\alpha|=k}a^\alpha\zeta^\alpha
  \frac{|\alpha|!}{\alpha!}\right\|^p,
$$ 
where 
$$ 
\left\| \sum_{|\alpha|=k}a^\alpha\zeta^\alpha\frac{|\alpha|!}{\alpha!}\right\|^p
= \sum_{|\alpha|=k} |a^\alpha|^p\left(\frac{|\alpha|!}{\alpha!}\right)^p w_\alpha^p.
$$ 
Since $M_k=:\max_\alpha w_\alpha^p=\max_\alpha\big(\prod_{j=1}^N(1+\alpha_j)^{s_j p}\big)$
over all $1+\alpha_j\geq 0$ having $\sum_{j=1}^N(1+\alpha_j)=N+k$ is 
$$ 
M_k = \prod_{j=1}^N \left((k+N)\frac{s_j}{|s|}\right)^{s_j p} = O(k^{|s|p})
\text{ as } k\to\infty,
$$ 
and 
$$
\sum_{|\alpha|=k}|a^\alpha|\frac{|\alpha|!}{\alpha!} 
= \left(\sum_{i=1}^N|a_i|\right)^k\leq 1,
$$ 
we get 
$$ 
\left\|\frac{1}{f_r}\right\|^p\leq C \sum_{k\geq 0}r^{kp}k^{|s|p}, \ 0<r<1.
$$ 
Hence 
$$ 
\left\|\frac{1}{f_r}\right\|^p = O\left(\frac{1}{(1-r^p)^{1+|s p}}\right), 
\quad
\left\|\frac{1}{f_r}\right\| = O\left(\frac{1}{(1-r)^{1/p+|s|}}\right)
\quad (\text{as } r\to 1),
$$ 
and so $\|1/f_r\|=O\big(\frac{1}{1-r}\big)$. Getting $f/f_r\in E_f$ and 
$\|f/f_r\|=O(1)$ (as $r\to 1$), we finish the proof with the same argument 
as in 6.5 above.
\end{proof}
 
\begin{Comments}
If a weighted {\bf ISL $X=\ell^p(\ZZ_+^N,w_\alpha)$} satisfies 
$$ 
\max\{w_\alpha: |\alpha|=k \} = O(k^S), \ S\geq 0\ (\text{as } k\to\infty)
$$ 
and $S\leq 1/p'$, the same reasoning as in 7.2(2) shows that a factorable 
polynomial $f$ is cyclic in $X_A$ if $f(\zeta)\neq 0$, $\zeta\in\DD^N$. 

It is also of interests to compare Proposition 7.2 with some results in 
\cite{Ber2018}, where it is shown (Theorem 1 of that paper) that if $p=2$ 
and the zero set $Z(f)$ of a polynomial $f$ is contained in the union of 
\emph{coordinate hyperplanes $\zeta_j=c_j$}, then the cyclicity holds 
under the weaker condition $0<s_j\leq 1/2$ ($\forall j$). However, for a 
similar assumptions but with \emph{non-coordinate hyperplanes}, this is 
already not the case, and the cyclicity is guaranteed only for the tridisc 
case $\DD^3$, $N=3$ and under condition $|s|\leq 1/2$ (coinciding with 
condition 7.2(2) for this case), see Theorem 5 of the paper cited.

In fact, under certain convergence assumptions, part 7.2(2) of the above 
Proposition can be done in the infinite variables setting of 
$X=\ell^p(\ZZ_+(\infty),w_\alpha)$, as follows. 
\end{Comments}
 
\begin{Proposition}
Let $X=\ell^p(\ZZ_+(\infty),w_\alpha(s))$, $s=(s_1,\dots)\in\RR_+^\infty$, 
and 
$$ 
w_\alpha(s) = \prod_{j=1}^\infty(1+\alpha_j)^{s_j}, \alpha\in\ZZ_+(\infty).
$$ 
Then, 

(1) $X=\ell^p(\ZZ_+(\infty),w_\alpha(s))$ is an {\bf ISL}, and an elementary 
dilation $D_{e_j}$, $e_j=(\delta_{j,k})_{k\geq 1}$ has the norm 
$\|D_{e_j}\| \leq 2^{s_j}$. The finite dimensional sections of the domain 
$\DD^\infty_{X'}$ are the polydiscs ($\DD^N_{X'}$ are either $\DD^N$ or 
$\overline{\DD}^N$, see the beginning of Section 7).

(2) If a linear affine function 
$$ 
f(\zeta) = a+\sum_{k\geq 1}a_k\zeta_k
$$ 
is nonvanishing on every $\DD^N_{X'}$, $N= 1,2,\dots$, then 
$|a|\geq\sum_{k\geq 1}|a_k|$. 

(3) Assume $|s|:=\sum_{j\geq 1}s_j<\infty$ and 
$f\in X_A\cap \ell^1(\ZZ_+(\infty))_A$ be a factorable function in the sense 
that $f=\prod_{j\geq 1}f_j$, $f_j$ being linear affine functions, and 
$$ 
0 < \prod_{j\geq 1} \|f_j\|_{\ell^1} < \infty.
$$ 
If $|s|\leq 1/p'=\frac{p-1}{p}$ and $f(\zeta)\neq 0$ for all 
$\zeta\in\DD_{X'}$, then $f$ is $\sigma(X,X')$-cyclic in $X_A$ 
($\sigma(\ell^1,c_0)$-cyclic, if $p=1$). 
\end{Proposition}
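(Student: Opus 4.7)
For Part~(1), the \textbf{ISL} axioms are inherited from $\ell^p$, and $D_{e_j}$ is the additive shift $\alpha\mapsto\alpha+e_j$, giving
\[
\|D_{e_j}\|^p=\sup_{\alpha_j\geq 0}\Bigl(\frac{2+\alpha_j}{1+\alpha_j}\Bigr)^{ps_j}=2^{ps_j},
\]
attained at $\alpha_j=0$; restriction to $\CC^J$ for finite $J$ reduces the description of $\DD^\infty_{X'}$ coordinate by coordinate to the finite-variable polydiscs at the opening of Section~7.

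For Part~(2), I would set $\zeta_k=0$ for $k>N$; since $\DD^N\subset\DD^N_{X'}$, the truncation $a+\sum_{k\leq N}a_k\zeta_k$ is nonvanishing on $\DD^N$. Were $\sum_{k\leq N}|a_k|>|a|$, choosing phases $e^{i\theta_k}$ with $a_k e^{i\theta_k}=-|a_k|\,a/|a|$ and $t=|a|/\sum_{k\leq N}|a_k|\in(0,1)$ would produce the zero $\zeta_k=te^{i\theta_k}$ in $\DD^N$; hence $|a|\geq\sum_{k\leq N}|a_k|$, and $N\to\infty$ completes the claim.

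For Part~(3), my strategy is to prove $1\in E_f$ in $\sigma(X,X')$, from which shift invariance of $E_f$ together with $\sigma$-density of polynomials yields $E_f=X_A$. Since $0\in\DD^\infty_{X'}$ forces $f(0)=\prod_j a_j\neq 0$, I normalize by replacing $f_j$ with $\tilde f_j:=f_j/a_j=1-\tilde l_j$ and $f$ with $\tilde f=f/\prod_j a_j$, so $\tilde a_j=1$ and (by Part~(2)) $\|\tilde l_j\|_{\ell^1}\leq 1$. The hypothesis $0<\prod\|f_j\|_{\ell^1}<\infty$ then propagates to $\sum_j\|\tilde l_j\|_{\ell^1}<\infty$, and since $|s|<\infty$ gives $M:=\sup_k 2^{s_k}<\infty$, also to $\sum_j\|\tilde l_j\|_{\ell^1(w)}<\infty$ and $\prod_j\|\tilde f_j\|_{\ell^1(w)}<\infty$. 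For a single affine linear $\tilde f_j$ in infinitely many variables, the argument of Proposition~7.2(2) extends directly: it uses only $\max_{|\alpha|=k}w_\alpha\leq(1+k)^{|s|}$ (from $w_\alpha\leq(1+|\alpha|)^{|s|}$) and $\|\tilde l_j\|_{\ell^1}\leq 1$, with $|s|\leq 1/p'$ making the key estimate $\|1/(\tilde f_j)_r\|_X=O(1/(1-r))$ close, so each $\tilde f_j$ is $\sigma$-cyclic in $X_A$. Submultiplicativity of $w$ combined with Young's inequality yields $\ell^1(w)\hookrightarrow\Mult(X_A)$ with $\|\cdot\|_{\Mult}\leq\|\cdot\|_{\ell^1(w)}$, so each $\tilde f_j$ is a multiplier; iterating Lemma~4.6 then makes every truncation $\tilde F_N:=\prod_{j\leq N}\tilde f_j$ cyclic. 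Writing $\tilde f=\tilde F_N\tilde G_N$ with $\tilde G_N:=\prod_{j>N}\tilde f_j$, I pick polynomials $p_k$ with $p_k\tilde F_N\to 1$ weakly and apply the weak-to-weak continuous multiplier $\tilde G_N$ to obtain $p_k\tilde f\to\tilde G_N$ weakly, placing $\tilde G_N\in E_{\tilde f}$. Under the normalization, $\tilde G_N(0)=1$ and $\|\tilde G_N\|_{\ell^1}\leq\prod_{j>N}\|\tilde f_j\|_{\ell^1}\to 1$ give $\|\tilde G_N-1\|_{\ell^1}\to 0$ (hence coordinatewise convergence), while $\|\tilde G_N\|_X\leq\|\tilde G_N\|_{\ell^1(w)}\leq\prod_{j>N}\|\tilde f_j\|_{\ell^1(w)}$ remains uniformly bounded; Lemma~2.1(2) then delivers $\tilde G_N\to 1$ in $\sigma(X,X')$ (for $p=1$, the $\ell^1$-norm convergence already gives $\sigma(\ell^1,c_0)$).

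The main obstacle I foresee lies in the last accounting step: the hypothesis only controls $\prod\|f_j\|_{\ell^1}$, but to uniformly bound $\|\tilde G_N\|_X$ I need the heavier product $\prod\|\tilde f_j\|_{\ell^1(w)}$ to converge, an upgrade resting on combining the Part~(2) dominance with $\sup_k 2^{s_k}<\infty$ (a consequence of $|s|<\infty$).
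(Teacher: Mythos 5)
Your proposal is correct and follows essentially the same route as the paper's proof: the same computation of $\|D_{e_j}\|$, the standard truncation argument for (2) (which the paper dismisses as clear), and for (3) the same normalization $f_j(0)=1$, the same multiplier bound for each affine factor, the same resolvent estimate $\|1/F_r\|_X=O((1-r)^{-(1/p+|s|)})$ via $\max_{|\alpha|=k}w_\alpha(s)=O(k^{|s|})$, and the same tail-product reduction. Your version merely makes explicit two points the paper leaves implicit (the $\sigma(X,X')$-convergence $\tilde G_N\to 1$ via the $\ell^1$-product estimate, and the passage from $\prod_j\|f_j\|_{\ell^1}<\infty$ to $\prod_j\|\tilde f_j\|_{\ell^1(w)}<\infty$ using $\sup_k 2^{s_k}<\infty$), which is a welcome but not substantively different elaboration.
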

 
\begin{proof} 
Let $1<p<\infty$ (modifications for $p=1$ or $p=\infty$ are standard). 

{\it (1)}
Since $\|D_\alpha\| = \sup\{\frac{w_{\alpha+\beta}}{w_\beta} : \beta\in\ZZ_+(\infty)\}$, 
we have for $\alpha=e_j$: 
\begin{align*} 
\|D_{e_j}\| &= \|M_{\zeta_j}\| 
= \sup\left\{\frac{w_{e_j+\beta}}{w_\beta} : \beta\in\ZZ_+(\infty)\right\}
\\
&= \sup\left\{\frac{(1+\beta_j+1)^{s_j}}{(1+\beta_j)^{s_j}} : 
  \beta\in\ZZ_+(\infty)\right\}\leq 2^{s_j}.
\end{align*} 
The claim on the domain $ \DD^N_{X'}$ is also easy. 

{\it (2)} It is clear (and is already noticed). 

{\it (3)} Without loss of generality, $f_j(0)= 1$ for every $j\geq 1$ 
(indeed, $0<\prod_{j\geq 1}|f_j(0)|<\infty$). Observe first, that 
$f_j=:\sum_{k\geq 0}a_{j,k}\zeta_k\in\Mult(X_A)$ for every $j$: 
$$ 
\|f_j\|_\Mult = \left\|1+\sum_{k\geq 1}a_{j,k}\zeta_k\right\|_\Mult
\leq 1+\sum_{k\geq 1}|a_{j,k}|\cdot \|M_{\zeta_k}\| 
\leq 1+ 2^{\overline{s}} \|f_j-1\|_{\ell^1},
$$ 
where $\overline{s}=\max_{k\geq 1}(s_k)$. Since 
$\prod_j(1+\|f_j-1\|_{\ell^1})<\infty$, it follows that the product $f$, as 
well as all products $\prod_{j\geq n}f_j$, are also multipliers. Hence, 
for the cyclicity of $f$, it suffices to prove that every $f_j$ is cyclic. 

Simplifying the notation, let $f_j=F(\zeta)=1-l(\zeta)$ and 
$F_r=1-rl(\zeta)$, $0<r<1$. As in Proposition 7.2(2), we show that 
$\|1/F_r\|_X=O\big(\frac{1}{1-r}\big)$ for $r\to1$, which implies the 
result. Indeed, we have pointwise $1/F_r=\sum_{k\geq 0}r^kl(\zeta)^k$ and  
\begin{align*} 
\frac{1}{F_r} &= \sum_{k\geq 0}r^kl(\zeta)^k 
= \sum_{k\geq 0}r^k \sum_{|\alpha|=k}a^\alpha\zeta^\alpha
  \frac{|\alpha|!}{\alpha!},
\\ 
\left\|\frac{1}{F_r}\right\|^p 
&= \sum_{k\geq 0}r^{kp}\left\|\sum_{|\alpha|=k}a^\alpha\zeta^\alpha
  \frac{|\alpha|!}{\alpha!}\right\|^p
= \sum_{k\geq 0}r^{kp} \sum_{|\alpha|=k}|a^\alpha|^p
  \left(\frac{|\alpha|!}{\alpha!}\right)^p w_\alpha^p.
\end{align*} 
Maximizing $\log(w_\alpha(s))$ over all $\alpha\in\ZZ_+(\infty)$ with a 
given (finite) support $\sigma\subset\NN$ and $\sum\alpha_j=k$, we can 
restrict ourselves to the case $\#\sigma\leq k$, and then as in the proof 
of 7.2(2), see that $\max(w_\alpha(s))$ over all such $\alpha$ is  
\begin{multline*} 
\max\left\{ w_\alpha(s) : \supp(\alpha)\subset \sigma, \sum\alpha_j=k \right\}
= \prod_{j\in\sigma} \left(s_j\frac{\#\sigma+k}{\sum_{j\in\sigma}s_j}\right)^{s_j}
\\
\leq (2k)^{\sum_j s_j} \prod_{j\in \sigma}
  \left(\frac{s_j}{\sum_{j\in\sigma}s_j}\right)^{s_j}
\leq (2k)^{\sum_j s_j}\leq (2k)^{|s|} .
\end{multline*} 
Using $\sum_{|\alpha|=k}|a^\alpha|\big(\frac{|\alpha|!}{\alpha!}\big)\leq1$, 
we obtain $\|1/F_r\|^p\leq C\sum_{k\geq 0}r^{kp}k^{|s|p}$, and so 
$$ 
\|1/F_r\|^p_X = O\left(\frac{1}{(1-r)^{1+|s|p}}\right),
$$ 
where $1/p+|s|\leq 1$. This proves the claim. 
\end{proof} 
 
\section{Factorable functions in Dirichlet--Drury--Arveson spaces and 
algebras in \texorpdfstring{$\ell^p$}{lp}-unit ball}
 
Here we briefly consider an $\ell^p$ version of weighted Drury-Arveson space 
$\fD_0=\ell^2_{w_\alpha}(\ZZ_+^N)$ (see 6.10(4) above) and its infinite 
variables versions, in the form $\fD_s^p=\ell^p_{w_\alpha(s)}(\ZZ_+(\infty))$, 
$s\in \RR_+$, 
$$ 
\fD_s^p = \{ x=(x_\alpha)_{\alpha\in\ZZ_+(\infty)} : 
  (x_\alpha w_\alpha)\in\ell^p \}, 
w_\alpha = \left(\frac{\alpha!}{|\alpha|!}\right)^{1/p'} (|\alpha|+1)^s,
$$ 
where, as above, $\alpha!=\alpha_1!\alpha_2!\dots$, 
$|\alpha|=\alpha_1+\alpha_2+\dots$ ($ \alpha\in\ZZ_+(\infty)$), $1/p+1/p'=1$. 
The \emph{Drury--Arveson spaces are the Bohr transforms $B\fD^2_0$}
(defined in the unit balls $\BB^N$, see 8.1 below). Since the answers to 
the cyclicity question are different for the algebra- and 
non-algebra-cases of $\fD_s^p$, we state preliminary results separately 
(Lemmas 8.1 and 8.2). The following lemma identifies the domain of the 
definition for the holomorphic space $B\fD_s^p=(\fD_s^p)_A$. 
 
\begin{Lemma}
Let $X=\fD_s^p$ as defined above. Then, $X=\fD_s^p$ is an {\bf ISL}, and 
an elementary dilation $D_{e_j}$, $e_j=(\delta_{j,k})_{k\geq 1}$ has the 
norm $\|D_{e_j}\|\leq 2^s$.

For $sp'\leq 1$, the domain $\DD^\infty_{X'}$ is the open unit 
``$\ell^{p'}$-ball'' $\BB_{p'}^\infty$,
$$ 
\DD^\infty_{X'}=\BB_{p'}^\infty
=: \left\{ \lambda=(\lambda_j)_{j\geq 1}\in\CC^\infty : 
  \sum_{j\geq 1}|\lambda_j|^{p'}< 1 \right\},
$$ 
and for $sp'>1$ it is the closed ``$\ell^{p'}$-ball''
$$ 
\DD^\infty_{X'}=\overline{\BB}_{p'}^\infty 
=: \left\{ \lambda=(\lambda_j)_{j\geq 1}\in \CC^\infty : 
  \sum_{j\geq 1}|\lambda_j|^{p'}\leq 1 \right\}
$$ 
(for $p'=\infty$ and $s=0$, $X=\fD_0^1=\ell^1(\ZZ_+(\infty))$ and 
$\DD^\infty_{X'}=\overline{\DD}_\infty^\infty 
=: \{\lambda=(\lambda_j)_{j\geq 1}\in \CC^\infty:|\lambda_j|\leq 1\}$). 
The finite dimensional sections of $\DD^\infty_{X'}$ are the 
$\ell^{p'}$-balls $\BB_{p'}^N=\DD^\infty_{X'}\cap\CC^N$ (open or closed).
\end{Lemma}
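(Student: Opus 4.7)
The plan is to verify the three ISL axioms, bound the elementary dilation norm by direct computation, and then identify $\DD^\infty_{X'}$ by reducing the defining series to a one-variable tail via the multinomial theorem.

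Conditions (i) and (iii) are immediate: $\|x\|_X = \|(x_\alpha w_\alpha)\|_{\ell^p}$ depends only on the moduli of the $x_\alpha$, and $w_{(0,0,\dots)}=1$ gives $\|\delta_{(0)}\|_X = 1$. For condition (ii) and the norm bound on $D_{e_j}$, the identity $\|D_\gamma x\|_X^p = \sum_\beta |x_\beta|^p w_{\beta+\gamma}^p$ reduces matters to estimating $\sup_\beta w_{\beta+\gamma}/w_\beta$. A direct computation gives
\[
\frac{w_{\beta+e_j}}{w_\beta}
= \left(\frac{\beta_j+1}{|\beta|+1}\right)^{1/p'}\!\left(\frac{|\beta|+2}{|\beta|+1}\right)^{s} \leq 2^s,
\]
since the first factor is at most $1$ and the second at most $2^s$. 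Iterating gives boundedness of every $D_\gamma$.

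The main step is the identification of $\DD^\infty_{X'}$. Standard duality gives $X' = \{y : (y_\alpha/w_\alpha)_\alpha \in \ell^{p'}\}$ (with the obvious interpretation when $p'=\infty$), so $\lambda \in \DD^\infty_{X'}$ iff $\sum_\alpha |\lambda^\alpha|^{p'}/w_\alpha^{p'} < \infty$ for $1<p\leq\infty$. Using $w_\alpha^{p'} = (\alpha!/|\alpha|!)(|\alpha|+1)^{sp'}$ and grouping the sum by $n=|\alpha|$, the multinomial identity $\sum_{|\alpha|=n}(n!/\alpha!)\,x^\alpha = \bigl(\sum_j x_j\bigr)^n$ applied with $x_j=|\lambda_j|^{p'}$ collapses the inner sum and yields
\[
\sum_{\alpha\in\ZZ_+(\infty)}\frac{|\lambda^\alpha|^{p'}}{w_\alpha^{p'}}
= \sum_{n\geq 0}\frac{\|\lambda\|_{\ell^{p'}}^{p'n}}{(n+1)^{sp'}},
\]
the rearrangement being legitimate by non-negativity (Tonelli).

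From here the analysis is elementary: the series converges for $\|\lambda\|_{\ell^{p'}}<1$ (geometric decay), diverges for $\|\lambda\|_{\ell^{p'}}>1$, and on the boundary $\|\lambda\|_{\ell^{p'}}=1$ reduces to $\sum (n+1)^{-sp'}$, convergent precisely when $sp'>1$. Hence $\DD^\infty_{X'} = \BB_{p'}^\infty$ if $sp'\leq 1$ and $\overline{\BB}_{p'}^\infty$ if $sp'>1$. The finite-dimensional sections $\DD^N_{X'} = \DD^\infty_{X'}\cap\CC^N$ follow by restricting $\lambda$ to coordinates in $\{1,\dots,N\}$, which leaves the calculation unchanged. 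The edge cases $p=\infty$ (with $p'=1$) and $p=1$ (where $p'=\infty$ makes the $\ell^{p'}$-ball a polydisc, and the criterion for closedness reduces to $s>0$; for $s=0$ one recovers directly $\fD_0^1=\ell^1$ with $\DD^\infty_{X'}=\overline{\DD}^\infty_\infty$) are handled by the obvious sup/sum versions of the same argument. The only mildly subtle point is recognizing that the weights are tuned precisely so that $1/w_\alpha^{p'}$ carries exactly the multinomial coefficient $|\alpha|!/\alpha!$, which is what makes the collapse to a one-variable series work; past that recognition the argument is routine.
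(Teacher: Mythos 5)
Your proof is correct and follows essentially the same route as the paper's: the same computation $w_{\beta+e_j}/w_\beta=\bigl(\tfrac{\beta_j+1}{|\beta|+1}\bigr)^{1/p'}\bigl(\tfrac{|\beta|+2}{|\beta|+1}\bigr)^{s}\leq 2^s$ for the dilation norm, and the same collapse of $\sum_\alpha|\lambda^\alpha|^{p'}/w_\alpha^{p'}$ to the one-variable series $\sum_{n\geq 0}(n+1)^{-sp'}\|\lambda\|_{\ell^{p'}}^{p'n}$ via the multinomial theorem, with the identical case analysis on $\|\lambda\|_{\ell^{p'}}$ and $sp'$. No gaps.
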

 
\begin{proof}
Let $1<p<\infty$ (modifications for $p=1$ or $p=\infty$ are standard). 
Since $\|D_\alpha\| = \sup\big\{ \frac{w_{\alpha+\beta}}{w_\beta} 
  : \beta\in\ZZ_+(\infty)\big\}$, we have for $\alpha = e_j$: 
\begin{align*} 
\|D_{e_j}\| &= \|M_{\zeta_j}\| = \sup\left\{\frac{w_{e_j+\beta}}{w_\beta} 
  : \beta\in\ZZ_+(\infty)\right\} =
\\ 
&= \sup\left\{ \left( \frac{(\beta+e_j)!|\beta|!}{(|\beta|+1)!|\beta|!} 
  \right)^{1/p'} \left( \frac{|\beta|+2}{|\beta|+1} \right)^s : 
  \beta\in\ZZ_+(\infty) \right\} \leq 
\\
&\leq \sup\left\{ \left( \frac{|\beta|+2}{|\beta|+1}\right)^s : 
  \beta\in\ZZ_+(\infty) \right\} = 2^s .
\end{align*} 
The domain $\DD^N_{X'}$ consists of all $\lambda=(\lambda_j)_{j\geq 1}$ 
such that $(\lambda^\alpha)_\alpha\in\ell^{p'}_{1/w_\alpha(s)}(\ZZ_+(\infty))$. 
Summing up, 
\begin{align*} 
\sum_\alpha |\lambda^\alpha|^{p'} \frac{|\alpha|!}{\alpha!}(|\alpha|+1)^{-sp'}
&= \sum_{n\geq 0} (n+1)^{-sp'} \sum_{|\alpha|=n} |\lambda^\alpha|^{p'} 
  \frac{|\alpha|!}{\alpha!} =
\\ 
&= \sum_{n\geq 0} (n+1)^{-sp'} \left(\sum_{j\geq 1}|\lambda_j|^{p'}\right)^n,
\end{align*} 
which is finite if and only if either $\sum_{j\geq 1}|\lambda_j|^{p'}<1$ 
or $\sum_{j\geq 1}|\lambda_j|^{p'}\leq 1$ and $sp'>1$.
\end{proof}
 
In the next Lemma, we consider a slightly larger scale of spaces 
$\fD_{s,t}^p= \ell^p_{w_\alpha(s)}(\ZZ_+(\infty))$ introducing one more 
parameter as follows
$$ 
w_\alpha = w_\alpha(s,t) := \left( \frac{\alpha!}{|\alpha|!} \right)^t
  (|\alpha|+1)^s, \quad t,s\in\RR_+.
$$ 
Previously introduced weights are $w_\alpha=w_\alpha(s,1/p')$.

The key point for checking the condition of Lemma 7.1 for the 
$\fD_{s,t}^p$ spaces is the use of a binomial coefficient identity, namely 
a ``generalised Vandermond identity'' for binomial coefficients (coming from 
the famous ``Pascal's triangle/simplex''). This point was suggested to me by 
A.~Borichev (Aix-Marseille University) after reading a preliminary version 
of that paper. See also comments 8.3 below.
 
\begin{Lemma}
$\fD_{s,t}^p$ is a convolution algebra (a Banach algebra after an 
equivalent renorming) for
$$ 
s>1/p', \ t\geq 1/p' \text{ (for } p>1; \text{ and } s\geq 0, \ t\geq 0
\text{ for } p=1).
$$
\end{Lemma}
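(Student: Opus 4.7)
The plan is to verify the sufficient condition of Lemma 7.1: namely, to bound
$$
W_\gamma^{p'} = \sum_{\alpha+\beta=\gamma} \left(\frac{w_\gamma}{w_\alpha w_\beta}\right)^{p'}
$$
uniformly in $\gamma \in \ZZ_+(\infty)$. Expanding the weight $w_\alpha = (\alpha!/|\alpha|!)^t(|\alpha|+1)^s$ and using $|\alpha|+|\beta|=|\gamma|$ whenever $\alpha+\beta=\gamma$, one rewrites
$$
\frac{w_\gamma}{w_\alpha w_\beta}
= \left(\frac{\prod_j \binom{\gamma_j}{\alpha_j}}{\binom{|\gamma|}{|\alpha|}}\right)^t
\cdot \frac{(|\gamma|+1)^s}{(|\alpha|+1)^s(|\beta|+1)^s},
$$
since $\gamma!/(\alpha!\beta!) = \prod_j \binom{\gamma_j}{\alpha_j}$.

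First I would split the sum over $\alpha+\beta=\gamma$ according to $|\alpha|=m$, $0\leq m\leq |\gamma|$, so that the last factor only depends on $m$ and factors out. The combinatorial residue that remains is
$$
S_m \;=\; \sum_{\alpha\leq\gamma,\;|\alpha|=m} \left(\prod_j \binom{\gamma_j}{\alpha_j}\right)^{tp'}.
$$
Here enters the generalised Vandermonde identity (the Chu--Vandermonde summation over a finitely supported multi-index):
$$
\sum_{\alpha\leq\gamma,\;|\alpha|=m} \prod_j \binom{\gamma_j}{\alpha_j} \;=\; \binom{|\gamma|}{m}.
$$
Since $tp'\geq 1$, the elementary inequality $\sum x_i^q \leq (\sum x_i)^q$ (for $q\geq 1$ and $x_i\geq 0$) gives $S_m \leq \binom{|\gamma|}{m}^{tp'}$. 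This exactly cancels the $\binom{|\gamma|}{|\alpha|}^{-tp'}$ coming from the weight ratio, and leaves
$$
W_\gamma^{p'} \;\leq\; \sum_{m=0}^{|\gamma|} \left(\frac{|\gamma|+1}{(m+1)(|\gamma|-m+1)}\right)^{sp'}.
$$

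The final step is the one-variable estimate: by symmetry $m\leftrightarrow |\gamma|-m$ and the elementary bound $(m+1)(n-m+1)\geq (m+1)(n+1)/2$ for $m\leq n/2$ (with $n=|\gamma|$), one gets $\frac{n+1}{(m+1)(n-m+1)}\leq 2/(m+1)$, and hence
$$
W_\gamma^{p'} \;\leq\; 2^{1+sp'} \sum_{m\geq 0}\frac{1}{(m+1)^{sp'}},
$$
which is finite precisely because $sp'>1$. Lemma 7.1 then applies and yields the algebra property for $p>1$. For $p=1$ the modification is routine: $W_\gamma$ becomes $\sup_{\alpha+\beta=\gamma} w_\gamma/(w_\alpha w_\beta)$; the factor $(\prod_j \binom{\gamma_j}{\alpha_j})/\binom{|\gamma|}{|\alpha|}$ is $\leq 1$ by a single term of Vandermonde, and $(|\gamma|+1)/((|\alpha|+1)(|\beta|+1))\leq 1$ by minimizing at the endpoints, so $W_\gamma\leq 1$ for all $s,t\geq 0$.

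The main obstacle is the combinatorial step: without the generalised Vandermonde identity (and the observation that $tp'\geq 1$ allows one to pass from the sum of $tp'$-powers to the $tp'$-power of the sum), there is no way to decouple the multinomial content of $w_\gamma/(w_\alpha w_\beta)$ from the tail $(|\alpha|+1)^{-s}(|\beta|+1)^{-s}(|\gamma|+1)^s$; and it is this decoupling that reduces the infinite-variable sum over $\ZZ_+(\infty)$ to the simple one-dimensional series $\sum (m+1)^{-sp'}$ whose convergence dictates the threshold $s>1/p'$.
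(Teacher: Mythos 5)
Your proposal is correct and follows essentially the same route as the paper's proof: verify the sufficient condition of Lemma 7.1, decouple the multinomial factor $\gamma!/(\alpha!\beta!)=\prod_j\binom{\gamma_j}{\alpha_j}$ from the $(|\alpha|+1)^{-s}(|\beta|+1)^{-s}(|\gamma|+1)^{s}$ tail by summing over the level sets $|\alpha|=m$, apply the generalised Vandermonde identity together with the inequality $\sum x_i^{q}\leq(\sum x_i)^{q}$ for $q=tp'\geq 1$, and finish with the one-dimensional series $\sum_m (m+1)^{-sp'}$, convergent since $sp'>1$. The only (welcome) difference is that you spell out the $p=1$ case, which the paper dismisses as an obvious modification.
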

 
\begin{proof}
It suffices to verify the sufficient condition of Lemma 7.1. Let $p>1$ 
(modifications for $p=1$ will be obvious), $\gamma\in\ZZ_+(\infty)$, and 
$$ 
W_\gamma^{p'} 
= \sum_{\alpha+\beta=\gamma} \left(\frac{w_\gamma}{w_\alpha w_\beta}\right)^{p'},
$$ 
where $w_\alpha=w_\alpha(s,t)$. We employ the following identity for 
binomial (or multinomial) coefficients, sometimes called ``generalised 
Vandermond identity'' (see comments, references and a proof below): for 
$0\leq k\leq |\gamma|$,
$$ 
\sum_{\alpha+\beta=\gamma,|\alpha|=k} \frac{\gamma!}{\alpha!\beta!}
  = \binom{|\gamma|}{k},
$$ 
where $\binom{l}{k}$ stands for the binomial coefficients. Using this we 
obtain
$$ 
W_\gamma^{p'} = \sum_{k=0}^{|\gamma|} \sum_{\alpha+\beta=\gamma,|\alpha|=k}
  \left( \frac{\gamma!}{\alpha!\beta!}\right)^{tp'}c(k),
$$ 
where 
$$ 
c(k) = \left(\frac{k!(|\gamma|-k)!}{|\gamma|!}\right)^{tp'}
  \left(\frac{|\gamma|+1}{(k+1)(|\gamma|-k+1)}\right)^{sp'},
$$ 
and hence 
\begin{align*} 
W_\gamma^{p'} &\leq \sum_{k=0}^{|\gamma|} \binom{|\gamma|}{k}^{tp'} c(k)
  = \sum_{k=0}^{|\gamma|} 
   \left(\frac{|\gamma|+1}{(k+1)(|\gamma|-k+1)}\right)^{sp'} \leq 
\\ 
&\leq 2\sum_{k=0}^{|\gamma|/2} 
  \left(\frac{|\gamma|+1}{(k+1)(|\gamma|-k+1)}\right)^{sp'} 
\leq 2\sum_{k=0}^{|\gamma|/2} \left(\frac{2}{k+1}\right)^{sp'} <
\\
&< C^{p'} =: 2\sum_{k=0}^\infty \left(\frac{2}{k+1}\right)^{sp'} < \infty .
\end{align*} 
So, $\sup_\gamma W_\gamma<\infty$, and by Lemma 7.1, $\fD_{s,t}^p$ is an 
algebra.
\end{proof}
 
\begin{Comments}
The algebra property for the Hilbert space $\fD_{s,t}^2(\BB^N)$ in finite 
number of variables is proved for $t=1/p'=1/2$ and $s>1/2$ in 
\cite{OrF2006}, Theorem 4.1, and then --- with a very short proof --- in 
\cite{AHMcCR2022}, Lemma 14.2 (in both cases, using the reproducing kernel 
techniques and certain integration expressions for the norm coming from 
the Hilbert space structure). 

A binomial identity used in the proof of 8.2 (``generalised Vandermond's 
identity'') rose from the classical ``Pascal/Tartaglia's triangle/simplex'', 
and so goes back as early as to X-XIII centuries, see for this history, 
for example, \cite{Mer2003} or \cite{Pas-Wiki,Van-Wiki}. For completeness, 
we give a short proof.
\end{Comments}
 
\begin{proof}
Let $n_i\in\ZZ_+$, $m=\sum_{i=1}^Nn_i$. Using the Newton binomial, 
\begin{align*} 
\sum_{k=0}^m \binom{m}{k} x^k &= (1+x)^m = \prod_{i=0}^N(1+x)^{n_i}
  = \prod_{i=0}^N \left( \sum_{k_i=0}^{n_i} \binom{n_i}{k_i} x^{k_i} \right) = 
\\
&= \sum_{k=0}^mx^k \sum_{k_1+\dots+k_N=k,k_i\leq n_i} 
  \prod\binom{n_i}{k_i}.
\end{align*} 
Comparing the coefficients, we see that 
\begin{equation*} 
\binom{m}{k} = \sum_{k_1+\dots+k_N=k,k_i\leq n_i} \prod\binom{n_i}{k_i}.
\qedhere
\end{equation*} 
\end{proof}
 
Below, in Proposition 8.5 (an $\ell^{p'}$-unit ball analogue of 
Proposition 7.4), an extra property $f\in\ell^1_A$ of a function from 
$X=\fD_s^p$ is required. The following lemma, in a sense, justifies this 
requirement.
 
\begin{Lemma}
If a linear affine function $f$, 
$$ 
f(\zeta) = a+\sum_{k\geq 1}a_k\zeta_k
$$ 
is nonvanishing on every $\BB_{p'}^N$, $N= 1,2,\dots$, then 
$|a|\geq\big(\sum_{k\geq 1}|a_k|^p\big)^{1/p}$. 
\end{Lemma}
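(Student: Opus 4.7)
The plan is to reduce this to the standard duality $(\ell^{p'})^{*} = \ell^{p}$ applied to the linear part of $f$. First observe that $a\neq 0$: since $0\in\BB_{p'}^{N}$, the hypothesis forces $a = f(0)\neq 0$, so after dividing through by $a$ we may assume $a = 1$ and aim for $(\sum_{k\geq 1}|a_{k}|^{p})^{1/p}\leq 1$.

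For each fixed $N$, set $L_{N}(\zeta) = \sum_{k=1}^{N} a_{k}\zeta_{k}$, viewed as a continuous linear functional on $(\CC^{N},\|\cdot\|_{\ell^{p'}})$. By H\"older's inequality, its operator norm is exactly $\|a^{(N)}\|_{\ell^{p}} = (\sum_{k=1}^{N}|a_{k}|^{p})^{1/p}$, and this supremum is actually attained on the $\ell^{p'}$-unit sphere (take $\zeta_{k}$ equal to $|a_{k}|^{p/p'}\overline{a_{k}}/|a_{k}|$ normalized, for those $k$ with $a_{k}\neq 0$). Because $\BB_{p'}^{N}$ is a \emph{complete} Reinhardt domain, the image $L_{N}(\BB_{p'}^{N})$ is rotation-symmetric in $\CC$ (the global rotation $\zeta\mapsto e^{i\theta}\zeta$ preserves $\BB_{p'}^{N}$ and sends $L_{N}$-image to its rotate). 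Being also open, convex, containing $0$, and with supremum modulus $\|a^{(N)}\|_{\ell^{p}}$, it must coincide with the open disc of that radius: $L_{N}(\BB_{p'}^{N}) = \{w\in\CC : |w|<\|a^{(N)}\|_{\ell^{p}}\}$.

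Now the hypothesis that $f = 1 + L_{N}$ does not vanish on $\BB_{p'}^{N}$ says precisely that $-1\notin L_{N}(\BB_{p'}^{N})$, i.e.\ $\|a^{(N)}\|_{\ell^{p}}\leq 1$. Letting $N\to\infty$ yields $(\sum_{k\geq 1}|a_{k}|^{p})^{1/p}\leq 1 = |a|$, as required. There is no serious obstacle here; the only point deserving care is the identification of $L_{N}(\BB_{p'}^{N})$ as the full open disc (rather than just a subset of it), which the Reinhardt/rotation argument handles cleanly. The lemma is, in essence, just the statement that the dual norm of $(a_{k})$ on the $\ell^{p'}$-ball equals $\|a\|_{\ell^{p}}$, packaged as a non-vanishing condition.
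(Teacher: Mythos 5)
Your proof is correct and follows essentially the same route as the paper's: both compute the supremum of the linear part $l(\zeta)=\sum_k a_k\zeta_k$ over $\BB_{p'}^N$ via H\"older duality, identify the range $l(\BB_{p'}^N)$ as the full open disc of radius $\big(\sum_{k=1}^N|a_k|^p\big)^{1/p}$, and conclude that nonvanishing of $a+l$ forces $|a|$ to be at least that radius, then let $N\to\infty$. You merely supply more detail (the rotation/convexity argument for why the image is the whole disc) where the paper says ``it is clear.''
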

 
\begin{proof}
It is clear that the upper bound of the linear form 
$$ 
l(\zeta) = \sum_{k\geq 1}a_k\zeta_k
$$ 
over the ball $\BB_{p'}^N=:\{\zeta=(\zeta_j)_{j\geq 1}\in\CC^N : 
\sum_{j\geq 1}|\zeta_j|^{p'}<1\}$ is 
$R=:\big(\sum_{k=1}^N|a_k|^p\big)^{1/p}$, and the range $l(\BB_{p'}^N)$ is 
the disc $\{z\in\CC:|z|<R\}$. Since $|f(\zeta)|=|a+l(\zeta)|>0$ on 
$\BB_{p'}^N$, we have $|a|\geq\big(\sum_{k=1}^N|a_k|^p\big)^{1/p}$ for any 
$N\geq 1$, and so the claimed inequality.
\end{proof}
 
\begin{Proposition}
(1) Let $X=\fD_s^p$ be as above, and let $f\in X_A\cap\ell^1(\ZZ_+(\infty))_A$ 
be a factorable function in the sense that $f=\prod_{j\geq 1}f_j$, $f_j$ 
being linear affine functions, and
$$ 
0<\prod_{j\geq 1}\|f_j\|_{\ell^1}<\infty.
$$ 
Then if $s\leq 1/p'$ and $f(\zeta)\neq 0$ for all 
$\zeta\in\BB_{p'}^\infty$, then $f$ is $\sigma(X,X')$-cyclic in $X_A$. 

(2) Let $X=\fD_{s,t}^p$ be as above, and either $p>1$ and $s>1/p'$, 
$t\geq 1/p'$, or $p=1$ and $s\geq 0$, $t\geq 0$. Then, $X_A=B\fD_{s,t}^p$ 
is a function algebra on its maximal ideal space $\DD^\infty_{X'}$ (for 
$t=1/p'$, $\DD^\infty_{X'}=\overline{\BB}_{p'}^\infty$), and so a function 
$f\in X_A$ is cyclic if and only if $f(\zeta)\neq 0$, 
$\forall\zeta\in\DD^\infty_{X'}$(for $t= 1/p'$, 
$\forall\zeta\in \overline{\BB}_{p'}^\infty$). 
\end{Proposition}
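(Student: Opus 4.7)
The plan for part (1) is to imitate Proposition 7.4(3), with the $\ell^p$--Drury--Arveson weights $w_\alpha=(\alpha!/|\alpha|!)^{1/p'}(|\alpha|+1)^s$ in place of the polydisc weights. First I reduce to the single-factor case exactly as in 7.4(3): the bound $\|M_{\zeta_k}\|=\|D_{e_k}\|\leq 2^s$ from Lemma 8.1 gives, for any linear affine $f_j=1+\sum_{k\geq 1}a_{j,k}\zeta_k$,
$$
\|f_j\|_{\Mult}\leq 1+2^s\|f_j-1\|_{\ell^1},
$$
and the convergence $\prod_{j\geq 1}(1+\|f_j-1\|_{\ell^1})<\infty$ ensures that every tail product $\prod_{j\geq n}f_j$ is a multiplier of $X_A$. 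By Lemma 4.6 it is then enough to prove that each individual $f_j$ is $\sigma(X,X')$-cyclic. Normalising $f_j(0)=1$ and writing $f_j=1-l$ with $l(\zeta)=\sum a_k\zeta_k$, Lemma 8.4 gives $\bigl(\sum |a_k|^p\bigr)^{1/p}\leq 1$.

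The key computation is the $X$-norm of $1/F_r$ where $F_r:=1-rl$, $0<r<1$. Expanding pointwise on $\BB_{p'}^\infty$,
$$
\frac{1}{F_r}=\sum_{k\geq 0}r^k l(\zeta)^k=\sum_{k\geq 0}r^k\sum_{|\alpha|=k}\frac{k!}{\alpha!}\,a^\alpha\zeta^\alpha,
$$
and a direct use of the multinomial theorem yields
$$
\|l^k\|^p_X=\sum_{|\alpha|=k}\Bigl|\tfrac{k!}{\alpha!}a^\alpha\Bigr|^p\Bigl(\tfrac{\alpha!}{k!}\Bigr)^{p/p'}(k+1)^{sp}=(k+1)^{sp}\sum_{|\alpha|=k}\tfrac{k!}{\alpha!}|a^\alpha|^p=(k+1)^{sp}\Bigl(\sum_k|a_k|^p\Bigr)^k,
$$
where the cancellation $p-p/p'=1$ (conjugate exponents) is the algebraic miracle that kills the $1/p'$-power in the Drury--Arveson weight. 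Bounding $\sum|a_k|^p\leq 1$ gives $\|1/F_r\|^p\leq \sum_{k\geq 0}r^{kp}(k+1)^{sp}=O\bigl((1-r)^{-(1+sp)}\bigr)$, hence $\|1/F_r\|_X=O\bigl((1-r)^{-1/p-s}\bigr)$. The hypothesis $s\leq 1/p'$ gives $1/p+s\leq 1$, so $\|1/F_r\|_X=O(1/(1-r))$.

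From here the conclusion is as in 7.4(3): $f_j/F_r\in E_{f_j}$, and
$$
\Bigl\|\tfrac{f_j}{F_r}\Bigr\|_X\leq 1+\|(F_r-f_j)\|_{\ell^1}\cdot\|1/F_r\|_X\leq 1+(1-r)\|l\|_{\ell^1}\cdot O(1/(1-r))=O(1),
$$
while $f_j/F_r\to 1$ coordinatewise as $r\to 1$. Lemma 2.1(2) (or the Lemma 2.1(1)/$\sigma(\ell^1,c_0)$ variant when $p=1$) yields $(\sigma)\lim_{r\to 1}f_j/F_r=1\in E_{f_j}$, proving cyclicity of each $f_j$ and hence of $f$.

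Part (2) is a clean abstract consequence of what has already been prepared. By Lemma 8.2, under the stated conditions on $s,t$ the weight $w_\alpha(s,t)$ satisfies the sufficient multiplicativity bound of Lemma 7.1, so $X_A=B\fD_{s,t}^p$ is a Banach algebra under pointwise multiplication, and by the second half of Lemma 7.1 its Gelfand space is exactly $\DD^\infty_{X'}$ (which, for $t=1/p'$, was identified in Lemma 8.1 as $\overline{\BB}_{p'}^\infty$). If $f\in X_A$ does not vanish on $\DD^\infty_{X'}$ then $1/f\in X_A$; since $X_{00}$ is norm-dense in $X$ (finite $\ell^p$-approximation on the weighted sequence side), polynomials $p_n$ exist with $p_n\to 1/f$ in $X_A$, whence $p_n f\to 1$ and $f$ is norm cyclic. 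The converse is Beurling's Lemma 6.1. The main obstacle in the whole proof is the intermediate norm identity for $\|l^k\|_X^p$, where the precise $1/p'$-exponent in the Drury--Arveson weight is exactly what is needed to collapse the multinomial sum cleanly; once this is observed, the rest is the by-now-familiar $F_r$-machinery of Sections 6 and 7.
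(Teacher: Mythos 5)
Your proof is correct and follows essentially the same route as the paper: the same reduction to a single affine factor via the multiplier bound $\|f_j\|_{\Mult}\leq 1+2^s\|f_j-1\|_{\ell^1}$ from Lemma 8.1, the same expansion of $1/F_r$ with the exponent cancellation $p-p/p'=1$ collapsing the Drury--Arveson weight, the same appeal to Lemma 8.4 and the $O(1/(1-r))$ bound, and the same Lemmas 8.1--8.2 plus Gelfand-theory argument for part (2), which the paper leaves as ``immediate.''
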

 
\begin{proof}
{\it (1)}
Let $1<p<\infty$ (modifications for $p=1$ or $p=\infty$ are standard). 
Here, the computations similar to these of 7.2, 7.4 are applied. Without 
loss of generality, $f_j(0)=1$ for every $j\geq 1$. Also, all $f_j$ and 
$f$ are multipliers of $X_A$: $\|f_j\|_\Mult\leq 1+\|l_j\|_\Mult
\leq 1+\sum_{j\geq 1}|a_j|\cdot\|D_{e_j}\|\leq 2^s\|f_j\|_1$ (see Lemma
8.1). So, to prove point (1) it suffices to show that every $f_j$ is 
cyclic.

Simplifying the notation, let $f_j= F(\zeta)=1-l(\zeta)$ and 
$F_r=1-rl(\zeta)$, $0<r<1$. As in Proposition 7.4(3), we show that 
$\|1/F_r\|_X= O\big(\frac{1}{1-r}\big)$ for $r\to1$, which implies the 
result. Indeed, we have pointwise $1/F_r=\sum_{k\geq 0}r^kl(\zeta)^k$ and  
\begin{align*} 
\frac{1}{F_r} &= \sum_{k\geq 0}r^kl(\zeta)^k 
= \sum_{k\geq 0}r^k \sum_{|\alpha|=k}a^\alpha\zeta^\alpha\frac{|\alpha|!}{\alpha!},
\\
\left\|\frac{1}{F_r}\right\|^p 
&= \sum_{k\geq 0}r^{kp} \left\|\sum_{|\alpha|=k}a^\alpha\zeta^\alpha
  \frac{|\alpha|!}{\alpha!}\right\|^p 
= \sum_{k\geq 0}r^{kp} \sum_{|\alpha|=k} |a^\alpha|^p 
  \left(\frac{|\alpha|!}{\alpha!}\right)^p w_\alpha^p .
\end{align*} 
Since $w_\alpha^p = \big(\frac{\alpha!}{|\alpha|!}\big)^{p/p'}(|\alpha|+1)^{sp}$, 
we have 
\begin{align*} 
\left\|\frac{1}{F_r}\right\|^p &= \sum_{k\geq 0}r^{kp}k^{sp} 
  \sum_{|\alpha|=k} |a^\alpha|^p \frac{|\alpha|!}{\alpha!} =
\\
&= \sum_{k\geq 0}r^{kp}k^{sp}\left(\sum_{k\geq 1}|a_k|^p\right)^k
\leq \sum_{k\geq 0}r^{kp}k^{sp} = O\left(\frac{1}{(1-r)^{1+sp}}\right),
\end{align*} 
and so $\|1/F_r\|_X=O\big(\frac{1}{(1-r)^{s+1/p}}\big)$ as $r\to1$. This 
proves the claim. 

{\it (2)}
It is immediate from Lemmas 8.1 and 8.2. 
\end{proof} 
 
\begin{Rem}
One can show that $\overline{\BB}_{1/t}\subset\DD^\infty_{X'}
\subset\overline{\BB}_{p'}$ for any $t\geq 1/p'$.
\end{Rem}
 
\section{Brief notes on the history of the periodic Dilation 
Completeness Problem (DCP).}
 
Initially, the Dilation Completeness Problem consists in a search of 
functions $f\in L^p_{\loc}[0,\infty)$ such that
$$ 
E_f =: \Span_{L^p(0,1)} \left( f(nx) \big| (0,1):n=1,2,3,\dots\right)
  = L^p(0,1) .
$$ 
The famous Riemann Hypothesis (RH) on zeros of the $\zeta$-function is 
closely related to the DCP for $p=2$ and 
$f(x)=\frac{1}{x}-\big[\frac{1}{x}\big]$, $x>0$: (RH) is equivalent to the 
inclusion $1\in E_f$, and/or to the equality 
$\Span_{L^2(0,1)}(f(sx)|(0,1):s\geq 1)=L^2(0,1)$ (see 
\cite{Nym1950,Bae2003}, or Chapter 6 of \cite{Nik2019}). 

According to A.~Wintner \cite{Win1944} and A.~Beurling \cite{Beu1945}, the 
following partial case of the DCP ($2$-periodic DCP for $p=2$) is also 
related to some number theoretic questions (in Diophantine analysis): to 
determine \emph{odd $2$-periodic functions $f\in L^2_{odd}(-1,1)$} on 
$\RR$ such that $E_f=L^2(0,1)$. Since the functions 
$e_k=\frac{\sin(\pi kx)}{\sqrt{2}}$, $k=1,2,\dots$ form an orthonormal 
basis in $L^2_{\odd}(-1,1)$ and the dilations $f\mapsto f(nx)$ act as 
$e_k\mapsto e_{nk}$ ($n,k=1,2,\dots$), one can unitarily change the basis 
$(e_k)$ for $(z^k)$ ($k= 1,2,\dots$) on the Hardy space $H^2_0(\DD)$ on 
the unit disc $ \DD=\{z\in\CC:|z|<1\}$, 
$$ 
H^2_0(\DD) = \left\{ f=\sum_{k\geq 1}\hat{f}(k)z^k : 
  \|f\|^2=:\sum_{k\geq 1}|\hat{f}(k)|^2<\infty\right\},
$$ 
and get {\it the following equivalent form of the periodic DCP: 
to describe $f\in H^2_0(\DD)$ such that the dilations $T_nf$ 
$$ 
T_nf=f(z^n)
$$ 
generate the whole space,}
$$ 
E_f =: \Span_{H^2_0} (T_nf:n=1,2,\dots) = H^2_0;
$$ 
such a function $f$ is called \emph{``dilation cyclic''}. As we saw above, 
the problems of cyclic (polynomially cyclic, or weakly invertible) 
functions in one or several complex variables are often partial cases of 
the DCP in sequence spaces on $\NN$. These partial cases are treated in a 
lot of papers and books; quoting just a few, 
\cite{Rud1969,Rud1980,Nik1986,Zhu2005,EKMR2014} Ch.~9. The most facts and 
references known on the periodic DCP are gathered in 
\cite{HLS1997,HLS1999,Mit2017,Nik2012,Nik2018,Nik2018b,Nik2019,DGu2020,DGu2021} 
(and other Dan--Guo arXiv papers), \cite{DGN2021}, as well as in 
references of Section 6.10 above. B.~Mityagin pointed out (see for example 
\cite{Mit2017}) to an applied interest of the dilation cyclicity of 
binomials $f=z(\lambda-z)^N$ ($\lambda\in\CC$, $N= 1,2,\dots$) related to 
the Cohn--Lehmer--Schur algorithm (localization of zeros of polynomials). 
This special choice of $ f$ was already partially treated in 
\cite{Nik2019} and \cite{Nik2017}.

\end{document}